\documentclass[a4paper,11pt]{amsart}
\usepackage[pagewise]{lineno}
\usepackage{graphicx} 
\usepackage{caption}
\usepackage{subcaption}
\usepackage{tikz-cd}
\usetikzlibrary{shapes} 
\usepackage{pgfplots}
\pgfplotsset{compat=1.15}
\usepackage{mathrsfs}
\usepackage{amsmath}
\usepackage{amsthm}
\usepackage{amsfonts}
\usepackage{amssymb}
\usepackage{mathtools}

\usepackage{thmtools}
\usepackage{thm-restate}

\usepackage[hidelinks]{hyperref}
\hypersetup{ 
  colorlinks   = true,            
  urlcolor     = {blue!50!black}, 
  linkcolor    = {blue!50!black}, 
  citecolor   = {red!50!black}    
}

\newtheorem{theorem}{Theorem}[section]{\bf}{\it}
\newtheorem{lemma}[theorem]{Lemma}{\bf}{\it}
\newtheorem{proposition}[theorem]{Proposition}{\bf}
\newtheorem*{informalthm}{Informal version of main theorem}{\bf}
{\it}
\newtheorem{corollary}[theorem]{Corollary}{\bf}{\it}
{\bf}{\it} 
{\bf}{\it}
\newtheorem*{theorem*}{Theorem}

\newtheorem{remark}[theorem]{Remark}
\newtheorem*{remark*}{Remark}
{\bf}{\it}
{\bf}{\it}

\newtheorem{definition}[theorem]{Definition}
\newtheorem*{definition*}{Definition}

{\bf}{\it}
{\bf}{\it}
{\bf}{\it}

\newtheorem*{example*}{Example}

\theoremstyle{remark}

\theoremstyle{definition}

\theoremstyle{remark}

\usepackage{xcolor}
\definecolor{cbBlue}{HTML}{332288}
\definecolor{cbOrange}{HTML}{882255}
\definecolor{cbPurple}{HTML}{117733}
\definecolor{cbGreen}{HTML}{999933}

\usepackage{enumitem}

\newcommand{\Rips}{\mathrm{Rips}}
\newcommand{\im}{\mathrm{im}}

\newcommand{\dgm}{\mathrm{dgm}}
\newcommand{\Dgm}{\mathrm{Dgm}}
\newcommand{\mdgm}{\mathsf{dgm}}

\newcommand{\R}{\mathbb{R}}
\newcommand{\D}{\mathbb{D}}

\newcommand{\Z}{\mathbb{Z}}
\newcommand{\norm}[1]{\left \lVert #1 \right \rVert}

\newcommand{\spt}{\mathrm{spt}}
\newcommand{\dist}{\mathrm{dist}}
\newcommand{\Haus}{\mathrm{Haus}}
\newcommand{\N}{\mathbb{N}}

\newcommand{\tildeF}{\Tilde{f}_*}
\newcommand{\tildeGf}{\widetilde{(g_f)}_*}
\newcommand{\iX}{\mathsf{i}_X}
\newcommand{\iY}{\mathsf{i}_Y}
\DeclareMathOperator{\rank}{rank}
\DeclareMathOperator{\id}{id}

\usepackage{amsaddr} 

\title[Unveiling Topology in Imaging Problems]{Unveiling Topology in Imaging Problems via Quasi-Isometry and Persistent Homology}

\author{Elli Karvonen$^{*,1}$, Matti Lassas$^{1}$, Pekka Pankka$^{1}$ }
\address{$^{1}$ Department of mathematics and statistics, University of Helsinki, Finland.\\
\textit{E-mail:} \nolinkurl{elli.karvonen@helsinki.fi}, \nolinkurl{matti.lassas@helsinki.fi},\nolinkurl{pekka.pankka@helsinki.fi}}
\thanks{* Corresponding author}
\thanks{Keywords: inverse problem, persistent homology, quasi-isometry}
\thanks{MSC2020: 55N31,65R32, 51F30  }

\begin{document}

\maketitle

\section*{Abstract}
We show that the topological structures, such as loops, voids, and higher-dimensional holes
of unknown objects (of flow of an object in space-time) can be recovered from noisy and indirect measurements.
More precisely, we describe how the part of the persistent homology of a space can
be determined from a noise-prone and discretized model space when there is a quasi-isometry between the original space and the space modeling indirect measurements. The result not only guarantees the existence of the structures but also provides size bounds for them. The structure is studied using persistent homology, and the results assume the existence of a quasi-isometry between a model space and the noisy measurements. We explore imaging problems, particularly X-ray imaging and EIT, that are well-suited to this framework.

\section{Introduction}\label{sec:intro}

In inverse problems, the typical task is to determine functions describing an unknown object,
e.g. the density function, from noisy indirect measurements. 
For example, we may consider X-ray images or electrical boundary measurements of the object, and then the problem is to reconstruct the interior structure of the object from these measurements. 
In this paper, we approach inverse problems from a different perspective. Rather than directly solving functions
modeling the materials' parameters, we are determining the topological properties.
We also consider noisy time-dependent observations of a moving object,  like a beating heart or moving water in a piping system. The question is whether we can be sure there is a loop or higher-dimensional structure in the model space if we observe the same-dimensional structure in noisy measurements.
It turns out that under certain limitations and error bounds, the structure of the unknown model space can be observed from incomplete and noisy measurements. Even though the results are presented in the context of inverse problems, we believe that one can find this kind of structural stability result interesting in other application areas also, for example, in dimension reduction.

The observation is achieved using persistent homology with Rips complexes and the $\Z_2$ coefficient field. Persistent homology is a computational approach that detects topological structures, such as components, loops, voids, and higher-dimensional holes. 
Additionally, persistent homology tracks on which scales $r$ the features appear and disappear when one covers a set $X$ by balls of radius $r$, or equivalently, one considers an $r$-neighborhood of the set $X$. One can think that looking at a data set very closely (small scale) allows one to see details, like distinct data points. Looking at the same data from far away (big scale), the details start to merge, and only large structures can be detected. These structural changes of space $X$ are collected to an indexed persistent diagram, $\Dgm_p(X)$, where $p$ denotes the dimension of the homology in question. The indexed persistent diagrams are sets of elements $(k,(a,b))$, where $k$ is the index and $a>0$ presents the scale $r$ when the structure appears (birth time) and $b>0$ is its disappearing scale (death time). Illustratively, the indexed persistence diagram is presented as a simplified persistence diagram, or so to speak, a ``birth-death'' diagram, a subset of $\R^2$ as in Figure \ref{fig:simple_example}. Essentially, a simplified persistence diagram is an indexed persistence diagram without the indices. 
The formal definition of these persistence diagrams is given in Section \ref{sec:per_dgm_stab}, Definition \ref{def:persistence_diagrams}. 

We consider the inverse problem of computing the persistent homology of a metric space $X_{cont.}$ 
(e.g. a manifold)
from the noisy data space $Y_{noisy}$, which is, for example, a finite subset of a Euclidean space. We assume that the model and data are given by mappings
\[
X_{cont.}\to X_{net}\to Y_{clean}\to Y_{noisy}
\]
where $X_{net}$ is the discrete sampling from $X_{cont.}$ or more precisely finite $\frac{1}{2}\varepsilon_0$-net of $X_{cont.}$, and $Y_{clean}$ is noise-free measurements from $X_{net}$. Also, $Y_{noisy}$ can be considered as a discrete, noisy version of $Y_{clean}$.
The mappings between spaces are quasi-isometries (see Section \ref{sec:quasi}) with parameters $(1,\varepsilon_0,0)$, $(L,\varepsilon_1,0)$, and $(1,\varepsilon_2,0)$, respectively, and we call this model an \emph{$(L,\varepsilon_0,\varepsilon_1, \varepsilon_2)$-model}. 

One simple example of the concerned model is presented in Figure \ref{fig:simple_example}, where the model space is a unit circle, $X_{cont.}=S^1\subset \R^2$, $X_{net}$ is the uniform finite sampling of $X_{cont.}$, $Y_{clean}$ is exactly same as $X_{net}$ and the $Y_{noisy}$ is shifted circle points. This corresponds to the simple model where the measurements or the direct map is the identity map, that is, measurements are noise samples from the set $X_{cont.}$.
 The simplified persistence diagrams for each space in the model are also presented. 
More sophisticated examples of inverse problems that fit into this modeling framework are the inverse problems for the Radon transform and for conductivity; see Section \ref{sec:model_inverse_prob} and Section \ref{sec:quasi}. In these cases, the $X_{cont.}$ would be the objects we might measure. Let's say the space of annuli with different inner and outer radii, like in the upcoming computational example  (Subsection \ref{subsec:comp_radon}). The space $X_{net}$ is then some subset of $X_{cont.}$, those elements that one is actually measuring with some device. Ideally, it is a dense sampling of $X_{cont.}$. The space $Y_{clean}$ contains the ideal noise-free measurements. The space $Y_{noisy}$ is the noisy measurements that one observes. 

\begin{figure}[!htb]
    \centering
\begin{tikzpicture}[scale=0.9]
\node[] at (0,1.5) {$X_{cont.}$};
\draw (0,0) circle (1);
\draw[->, thick] (1.5,0) -- (2.5,0);
\draw[->, thick] (5.5,0) -- (6.5,0);
\draw[->, thick] (9.5,0) -- (10.5,0);
\begin{scope}[shift={(4,0)}]
   \node[] at (0,1.5) {$X_{net}$};
  \pgfplotstableread[col sep=space]{datafiles/points.dat}\datatable
  \foreach \i in {0,...,49} {
    \pgfplotstablegetelem{\i}{[index]0}\of\datatable
    \let\x\pgfplotsretval
    \pgfplotstablegetelem{\i}{[index]1}\of\datatable
    \let\y\pgfplotsretval
    \fill (\x,\y) circle (0.03);
  }
\end{scope}
\begin{scope}[shift={(8,0)}]
   \node[] at (0,1.5) {$Y_{clean}$};
  \pgfplotstableread[col sep=space]{datafiles/points.dat}\datatable
  \foreach \i in {0,...,49} {
    \pgfplotstablegetelem{\i}{[index]0}\of\datatable
    \let\x\pgfplotsretval
    \pgfplotstablegetelem{\i}{[index]1}\of\datatable
    \let\y\pgfplotsretval
    \fill (\x,\y) circle (0.03);
  }
\end{scope}
\begin{scope}[shift={(12,0)}]
   \node[] at (0,1.5) {$Y_{noisy}$};
  \pgfplotstableread[col sep=space]{datafiles/points_rand.dat}\datatable
  \foreach \i in {0,...,49} {
    \pgfplotstablegetelem{\i}{[index]0}\of\datatable
    \let\x\pgfplotsretval
    \pgfplotstablegetelem{\i}{[index]1}\of\datatable
    \let\y\pgfplotsretval
    \fill (\x,\y) circle (0.03);
  }
\end{scope}

\begin{scope}[scale=2.5,shift={(1,-3)}]
     \node[below] at (0.5,-0.15) {\small{birth}}; 
     \node[left,rotate=90] at (-0.3,1.25) {\small{death}}; 
    \draw[step=0.25, gray!30, very thin] (0,0) grid (1,2);
    \foreach \x in {0,0.5,1}
    \draw (\x,0) -- (\x,-0.02) node[below] {\tiny{\x}};
        \foreach \y in {0,0.5,1,1.5,2}
        \draw (0,\y) -- (-0.02,\y) node[left] {\tiny{\y}};
    \draw[->](0,0)--(0,2.1); 
    \draw[->](0,0)--(1.1,0); 
    \draw[-](0,0)--(1,1); 
  \pgfplotstableread[col sep=space]{datafiles/dgm_rand.dat}\datatable
  \foreach \i in {0,...,3} {
    \pgfplotstablegetelem{\i}{[index]0}\of\datatable
    \let\x\pgfplotsretval
    \pgfplotstablegetelem{\i}{[index]1}\of\datatable
    \let\y\pgfplotsretval
        \node[draw, fill=red, minimum size=1mm, inner sep=0pt, shape=rectangle] at (\x,\y){};
  }
  \pgfplotstableread[col sep=space]{datafiles/dgm.dat}\datatable
  \foreach \i in {0} {
    \pgfplotstablegetelem{\i}{[index]0}\of\datatable
    \let\x\pgfplotsretval
    \pgfplotstablegetelem{\i}{[index]1}\of\datatable
    \let\y\pgfplotsretval
        \node[draw, fill=blue, minimum size=1.2mm, inner sep=0pt, shape=circle] at (\x,\y){};
  }
\node[draw, fill=green, minimum size=1.5mm, inner sep=0pt, shape=diamond] at (0,1.73205081){};
\node[draw, fill=red, minimum size=1mm, inner sep=0pt, shape=rectangle] at (1.2,1){};
\node[right] at (1.2,1) {\small{$\dgm_1(Y_{noisy})$}}; %
\node[draw, fill=blue, minimum size=1.2mm, inner sep=0pt, shape=circle] at (1.2,0.8){};
\node[right] at (1.2,0.8) {\small{$\dgm_1(Y_{clean})=\dgm_1(X_{net})$}}; %
\node[draw, fill=green, minimum size=1.5mm, inner sep=0pt, shape=diamond] at (1.2,0.6){};
\node[right] at (1.2,0.6) {\small{$\dgm_1(X_{clean})$}}; %
\end{scope}
\end{tikzpicture}
\caption{A simplified example when the measurements are obtained using the identity map. {\bf Top.} The (continuous) model space is $X_{cont.}=S^1\subset\R^2$, the sampled space is $X_{net}=\{y_j\mid y_j=(\cos\theta_j,\sin\theta_j),\theta_j=\frac{2\pi j}{50},$ $j=0,\dots,49\}$, the clean measurements are taken by identity map, i.e., $Y_{clean}=X_{net}$, and a bounded random noise is added to the measurements, $Y_{noisy}= \{y_j+e_j\mid y_j\in Y_{clean}, \norm{e_j}_2\leq 0.2, j=0,\dots,49\}$. The $X_{cont.}\to X_{net}\to Y_{clean}\to Y_{noisy}$ is $(1,0.126,0,0.4)$-model. 
{\bf Bottom.}
The topological structures of the above spaces are studied by persistence homology of dimension one. The structures for each of these spaces are presented as a simplified persistence diagram $\dgm_1(\cdot)$. The indexed persistence diagram, as well as the simplified persistence diagram, reveals that $X_{cont.}$ has one one-dimensional hole existing from the scale 0 to the scale $1.73$, see the green point in the birth-death diagram. The space $X_{net}$ (as well as $Y_{clean}$) has a hole existing from scale $0.13$ to  $1.75$ (blue point). The space $Y_{noisy}$ has four holes (red points), three existing for a very short time (diagram points near the diagonal), and one hole that exists longer, from the scale $0.33$ to the scale $1.51$.}
\label{fig:simple_example}
\end{figure}
In this paper, we explore the structural observational stability of this model in terms of persistent homology. Heuristically, we show that the topological properties (in terms of homology) which are observable from the data $Y_{noisy}$ depend stably (in terms of the parameters) on the corresponding topological properties of the space $X_{cont.}$. We emphasize that noisy data $Y_{noisy}$ is obtained from indirect measurements for which the inverse problem of determining the original set $X_{cont.}$ is an ill-posed problem. 

\begin{informalthm}
Let $(a,b)\in \R^2$ be a point in the persistence diagram of the 
space $Y_{noisy}$ of finite and noisy indirect measurements and assume that the noise level is given.
When the point  $(a,b)$ is above a given threshold level, we can determine 
a  rectangle $R\subset \R^2$ that has to contain a point $(c,d)\in R$
in the persistence diagram of the 
the measured object  $X_{cont.}$. That is, an observed 
topological structure in the set of noisy and discrete indirect measurements that is above a threshold level
implies the existence of a similar topological structure in the object from which the measurements
are taken.
\end{informalthm}

In the statement, we denote $\mathbb H_p(X_{cont.})$ the $p${th} persistence module of the space $X_{cont.}$ (see Subsection \ref{sec:persistence_modules} for definition). The assumption that $\mathbb{H}_p(X_{cont.})$ is decomposable guarantees that the indexed persistence diagram of $X_{cont.}$ is well defined (see Section \ref{sec:per_dgm_stab}). 
Rigorously, our main result on the stability of indexed persistence diagrams reads as follows.

\begin{restatable}{theorem}{MainTheorem}
\label{main-thm-intro}
    Suppose that $X_{cont.}\to X_{net}\to Y_{clean} \to Y_{noisy}$ is an $(L,\varepsilon_0,\varepsilon_1,\varepsilon_2)$-model and $\mathbb{H}_p(X_{cont.})$ is decomposable. Let $\vartheta(t)=L(t+\varepsilon_2+\varepsilon_1)+\varepsilon_0$, and $\varphi^{-1}(t)=({t-L\varepsilon_0-\varepsilon_1-\varepsilon_2})/{L}$. Denote 
    $$\Dgm^{(L,\varepsilon_0,\varepsilon_1,\varepsilon_2)}_p(Y_{noisy}):=\{(k,(a,b))\in \Dgm_p(Y_{noisy})\colon \vartheta(a)<\varphi^{-1}(b)\}.$$
     Then there exists an injection
    \[
    \boxdot\colon \Dgm^{(L,\varepsilon_0,\varepsilon_1,\varepsilon_2)}_p(Y_{noisy}) \to \Dgm_p(X_{cont.})
    \] such that for every $(k,(a,b))\in \Dgm^{(L,\varepsilon_0,\varepsilon_1,\varepsilon_2)}_p(Y_{noisy})$,  $ \boxdot\big((k,(a,b))\big)=(l,(c,d))\in \Dgm_p(X_{cont.})$, it holds that 
    \[
(c,d) \in [\varphi^{-1}(a),  \vartheta(a)  ]\times [\varphi^{-1}(b),  \vartheta(b)].
    \]
\end{restatable}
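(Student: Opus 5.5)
I will reduce everything to a single quasi-isometry between $X_{cont.}$ and $Y_{noisy}$, turn it into a pair of interleaving-type maps between the Rips persistence modules, and then extract the injection from a rank-based matching argument. The steps are in order.

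\textbf{Step 1: compose the model.} I first compose the three maps $X_{cont.}\to X_{net}\to Y_{clean}\to Y_{noisy}$. A composition of quasi-isometries is again a quasi-isometry, so $f\colon X_{cont.}\to Y_{noisy}$ has controlled distortion
\[
d(x,x')-\text{(error)} \le d(f(x),f(x')) \le L\,d(x,x')+L\varepsilon_0+\varepsilon_1+\varepsilon_2 .
\]
Choosing a quasi-inverse $g\colon Y_{noisy}\to X_{cont.}$ (nearest-point selection through the nets) gives distortion $d(g(y),g(y'))\le L(d(y,y')+\varepsilon_1+\varepsilon_2)+\varepsilon_0$. The two compositions $g\circ f$ and $f\circ g$ move points a bounded amount. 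These affine functions are exactly $\varphi(t)=Lt+L\varepsilon_0+\varepsilon_1+\varepsilon_2$ and $\vartheta$; the bookkeeping must be arranged so that these are the functions that appear. I would check carefully which constants enter where, and adjust the choice of quasi-inverse if needed.

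\textbf{Step 2: simplicial maps and module maps.} A map with $d(h(x),h(x'))\le \psi(d(x,x'))$ induces simplicial maps $\Rips_r\to\Rips_{\psi(r)}$. These give persistence module morphisms
\[
F\colon \mathbb H_p(X_{cont.})_r\to \mathbb H_p(Y_{noisy})_{\varphi(r)},\qquad
G\colon \mathbb H_p(Y_{noisy})_r\to \mathbb H_p(X_{cont.})_{\vartheta(r)}.
\]
The composites $G\circ F$ and $F\circ G$ should agree with the structure maps of the respective modules. The reason is that the simplicial maps $g\circ f$ and the inclusion are contiguous in the target Rips complex, since points move by at most the bounded error. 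This yields a generalized, non-additive interleaving by the monotone functions $\varphi$ and $\vartheta$, with $\varphi^{-1}$ as the inverse shift.

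\textbf{Step 3: rank inequalities.} From the interleaving I derive, for $s\le t$,
\[
\rank\big(\mathbb H_p(Y)_{s}\to \mathbb H_p(Y)_{t}\big)\le \rank\big(\mathbb H_p(X)_{\varphi^{-1}(s)}\to \mathbb H_p(X)_{\vartheta(t)}\big)\quad\text{roughly.}
\]
Translated into counts of bars, this says that bars of $Y$ born before $a$ and dying after $b$ are dominated by bars of $X$ covering $[\vartheta(a),\varphi^{-1}(b)]$. The condition $\vartheta(a)<\varphi^{-1}(b)$ is exactly what makes this interval nonempty.

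\textbf{Step 4: build the injection.} I then construct the injection $\boxdot$ by a Hall-type matching argument, in the style of the induced-matching proof of the algebraic stability theorem (Bauer--Lesnick). I match bars of $Y$ with birth $a$ and death $b$ to bars of $X$ whose endpoints lie in $[\varphi^{-1}(a),\vartheta(a)]\times[\varphi^{-1}(b),\vartheta(b)]$. A cleaner route is to reparametrize the $X$-module by a monotone change of variables so that the interleaving becomes an ordinary $\delta$-interleaving, then apply the induced matching theorem and translate back. Decomposability of $\mathbb H_p(X_{cont.})$ is used to make sense of the bars of $X$. The module of $Y_{noisy}$ is pointwise finite-dimensional because $Y_{noisy}$ is finite.

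\textbf{Main obstacle.} The main obstacle is this last step, for two reasons. First, $\varphi$ and $\vartheta$ are different functions, not inverses up to a shift, so a single reparametrization does not symmetrize the interleaving. Second, the $X$-module may have infinitely many bars and need not be q-tame. My first attempt will be the induced matching construction done directly: sort bars of $Y$ by death (and of $X$ by birth), define the matching through the images of the morphisms $F$ and $G$, and verify the endpoint bounds one at a time using $G\circ F$ and $F\circ G$ as structure maps. The hypothesis $\vartheta(a)<\varphi^{-1}(b)$ guarantees that no long bar of $Y$ is left unmatched.
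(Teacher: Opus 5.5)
Your Steps 1 and 2 are sound, and they produce exactly the theorem's constants. Composing the three stagewise quasi-inverses yields $\vartheta$; taking a quasi-inverse of the composite via Proposition~\ref{prop:quasi-inverse} would instead give the worse shift $Lt+L(L\varepsilon_0+\varepsilon_1+\varepsilon_2)$. Since every $\delta$ vanishes, $f\circ g=\id_{Y_{noisy}}$ and $g(f(x))\in f^{-1}(f(x))$. Lemma~\ref{lemma:QI-radius} then gives $d(x,g(f(x)))\le L(L\varepsilon_0+\varepsilon_1+\varepsilon_2)\le\vartheta(\varphi(r))-r$, so your contiguity argument does yield $G\circ F=\phi_X^{\vartheta(\varphi(r)),r}$ and $F\circ G=\phi_Y^{\varphi(\vartheta(r)),r}$.

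The genuine gap is Step~4, which is the entire content of the theorem. No injection is actually constructed, and both obstacles you name are left open. Step~3 cannot fill this. Its display is misoriented: as written it bounds the rank over $[s,t]$ by the rank over the longer interval $[\varphi^{-1}(s),\vartheta(t)]$, which is false in general. The correct inequality is $\rank\phi_Y^{t,s}\le\rank\phi_X^{\varphi^{-1}(t),\vartheta(s)}$ for $\vartheta(s)\le\varphi^{-1}(t)$, from the factorization $\phi_Y^{t,s}=F\circ\phi_X^{\varphi^{-1}(t),\vartheta(s)}\circ G$. But such counts only give the one-sided bounds $c\le\vartheta(a)$ and $d\ge\varphi^{-1}(b)$. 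Turning counts into an injection requires Hall's condition over unions of overlapping boxes, which individual rank inequalities do not provide.

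The missing idea is that you do not need to symmetrize the interleaving; reindex only the $Y$-side. Put $W_r:=H_p(\Rips(Y_{noisy},\varphi(r)))$ with structure maps $\phi_Y^{\varphi(r'),\varphi(r)}$. Then $W$ is pointwise finite-dimensional because $Y_{noisy}$ is finite. Since $\varphi$ is an increasing bijection, its bars are the $\varphi^{-1}$-images of those of $\mathbb{H}_p(Y_{noisy})$. Now $F$ becomes an unshifted morphism $\mathbb{H}_p(X_{cont.})\to W$ whose kernel and cokernel are $\lambda$-trivial for $\lambda=\vartheta\circ\varphi$:
if $F_rc=0$ then $\phi_X^{\lambda(r),r}c=G_{\varphi(r)}F_rc=0$, and the structure map of $W$ from $r$ to $\lambda(r)$ equals $F_{\lambda(r)}\circ G_{\varphi(r)}$.

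The Bauer--Lesnick induced matching through $\im F$ uses the shift only via its monotonicity. So it matches every bar $\langle b',d'\rangle$ of $W$ with $\lambda(b')<d'$ to a bar $\langle c,d\rangle$ of $\mathbb{H}_p(X_{cont.})$ with $b'\le c\le\lambda(b')$ and $d'\le d\le\lambda(d')$. Setting $b'=\varphi^{-1}(a)$ and $d'=\varphi^{-1}(b)$ gives exactly the threshold $\vartheta(a)<\varphi^{-1}(b)$ and the box $[\varphi^{-1}(a),\vartheta(a)]\times[\varphi^{-1}(b),\vartheta(b)]$.

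Your second obstacle still requires an argument. $\im F$ is pointwise finite-dimensional as a submodule of $W$, but the matching for the epimorphism $\mathbb{H}_p(X_{cont.})\twoheadrightarrow\im F$ must be built for a source that is merely interval-decomposable. There the usual duality proof is unavailable, and you must count directly with the interval decomposition.

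For comparison, the paper never composes the model into one map. It chains a single-stage injection (Corollary~\ref{cor:persdgm_Y_to_X}) three times. That injection comes from element-level support bounds (Theorem~\ref{thm:sup-inf-interval2}) and basis-element selection (Propositions~\ref{lemma:existence_basiselement} and~\ref{lemma:two_different_elements}). Your one-shot route, once completed, would avoid the stage-by-stage threshold check. It would also treat all bars with overlapping boxes at once, whereas Proposition~\ref{lemma:two_different_elements} handles them two at a time.
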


In the theorem, each point $(k,(a,b))\in\Dgm_p(Y_{noisy})$  corresponds to a structure in the finite space $Y_{noisy}$, e.g., a loop in $p=1$, which exists from the positive scale $a$ to the scale $b$, $b>a$. If the pair $(a,b)$ happen to exist long enough, more precisely $\vartheta(a)<\varphi^{-1}(b)$, then we get \emph{an estimation box} $[\varphi^{-1}(a),\vartheta(a)]\times[\varphi^{-1}(b),\vartheta(b)]$. The result states that, in each box, we have a diagram point of $X_{cont.}$, meaning that there is the same-dimensional structure in the space $X_{cont.}$. Furthermore, the box sets limits on the existence of the found structure. Subfigure \ref{fig:estimation_simple} is an illustration of this result applied to the simple example model presented in Figure \ref{fig:simple_example}. The Subfigure \ref{fig:estimation_complex} illustrates the situation where model parameter $L\neq1$ causes the estimation boxes to be rectangles of different sizes.   

\begin{figure}[tbh!]
\subcaptionbox{ $(1,0.126,0,0.4)$--model\label{fig:estimation_simple}}{%
\begin{tikzpicture}[scale=3.5]
    \draw[color=black, fill=gray, opacity=0.5] (0, 1.5118000507354736-0.526) rectangle (0.32610002160072327+0.526, 1.5118000507354736+0.526); 
    \node[below] at (0.5,-0.1) {\small{birth}}; 
    \node[left,rotate=90] at (-0.2,1.25) {\small{death}}; 
    \draw[step=0.25, gray!30, very thin] (0,0) grid (1,2); 
    \foreach \x in {0,0.5,1}
    \draw (\x,0) -- (\x,-0.02) node[below] {\tiny{\x}};
    \foreach \y in {0,0.5,1,1.5,2}
    \draw (0,\y) -- (-0.02,\y) node[left] {\tiny{\y}};
    \draw[->](0,0)--(0,2.1); 
    \draw[->](0,0)--(1.1,0); 
    \draw[-](0,0)--(1,1); 
    \draw[dashed,thick,color=black](0,0.526)--(1,1.526); 
    \pgfplotstableread[col sep=space]{datafiles/dgm_rand.dat}\datatable 
    \foreach \i in {0,...,3} {
        \pgfplotstablegetelem{\i}{[index]0}\of\datatable
        \let\x\pgfplotsretval
        \pgfplotstablegetelem{\i}{[index]1}\of\datatable
        \let\y\pgfplotsretval
        \node[draw, fill=red, minimum size=1mm, inner sep=0pt, shape=rectangle] at (\x,\y){}; 
    }
    \node[draw, fill=green, minimum size=1.5mm, inner sep=0pt, shape=diamond] at (0,1.73205081){};
    \node[draw, fill=red, minimum size=1mm, inner sep=0pt, shape=rectangle] at (0.7,0.4){};
    \node[right] at (0.7,0.4) {\tiny{Known $\dgm_1(Y_{noisy})$}}; %
    \node[draw, fill=green, minimum size=1.5mm, inner sep=0pt, shape=diamond] at (0.7,0.3){};
    \node[right] at (0.7,0.3) {\tiny{True $\dgm_1(X_{cont.})$}}; %
    \node[draw, fill=gray, minimum size=1.5mm, inner sep=0pt, shape=rectangle,opacity=0.5] at (0.7,0.2){};
    \node[right] at (0.7,0.2) {\tiny{Estimation box}}; %
    \draw[dashed,thick,color=black](0.65,0.1)--(0.73,0.1);;
    \node[right] at (0.7,0.1) {\tiny{Threshold}}; 
    \end{tikzpicture}
    }
%
\subcaptionbox{$(1.5,0.1,0.01,0.12)$-model\label{fig:estimation_complex}}{%
\begin{tikzpicture}[scale=0.3]
    \draw[color=black, fill=gray, opacity=0.5] (3.1467,9.8133) rectangle (7.7950,22.7950); 
    \draw[color=black, fill=gray, opacity=0.5] (0.58,3.1467) rectangle (1.7950,7.7950); 
    \draw[step=5cm, gray!30, very thin] (0,0) grid (10,26); 
    \draw[->] (0,0) -- (10.5,0); 
    \draw[->] (0,0) -- (0,26.5); 
    \node[below] at (5,-1) {\small{birth}}; 
    \node[left,rotate=90] at (-2,14) {\small{death}}; 
    \foreach \x in {0,5,10}
        \draw (\x,0) -- (\x,-0.2) node[below] {\tiny{\x}};
    \foreach \y in {0,5,10,15,20,25}
        \draw (0,\y) -- (-0.2,\y) node[left] {\tiny{\y}};
    \draw[-](0,0)--(10,10); 
    \draw[dashed,thick,color=black](0,0.7225)--(10,23.2225); 
    \node[draw, fill=red, minimum size=1mm, inner sep=0pt, shape=rectangle] at (5,15) {};
    \node[draw, fill=red, minimum size=1mm, inner sep=0pt, shape=rectangle] at (1,5) {};
    \node[draw, fill=red, minimum size=1mm, inner sep=0pt, shape=rectangle] at (3.1,4.6) {};
    \draw (7.4,22) node[cross out, draw=black, minimum size=3pt, inner sep=0pt, thick] {};
    \draw (0.67,3.4) node[cross out, draw=black, minimum size=3pt, inner sep=0pt, thick]{}; 
    \node[draw, fill=red, minimum size=1mm, inner sep=0pt, shape=rectangle] at (6,5){};
    \node[right] at (6,5) {\tiny{Known $\dgm_p(Y_{noisy})$}}; %
    \draw (6,4) node[cross out, draw=black, minimum size=3pt, inner sep=0pt, thick] {};  \node[right] at (6,4) {\tiny{Possible $\dgm_p({X_{cont.}})$}}; %
    \node[draw, fill=gray, minimum size=1.5mm, inner sep=0pt, shape=rectangle,opacity=0.5] at (6,3){};
    \node[right] at (6,3) {\tiny{Estimation box}}; %
    \draw[dashed,thick,color=black] (5.3,2)--(6.4,2) {};
    \node[right] at (6,2) {\tiny{Threshold}};
    \end{tikzpicture}
    }
\caption{Illustration of main Theorem \ref{main-thm-intro} using two models. The red markers denote the known indexed persistence diagram points $\Dgm_p(Y_{noisy})$ in the simplified persistence diagram $\dgm_p(Y_{noisy})$ in both models. Above dashed line (threshold) are the points $(k,(a,b))\in \Dgm_p(Y_{noisy})$ for which $\vartheta(a)<\varphi^{-1}(b)$. The pairs $(a,b)$ define gray estimation boxes. Inside these boxes must be a point of the indexed persistence diagram $\Dgm_p(X_{cont.})$. Possible indexed persistence diagram points of $\Dgm_p(X_{cont.})$ are marked with crosses in the simplified persistence diagrams.} 
\end{figure}

The stability in terms of persistent homology has also been studied previously under different assumptions \cite{pershom_classical_stability,cohen-steiner_lipschitz_2010, chazal:geomcomp, ellipsoid_stability_preprint_Kalisnik}, but to our knowledge, never under quasi-isometry. 
 Results in this paper are highly inspired by the stability result proved by Chazal, Silva, and Oudot \cite{chazal:geomcomp}. Formally, the result by Chazal et. al. is that for totally bounded metric spaces $X$ and $Y$ the distance between (extended) persistence diagrams, $\overline{\Dgm}_p(\cdot)$, is less than two times the Gromov-Hausdorff distance between these spaces; 
\begin{align}\label{eq:bottleneck_result}
d_{\text{bottleneck}}(\overline{\Dgm}_p(X),\overline{\Dgm}_p(Y))\leq 2 d_{\text{GH}}(X,Y),
\end{align}
where $d_{\text{bottleneck}}$ is the bottleneck distance between two persistence diagrams, see \eqref{def:bottleneck}, which are extended to have all points on the diagonal with infinity multiplicity, and $d_{\text{GH}}$ is the Gromov-Hausdorff distance. The results hold for several types of complexes, including Rips complexes, over any coefficient field.

Coarsely speaking, the bottleneck distance matches two diagrams as well as possible by minimizing the $\norm{\cdot}_\infty$-norm between matched points. The diagram points are also allowed to be matched to a diagonal, i.e., to point $(a, a)\in\R^2$. The bottleneck distance is then the largest sup norm of these matched diagram points.

The key concept in the result \eqref{eq:bottleneck_result} from \cite{chazal:geomcomp} is $\varepsilon$-\emph{interleaving}, allowing one to compare and move between the persistent homologies. Notably, if there is $(1,\varepsilon,\delta)$-quasi-isometry from $X$ to $Y$, then  $d_{GH}(X,Y)\leq\frac{1}{2}(\varepsilon+2\delta)$ (similar proof than Burago-Burago-Ivanov Corollary 7.3.28  and related definitions \cite{BurBurIva}). 
The result \eqref{eq:bottleneck_result} 
is global in nature, whereas our result gives point-wise stability.
Even though the formulation of our main theorem (Theorem \ref{main-thm-intro}) and the Chazal et al. result \eqref{eq:bottleneck_result} seem very different, in some cases, one can derive the same information from them. For example, the persistence diagram estimation boxes for $X_{cont.}=S^1$ in the previous simple example model (Figure \ref{fig:estimation_simple}) can be derived also from inequality \eqref{eq:bottleneck_result}, when knowing only the indexed persistence diagram of $Y_{noisy}$ and model parameters $L$, $\varepsilon_0$, $\varepsilon_1$ and $\varepsilon_2$.
However, we found that estimating the Gromov-Hausdorff distance is challenging, or even impossible, in many cases, whereas constructing a quasi-isometry is more feasible and naturally arises in some inverse problems. We noted that we can formalize the same kind of configuration as $\varepsilon$-interleaving between homology groups using quasi-isometry.

This paper is organized as follows. Section \ref{sec:model_inverse_prob} introduces the concerned model in more detail, giving needed terminology. Furthermore, classical stability results of the Radon transform and the conductivity problem are presented, and their connection to previous terminology is given. Section \ref{sec:quasi} focuses on quasi-isometry, the related results, and how the given examples suit the quasi-isometry framework. Section \ref{sec:per_hom_stab} gives an introduction to persistent homology, and the stability result for general metric spaces $X$ and $Y$ (Theorem \ref{thm:sup-inf-interval2}). Notably, Section \ref{sec:per_hom_stab} with \ref{sec:quasi} is mostly independent from previous Sections, and one can find the results of it interesting as such. The theoretical part of persistent homology is continued in Section \ref{sec:per_dgm_stab}, where persistent diagrams are formalized with needed terminology and results. The theoretical part of the paper is finished in Section \ref{sec:stab_model}, where the results in the previous Section are connected to the concerned $(L,\varepsilon_0,\varepsilon_1,\varepsilon_2)$-model. The paper is finished with some computational examples (Section \ref{sec:computational}) and conclusion (Section \ref{sec:conclusion}).

\section{Model inverse problems}\label{sec:model_inverse_prob}

Inverse problems are often formulated as 
\begin{align}\label{eq:inverseproblem}
    m = F(x)+E,
\end{align}
where $m$ is the known indirect measurement, $F$ is the forward model (linear or non-linear), $x$ is the unknown cause and $E$ is additional noise. In this paper, we are interested in observing the structure of a 
unknown model space $X_{cont.}$ from a noisy, incomplete dataset $Y_{noisy}$. The measurements are taken from finite $\frac{1}{2}\varepsilon_0$-net, denoted by $X_{net}$ of $X_{cont.}$. That is,
$X_{net}\subset X_{cont.}$ and there exists a mapping $f^{(0)}\colon X_{cont.}\to X_{net}$ such that $d_{X_{cont.}}(x,f^{(0)}(x))\leq \frac{1}{2}\varepsilon_0$ for every $x\in X_{cont.}$. We denote that a clean dataset is $Y_{clean}=\{F(x) \mid x\in X_{net}\}$, and noisy dataset is \[Y_{noisy}=\{y_j+E_j\mid  y_j\in Y_{clean}, \norm{E_j}\leq \frac{1}{2}\varepsilon_2 \}.\]
The structure observation problem can be divided into subproblems, namely:
\begin{enumerate}[label={(Q\arabic*)}]
    \item Can we observe topological structure of clean data $Y_{clean}$ from noisy data $Y_{noisy}$?\label{q1}
    \item Can we observe topological structure of sampled model space $X_{net}$ from clean data $Y_{clean}$?\label{q2}
    \item Can we observe topological structure of full model space $X_{cont.}$ from incomplete model space $X_{net}$?\label{q3}
\end{enumerate} 

 The mapping properties of $F$ and $\varepsilon$-density of sampled spaces guarantee that the opposite
 directions of the above questions \ref{q1}-\ref{q3} are valid.
That means we have
\begin{align*}
    X_{cont.}\xrightleftharpoons[\text{\ref{q3}}]{\frac{1}{2}\varepsilon_0-\text{net}} X_{net} \xrightleftharpoons[\text{\ref{q2}}]{F}Y_{clean}\xrightleftharpoons[\text{\ref{q1}}]{\text{noise}}Y_{noisy}. 
\end{align*}
In order to answer the questions \ref{q1}-\ref{q3}, we will go through the tools to do so. We will restrict to cases where the model $F$ has certain properties, namely that the function is quasi-up-increasing. 

\begin{definition}
    Let $\omega_1,\omega_2\colon  [0,\infty) \to [0,\infty)$ be twice differentiable functions on $(0,\infty)$, and $\varepsilon_1,\varepsilon_2 \ge 0$. A mapping $f\colon (X,d_X)\to (Y,d_Y)$ between metric spaces is $(t\mapsto \omega_1(t)+\varepsilon_1;t\mapsto \omega_2(t)+\varepsilon_2)$-\textbf{quasi-up-increasing} if 
\begin{enumerate}
    \item $d_Y(f(x_1),f(x_2)) \le \omega_1(d_X(x_1,x_2)) + \varepsilon_1$ for all $x_1,x_2\in X$, \label{quasi-up-increasing:forward}
    \item $d_X(x_1,x_2) \le \omega_2(d_Y(f(x_1),f(x_2))) + \varepsilon_2$ for all $x_1,x_2\in X$, \label{quasi-up-increasing:inverse}
    \item the first derivatives are strictly positive,  $D\omega_1(t)>0$ and $D\omega_2(t)>0$, for every $t\in (0,\infty)$, \label{quasi-up-increasing:firstderivative}
    \item  the second derivatives are negative, $D^2\omega_1(t)\leq 0$ and $D^2\omega_2(t)\leq0$, for every $t\in (0,\infty)$, and \label{quasi-up-increasing:secondderivative}
    \item\label{quasi-up-increasing:zero} $\omega_1(0)=0$ and $\omega_2(0)=0$.
\end{enumerate}
\end{definition}

Now we move to the examples, where the forward map is quasi-up-increasing.

\subsection{Radon transform}\label{sec:radon}

We consider the case in which the mapping $F$ in equation (\ref{eq:inverseproblem}) is the Radon transform. 
It is widely studied since it is related to several imaging problems, such as medical X-ray imaging. The Radon transform gives the mathematical background for the problem. \cite{Cormack1963,Cormack1964,Natterer} Here, we focus on the stability result of the Radon transform.  
At the end of the section, we have a proposition stating that the Radon transform, restricted to a closed ball, is a quasi-up-increasing mapping. We start with some basic definitions and notations.

\subsubsection{Preliminaries of Radon transform}

Let us first introduce the basic notation following  Natterer's classical book \cite{Natterer}. Denote that $\D^n
\subset \mathbb R^n$ is a unit disk of dimension $n$ and $Z$ an unit cylinder $S^{n-1}\times \R$. The Fourier transformation for a function $u\colon \R^n\to \R$, $u\in L^1(\R^n)$, is
\[
\Hat{u}(\xi)=(2\pi)^{-\frac{n}{2}}\int_{\R^n} e^{-ix\cdot\xi}u(x) \,dx
\]
and for the function on $u\colon Z\to \R$, the Fourier transformation is taken with respect to the second variable $s\in \R$
\[
\Hat{u}(\theta,\sigma)=(2\pi)^{-\frac{1}{2}}\int_{\R} e^{-is\sigma}u(\theta,s) \,ds,
\] where $\theta \in S^{n-1}$.

The Sobolev space $H^{{m}}(\R^n)$, ${{m}}\in \N$, is 
$\{u\in L^2(\R^n) \mid D^\beta u\in L^2(\R^n) \text{ for } |\beta|\leq {{m}}\}$. For ${{m}}>0$, the Sobolev space can be written as
\[H^{{{m}}}(\R^n)=\{u\in L^2(\R^n)\colon \norm{u}^2_{H^{{m}}(\R^n)}<\infty \},
\]
where
\[
\norm{u}^2_{H^{{m}}(\R^n)}=\int_{\R^n}(1+|\xi|^2)^{{m}}|\Hat{u}(\xi)|^2\,d\xi.
\]
Furthermore, we denote that 
\[
H_0^{{{m}}}(\D^n)=\{u\in H^{{{m}}}(\R^n) \mid \text{supp}(u)\subseteq \D^n \}.
\]  
For ${{m}}< 0$, the Sobolev space $H^{{m}}$ is a dual space of $H^{-{{m}}}$. 
The norm of $H^{{{m}}}(Z)$ norm gets form
\[
\norm{u}_{H^{{{m}}}(Z)}^2 =\int_{S^{n-1}}\int_{\R}(1+\sigma^2)^{{{m}}}\lvert \Hat{u}(\theta,\sigma)\rvert^2\,d\sigma\,d\theta.
\]  

Now we can define the Radon transform.
The $n$-dimensional Radon transform of a function $u\colon \R^n\to \R$, $u\in L^1(\R^n)$, is 
\[
\mathcal{R}u(\theta,s)=\int_{x\cdot \theta = s}u(x)\,dx,
\]
where $\theta\in S^{n-1}$ and $s\in \R$.
\subsubsection{Stability of the inverse Radon transform}
There is a well-known stability result for the Radon transform (see e.g. \cite[Theorem 5.1 pp. 42--43 ]{Natterer}).
\begin{theorem}\label{thrm:radon_stability}
For each ${{m}}$ there exist $C_1,C_2>0$ such that for $u\in C^{\infty}_0(\D^n)$ \begin{align}\label{ineq:radon}
C_1\norm{u}_{H^{{{m}}}_0(\D^n)}\leq \norm{\mathcal{R}u}_{H^{{{m}}+\frac{n-1}{2}}(Z)}\leq C_2\norm{u}_{H^{{{m}}}_0(\D^n)}. 
\end{align}
\end{theorem}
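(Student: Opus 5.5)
This is the classical two-sided Sobolev stability estimate for the Radon transform, as in Natterer. The plan is to pass to the Fourier side with the Fourier slice theorem and compare weights in polar coordinates.

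\textbf{Fourier slice theorem and polar coordinates.} For $u\in C_0^\infty(\D^n)$ and $\theta\in S^{n-1}$, $\sigma\in\R$, we have
\[
\widehat{\mathcal R u}(\theta,\sigma)=(2\pi)^{\frac{n-1}{2}}\hat u(\sigma\theta).
\]
Inserting this into the definition of $\norm{\cdot}_{H^{m+\frac{n-1}{2}}(Z)}$ gives
\[
\norm{\mathcal Ru}^2_{H^{m+\frac{n-1}{2}}(Z)}=(2\pi)^{n-1}\int_{S^{n-1}}\int_\R(1+\sigma^2)^{m+\frac{n-1}{2}}|\hat u(\sigma\theta)|^2\,d\sigma\,d\theta .
\]
Since $(\theta,\sigma)$ and $(-\theta,-\sigma)$ give the same point $\sigma\theta$, the integral equals $2\int_{S^{n-1}}\int_0^\infty$. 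In polar coordinates $d\xi=\sigma^{n-1}d\sigma\,d\theta$, this becomes
\[
2(2\pi)^{n-1}\int_{\R^n}(1+|\xi|^2)^{m}\,\frac{(1+|\xi|^2)^{\frac{n-1}{2}}}{|\xi|^{n-1}}\,|\hat u(\xi)|^2\,d\xi .
\]

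\textbf{Upper bound.} The extra factor $w(\xi)=(1+|\xi|^2)^{\frac{n-1}{2}}|\xi|^{1-n}$ is at least $1$ everywhere, tends to $1$ at infinity, and blows up at $\xi=0$. For $|\xi|\ge1$ it is bounded by $2^{\frac{n-1}{2}}$, so that region is controlled by $\norm{u}_{H^m}^2$.

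For $|\xi|\le1$ I use the compact support of $u$. Since $\mathrm{supp}\,u\subset\D^n$, $\hat u$ is entire, and one has $\sup_{|\xi|\le1}|\hat u(\xi)|\le C\norm{u}_{H^m}$. To see this, write $\hat u(\xi)=(2\pi)^{-n/2}\langle u,\chi e^{-ix\cdot\xi}\rangle$ with a cutoff $\chi\in C_0^\infty$ equal to $1$ on $\D^n$. Then
\[
|\hat u(\xi)|\le(2\pi)^{-n/2}\norm{u}_{H^m}\norm{\chi e^{-ix\cdot\xi}}_{H^{-m}},
\]
and the last norm is uniformly bounded for $|\xi|\le1$. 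Because $\int_{|\xi|\le1}|\xi|^{1-n}d\xi<\infty$, the small-frequency part is at most $C\norm{u}_{H^m}^2$. This gives $C_2$.

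\textbf{Lower bound.} Here $w\ge1$ gives directly
\[
\norm{\mathcal Ru}^2_{H^{m+\frac{n-1}{2}}(Z)}\ge 2(2\pi)^{n-1}\norm{u}^2_{H^m},
\]
so $C_1=(2(2\pi)^{n-1})^{1/2}$ works. This step needs no support information.

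\textbf{Main obstacle.} The only delicate point is the singular weight $|\xi|^{1-n}$ at the origin in the upper bound, which is why the support assumption $u\in C_0^\infty(\D^n)$ is needed. The pointwise bound on $\hat u$ near $0$ via duality, with a cutoff function, is the step to check. When $m$ is negative, the bound $\norm{\chi e^{-ix\cdot\xi}}_{H^{-m}}\le C$ for $|\xi|\le1$ still holds, since multiplication by $e^{-ix\cdot\xi}$ shifts the Fourier transform of $\chi$ by at most $1$. This costs at most a factor $2^{|m|}$ in the weight.
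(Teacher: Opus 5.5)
Your argument is correct and is essentially the standard proof of Natterer's Theorem II.5.1, which the paper only cites and does not prove. The Fourier slice theorem and polar coordinates turn $\norm{\mathcal R u}^2_{H^{m+\frac{n-1}{2}}(Z)}$ into $2(2\pi)^{n-1}\int_{\R^n}(1+|\xi|^2)^{m}(1+|\xi|^2)^{\frac{n-1}{2}}|\xi|^{1-n}|\hat u(\xi)|^2\,d\xi$; the weight being at least $1$ gives the lower bound, and splitting at $|\xi|=1$, with a uniform bound on $\hat u$ near the origin from a cutoff $\chi\equiv 1$ on $\mathrm{supp}\,u$, gives the upper bound. The only slip is cosmetic: for $|\xi|\le 1$ the shift changes the weight $(1+|\eta|^2)^{-m}$ by at most a factor $(\tfrac{3+\sqrt5}{2})^{|m|}$, not $2^{|m|}$, which does not affect the conclusion.
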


Since $C^\infty_0(\D^n)$ is dense in $ H^{ {{m}}}_0(\D^n)$, ${{m}}>0$, the result also holds for $u\in H^{ {{m}}}_0(\D^n)$.
One can later see that Theorem \ref{thrm:radon_stability} already yields a quasi-isometry between the metric spaces $H^{{{m}}+\frac{n-1}{2}}(Z)$ and $H^{{{m}}}_0(\D^n)$. However, the metrics are rather complicated from a computational perspective. We prefer to work with metrics induced by the $L^2$ norm. For $L^2$ norm, we get following proposition.

\begin{proposition}
\label{prop:radon_hölder}
 Let $n\ge 2$, $s>0$ and $K>0$. Then there exists constants $C_1,C_2>0$ and $0<p<1$ such that the restriction of the Radon transform $\mathcal R \colon L^2_0(\D^n) \to L^2(Z)$ to the closed ball $B_s(K)=\{ u \in H^s_0(\D^n) \colon \norm{u}_{H^s_0(\D^n)} \le K\}$,
 that is, the map
 \begin{equation}
    \mathcal R\colon   B_s(K)\to L^2(Z),
 \end{equation}
 is a $(t\mapsto C_1t;t\mapsto C_2t^p)$-quasi-up-increasing mapping.  
\end{proposition}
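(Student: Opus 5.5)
We take $\omega_1(t)=C_1t$ and $\omega_2(t)=C_2t^p$ with both additive constants equal to $0$. Conditions (3)–(5) of the definition are then immediate: both functions are increasing on $(0,\infty)$, have nonpositive second derivative because $0<p<1$, and vanish at $0$. So we only need two metric inequalities for $u_1,u_2\in B_s(K)$.

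\emph{Forward bound.} Put $u=u_1-u_2$. Then $\operatorname{supp}u\subseteq\D^n$ and $\norm{u}_{H^s_0(\D^n)}\le 2K$. Apply Theorem~\ref{thrm:radon_stability} with $m=0$. By density it extends from $C_0^\infty(\D^n)$ to $L^2_0(\D^n)$ (alternatively, use the direct estimate $\norm{\mathcal R u}_{L^2(Z)}\le C\norm{u}_{L^2}$ for $u$ supported in the unit ball, from Cauchy–Schwarz on chords of length $\le 2$). This gives
\[\norm{\mathcal Ru}_{L^2(Z)}\le \norm{\mathcal Ru}_{H^{(n-1)/2}(Z)}\le C\norm{u}_{L^2},\]
since $(n-1)/2\ge 0$ and $(1+\sigma^2)^{(n-1)/2}\ge1$. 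This yields $\omega_1(t)=C_1t$.

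\emph{Inverse bound.} This is the main step, and it relies on interpolation together with the a priori bound $K$.
\begin{enumerate}
\item Apply Theorem~\ref{thrm:radon_stability} with $m=-(n-1)/2$, extended by density to $L^2$ functions supported in $\D^n$. This gives
\[\norm{u}_{H^{-(n-1)/2}}\le C\norm{\mathcal Ru}_{L^2(Z)}.\]
\item Interpolate $L^2$ between $H^{-(n-1)/2}$ and $H^s$ on the Fourier side, using Hölder with exponents $\tfrac{1}{\theta},\tfrac{1}{1-\theta}$ and
\[\theta=\frac{s}{s+(n-1)/2}.\]
Writing
\[|\hat u|^2=\bigl((1+|\xi|^2)^{-(n-1)/2}|\hat u|^2\bigr)^{\theta}\bigl((1+|\xi|^2)^{s}|\hat u|^2\bigr)^{1-\theta},\]
the weights cancel because $-\theta(n-1)/2+(1-\theta)s=0$. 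Hence
\[\norm{u}_{L^2}\le \norm{u}_{H^{-(n-1)/2}}^{\theta}\norm{u}_{H^s}^{1-\theta}.\]
\item Combine with $\norm{u}_{H^s}\le 2K$ to obtain
\[\norm{u_1-u_2}_{L^2}\le (2K)^{1-\theta}C^{\theta}\norm{\mathcal Ru_1-\mathcal Ru_2}_{L^2(Z)}^{\theta}.\]
\end{enumerate}
This gives $p=\theta\in(0,1)$ and $C_2=(2K)^{1-\theta}C^\theta$.

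The only delicate point is justifying Theorem~\ref{thrm:radon_stability} for the negative index $m=-(n-1)/2$ on non-smooth $u$. Here the density of $C_0^\infty(\D^n)$ in $L^2_0(\D^n)$ suffices, since both sides of~\eqref{ineq:radon} are continuous in the $L^2$ norm for supported functions. If one prefers to avoid negative indices, apply the theorem with $m=0$ and interpolate $\norm{u}_{L^2}$ between $H^{-(n-1)/2}$ control and the $H^s$ bound in the same way. The metric on $B_s(K)$ is understood to be the $L^2$ metric inherited from $L^2_0(\D^n)$.
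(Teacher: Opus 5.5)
Your proof is correct and follows the paper's route: you use the lower bound of Theorem~\ref{thrm:radon_stability} with $m=-\frac{n-1}{2}$, interpolate $L^2$ between $H^{-\frac{n-1}{2}}$ and $H^s$, and use the a priori bound $\|u_1-u_2\|_{H^s}\le 2K$, which gives the same exponent $p=\frac{2s}{2s+n-1}$. The paper differs only in details: it gets the forward bound from the upper estimate at $m=-\frac{n-1}{2}$ together with $\|u\|_{H^{-(n-1)/2}}\le C\|u\|_{L^2}$, and it cites the interpolation lemma where you prove it directly by H\"older on the Fourier side. Only your closing aside is inaccurate: the case $m=0$ bounds $\|u\|_{L^2}$ by $\|\mathcal R u\|_{H^{(n-1)/2}(Z)}$ and gives no $H^{-\frac{n-1}{2}}$ control of $u$ from $L^2(Z)$ data, so avoiding the negative index would require interpolating on the data side instead (between $L^2(Z)$ and $H^{s+\frac{n-1}{2}}(Z)$, using the case $m=s$).
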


To prove the above proposition, we need the following interpolation result, see e.g.  \cite[pp. 203--204]{Natterer} for details.
\begin{lemma}[Interpolation Lemma]\label{lemma:interpolation}
For $0\leq \beta \leq 1$, ${{m}}_1<{{m}}_2$ and ${{m}}_0 = \beta {{m}}_2+(1-\beta){{m}}_1$, there is a constant $C>0$  such that, for $u\in H^{{{m}}_2}(\D^n)$, the inequality 
 \[
 \norm{u}_{H^{{{m}}_0}(\D^n)}\leq C  \norm{u}_{H^{{{m}}_1}(\D^n)}^{1-\beta}  \norm{u}_{H^{{{m}}_2}(\D^n)}^\beta
\]
holds.
\end{lemma}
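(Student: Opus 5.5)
The statement to be proved is the Interpolation Lemma (Lemma~\ref{lemma:interpolation}). The plan is to derive it from Hölder's inequality applied on the Fourier side, using the norm $\norm{u}^2_{H^{m}(\R^n)}=\int(1+|\xi|^2)^{m}|\Hat u(\xi)|^2\,d\xi$. Functions in $H^{m_2}_0(\D^n)$ are supported in $\D^n$ and are regarded as elements of $H^{m}(\R^n)$, so this is the norm in play. In the convention used here the constant can even be taken to be $C=1$.

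\textbf{Step 1 (splitting the weight).} Since $m_0=\beta m_2+(1-\beta)m_1$, we write
\[
(1+|\xi|^2)^{m_0}|\Hat u|^2=\bigl((1+|\xi|^2)^{m_1}|\Hat u|^2\bigr)^{1-\beta}\bigl((1+|\xi|^2)^{m_2}|\Hat u|^2\bigr)^{\beta}.
\]

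\textbf{Step 2 (Hölder).} For $0<\beta<1$, apply Hölder's inequality with exponents $p=1/(1-\beta)$ and $q=1/\beta$. This gives
\[
\int(1+|\xi|^2)^{m_0}|\Hat u|^2\,d\xi\le\Bigl(\int(1+|\xi|^2)^{m_1}|\Hat u|^2\Bigr)^{1-\beta}\Bigl(\int(1+|\xi|^2)^{m_2}|\Hat u|^2\Bigr)^{\beta}.
\]
Taking square roots yields $\norm{u}_{H^{m_0}}\le\norm{u}_{H^{m_1}}^{1-\beta}\norm{u}_{H^{m_2}}^{\beta}$.

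\textbf{Step 3 (endpoints and finiteness).} The endpoint cases $\beta=0$ and $\beta=1$ are trivial equalities. All the integrals are finite because $u\in H^{m_2}$ and $m_1<m_0\le m_2$, so the weights are dominated by $(1+|\xi|^2)^{m_2}$.

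\textbf{Main subtlety.} The only real care needed is the interpretation of $H^{m}(\D^n)$. If it denotes a restriction (quotient) space on the domain rather than the $H^{m}_0$/whole-space norm, one would instead invoke a bounded extension operator $E\colon H^{m}(\D^n)\to H^{m}(\R^n)$ valid simultaneously for all $m\in[m_1,m_2]$ (for instance Stein's extension for the Lipschitz domain $\D^n$, or a reflection-based one). One then applies the above to $Eu$ and restricts back. This produces the constant $C$ from the operator norms of $E$. This extension step is the one I expect to be the obstacle, because it needs a single extension operator bounded across the whole range of orders. In the setting of the paper (supports in $\D^n$, negative orders defined by duality/Fourier), the Fourier–Hölder argument is sufficient.
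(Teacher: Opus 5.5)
Your Fourier-side argument is correct and is the standard proof behind the reference the paper cites (Natterer, pp.~203--204): you split the weight $(1+|\xi|^2)^{m_0}$ and apply H\"older with exponents $1/(1-\beta)$ and $1/\beta$. The paper gives no proof of its own, and in the way the lemma is actually used (for $u_1-u_2$ supported in $\D^n$, with the whole-space norms defining $H^{m}_0(\D^n)$, including the negative order $m_1=-\frac{n-1}{2}$) your argument gives $C=1$, so the extension-operator caveat is not needed there.
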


\begin{proof}[Proof of Proposition \ref{prop:radon_hölder}]
 Choose ${{m}}=-\frac{n-1}{2}$. By Theorem \ref{thrm:radon_stability} there exist constants $C'_1,C'_2>0$ such that inequality \eqref{ineq:radon} is form
    \begin{align}\label{eq:radon_start_ineq}
    C'_1\norm{u_1-u_2}_{H^{-\frac{n-1}{2}}_0(\D^n)}\leq \norm{\mathcal{R}u_1-\mathcal{R}u_2}_{L^2(Z)}\leq C'_2\norm{u_1-u_2}_{H^{-\frac{n-1}{2}}_0(\D^n)}.
    \end{align}
    Notably $u_1-u_2\in L_0^2(\D^n)$ and $u_1-u_2\in H^{-\frac{n-1}{2}}_0(\D^n)$. Especially there exists constant $C_3'>0$ such that $\norm{u_1-u_2}_{H^{-\frac{n-1}{2}}_0(\D^n)}\leq C_3'\norm{u_1-u_2}_{L^2_0(\D^n)}$. Thus, we get that 
    \[
    \norm{\mathcal{R}u_1-\mathcal{R}u_2}_{L^2(Z)}\leq C'_2\norm{u_1-u_2}_{H^{-\frac{n-1}{2}}_0(\D^n)}\leq C'_2C_3'\norm{u_1-u_2}_{L^2_0(\D^n)}.
    \] Denote that $C_1:=C_2'C_3'$, and $\omega_1\colon[0,\infty)\to [0,\infty)$, $t\mapsto C_1t$. 
    Now condition \eqref{quasi-up-increasing:forward} of quasi-up-increasing mapping, and conditions \eqref{quasi-up-increasing:zero}, \eqref{quasi-up-increasing:firstderivative} and \eqref{quasi-up-increasing:secondderivative} for $\omega_1$ are satisfied. 
    
    Next, we prove the remaining conditions.
    Choose ${{m}}_0=0$, ${{m}}_1=-\frac{n-1}{2}$ and ${{m}}_2=s$. Then by solving ${{m}}_0=\beta{{m}}_2+(1-\beta){{m}}_1$, we obtain that $\beta=\frac{n-1}{2s+n-1}$. Notably $2s+n-1\neq 0$, and furthermore $0< \beta <1$.
    By Lemma \ref{lemma:interpolation} we obtain that there exits a constant $C_4'>0$ such that
    \[
    \norm{u_1-u_2}_{L^2_0(\D^n)}\leq C_4'\norm{u_1-u_2}_{H_0^{-\frac{n-1}{2}}(\D^n)}^{1-\beta}\norm{u_1-u_2}_{H^s_0(\D^n)}^{\beta}.
    \]
    Suppose $u_1,u_2\in B_s(K)$. We have that 
    \[
    \norm{u_1-u_2}_{L^2_0(\D^n)}\leq C_4'(2K)^{\beta}\norm{u_1-u_2}_{H_0^{-\frac{n-1}{2}}(\D^n)}^{1-\beta}.
    \]
    Applying left hand side of inequality \eqref{eq:radon_start_ineq}, we get that
    \[
     \norm{u_1-u_2}_{L^2_0(\D^n)}\leq C_4'(2K)^{\beta} ({C'_1})^{-(1-\beta)}\norm{\mathcal{R}u_1-\mathcal{R}u_2}_{L^2(Z)}^{1-\beta}.
    \]
    Denote that $C_2:=C_4'(2K)^{\beta} (C'_1)^{-(1-\beta)}$, $p:=1-\beta$ and $\omega_2\colon [0,\infty)\to [0,\infty)$, $t\mapsto C_2t^p$. Remaining conditions of quasi-up-increasing mapping are satisfied for $\omega_2$. The claim follows.
\end{proof}

Computational results on Radon transform in the 2D-case are presented in Section \ref{sec:computational}.

\subsection{Inverse conductivity problem}\label{sec:conductivity}
Next, we consider the second example, the inverse conductivity problem. The inverse boundary value problem for the conductivity equation
was originally proposed by Calder\'on \cite{calderon2006inverse} in 1980. The Calder\'on problem and its application to electrical impedance tomography (EIT) have been extensively studied from both analytic and computational perspectives \cite{Borcea}. Recently, the EIT imaging algorithms have been used for stroke classification, see \cite{agnelli2025stroke,sun2025learned,tanyu2023electrical}.

We start with some basic notation.
Let $\Omega$ bounded domain with $C^\infty$ boundary in $\R^n$, $n\geq 3$.
For the conductivity problem, we define first the Dirichlet-to-Neumann map $\Lambda_{\gamma}\colon H^{\frac{1}{2}}(\partial\Omega)\to H^{-\frac{1}{2}}(\partial\Omega)$ on $\Omega$ for  conductivity $\gamma{(x)}$ in $\Omega$.

Let $\gamma\in H^{2+s}(\Omega)$, $s>\frac{n}{2}$. For every voltage potential on
the boundary, $\Phi\in H^\frac{1}{2}(\partial \Omega)$, we denote by $u=u_{\Phi,\gamma}\in H^1(\Omega)$ the (unique) solution to the Dirichlet problem
\[
\left\{ \begin{array}{rl}
\nabla\cdot(\gamma{(x)}\nabla u{(x)})=0 &\text{ in }\Omega, \\
u=\Phi &\text{ on } \partial\Omega.
\end{array}\right.
\]
The \emph{Dirichlet-to-Neumann map} $\Lambda_{\gamma}\colon H^{\frac{1}{2}}(\partial\Omega)\to H^{-\frac{1}{2}}(\partial\Omega)$ is the map 
\[
\Lambda_{\gamma}(\Phi)= \gamma \frac{\partial u_{\Phi,\gamma}}{\partial \upsilon}. 
\]
where $\upsilon$ is the outer normal to $\partial\Omega$.  Notably, the Dirichlet-to-Neumann map is a bounded linear map.

We call mapping \[
\mathcal{F}\colon H^{s+2}(\Omega)\to \mathcal{L}(H^\frac{1}{2}(\partial \Omega),H^{-\frac{1}{2}}(\partial\Omega) ), \gamma\mapsto \Lambda_\gamma\] as \emph{Calder\'on's forward map}. 

Sylvester and Uhlmann \cite{SylvesterUhlmann} proved the unique identifiability of the
conductivity in dimension $d\geq 3$ for isotropic conductivities which
are $C^\infty$-smooth. Analytic reconstruction methods for this problem and the corresponding scattering problem based on the complex geometrical optics are given in \cite{Nachman1988,Novikov1998}.
In $d=2$ dimensions, the first global solution of the inverse conductivity problem is due to Nachman \cite{nachman1996global} for conductivities with two derivatives. In this seminal paper, the $\overline\partial$ technique was used in the study of Calder\'on's inverse problem.
Finally, Astala and P\"aiv\"arinta \cite{astala2006calderon} proved the uniqueness of the inverse 
problem in the form of its original formulation from \cite{calderon2006inverse}, i.e., for general isotropic conductivities in $L^\infty$ which are bounded from below and above by positive constants. For further developments on the uniqueness of the inverse problem, see
\cite{AstalaLassasPaivarinta2016,caro2016global,FerreiraKSU,IdeUhlmannCPAM,
novikov2010global,uhlmann2009electrical}. 

In this work, we are especially interested in stability properties. We denote that for $K>0$ and $s>\frac n2$, the set of measurable and bounded conductivities in $\Omega$ is \[
B^{n,s}(K,\Omega)=\{\gamma\in H^{2+s}(\Omega) \colon K\geq\gamma(x)\geq \frac{1}{K} \text{ for all } x\in \Omega, \norm{\gamma}_{ H^{2+s}(\Omega)}\leq K \}.\] 
The Calderon's forward map restricted to $B^{n,s}(K,\Omega)$ is Lipschitz stable. That is, for $\gamma_1,\gamma_2 \in B^{n,s}(K,\Omega)$ there exists a positive constant $C>0$ such that
\begin{align}
\norm{\Lambda_{\gamma_1}-\Lambda_{\gamma_2}}_{H^{\frac{1}{2}}(\partial\Omega)\to H^{-\frac{1}{2}}(\partial\Omega)} \leq C \norm{\gamma_1-\gamma_2}_{L^2(\Omega)}.   
\end{align}
Here $\norm{\cdot} _{H^{\frac{1}{2}}(\partial\Omega)\to H^{-\frac{1}{2}}(\partial\Omega)}$ is the operator norm for linear operators.
The inverse stability is much more delicate. Classical logarithmic-type stability estimates have
been studied in \cite{alessandrini1988stable,barcelo2001stability,caro2013stability,clop2010stability}. 
Here, we present one version of classical logarithmic-type stability.

\begin{theorem}
    Suppose $n\ge 3$, $s>\frac{n}{2},$ $K>0$ and $\Omega$ bounded domain with $C^\infty$ boundary. Let $\gamma_1,\gamma_2\in B^{n,s}(K,\Omega)$.
  Then, there exist numbers $C>1$  (depending on $K, s , n$, and $\Omega$) and  $0<p<1$ (depending on $n$ and $s$) 
  such that 
    \[
    \norm{\gamma_1-\gamma_2}_{L^\infty(\Omega)}\leq C \frac 1{\big(\ln(1+\norm{\Lambda_{\gamma_1}-\Lambda_{\gamma_2}}^{-1}_{H^{\frac{1}{2}}(\partial\Omega)\to H^{-\frac{1}{2}}(\partial\Omega)})\big)^{p}},
    \]
    when $\norm{\Lambda_{\gamma_1}-\Lambda_{\gamma_2}}_{H^{\frac{1}{2}}(\partial\Omega)\to H^{-\frac{1}{2}}(\partial\Omega)}>0$. 
\end{theorem}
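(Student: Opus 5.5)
This theorem is the logarithmic stability estimate of Alessandrini type, and I would reproduce the classical argument. It combines complex geometrical optics (CGO) solutions of Sylvester--Uhlmann with the Alessandrini boundary identity, followed by an optimisation over a frequency parameter. Throughout, write $\delta=\norm{\Lambda_{\gamma_1}-\Lambda_{\gamma_2}}$. Since $\gamma_j\in B^{n,s}(K,\Omega)$ with $s>\frac n2$, Sobolev embedding gives $\gamma_j\in C^{2,\alpha}(\overline\Omega)$ with norms bounded in terms of $K$.

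\textbf{Step 1 (boundary determination).} I would first invoke the boundary stability estimates of Alessandrini (or Sylvester--Uhlmann):
\[
\norm{\gamma_1-\gamma_2}_{L^\infty(\partial\Omega)}+\norm{\partial_\upsilon(\gamma_1-\gamma_2)}_{L^\infty(\partial\Omega)}\le C\delta^{\sigma}
\]
for some $\sigma\in(0,1)$. This is needed because the Liouville transform $q_j=\Delta\sqrt{\gamma_j}/\sqrt{\gamma_j}$ introduces boundary terms. Next I reduce to Schr\"odinger operators $-\Delta+q_j$. I extend $\gamma_j$ to $\R^n$ so that $\gamma_j-1$ is compactly supported and $\gamma_1=\gamma_2$ outside $\Omega$ up to an error controlled by Step 1.

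\textbf{Step 2 (integral identity and CGO).} Alessandrini's identity gives
\[
\int_\Omega (q_1-q_2)u_1u_2\,dx=\langle(\tilde\Lambda_{q_1}-\tilde\Lambda_{q_2})u_1|_{\partial\Omega},u_2|_{\partial\Omega}\rangle
\]
plus boundary terms bounded by $C\delta^\sigma$. For each $\xi\in\R^n$ and each large $\tau\ge\tau_0(K)$, I choose $\zeta_1,\zeta_2\in\mathbb C^n$ with $\zeta_j\cdot\zeta_j=0$, $\zeta_1+\zeta_2=-i\xi$ and $|\zeta_j|\sim\tau$. Sylvester--Uhlmann then gives solutions $u_j=e^{x\cdot\zeta_j}(1+\psi_j)$ with $\norm{\psi_j}_{L^2(\Omega)}\le C/\tau$. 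The traces satisfy $\norm{u_j}_{H^{1/2}(\partial\Omega)}\le Ce^{C\tau}$. Plugging these in yields
\[
|\widehat{(q_1-q_2)}(\xi)|\le C\big(e^{C\tau}\delta+\delta^\sigma+\tau^{-1}\big),\qquad |\xi|\le\tau .
\]

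\textbf{Step 3 (frequency splitting and optimisation).} I split $\norm{q_1-q_2}_{H^{-1}}^2$ into the region $|\xi|\le\rho$ and its complement. The low-frequency part is bounded by $\rho^n(\cdot)^2$ using Step 2. The high-frequency part is bounded by $\rho^{-2}\norm{q_1-q_2}_{L^2}^2\le C\rho^{-2}$, using the a priori bound. I take $\rho=\tau^{1/(n+2)}$ and $\tau=\frac1{C}\ln(1+\delta^{-1})$. This gives $\norm{q_1-q_2}_{H^{-1}}\le C(\ln(1+\delta^{-1}))^{-p_1}$. I then pass back to $\gamma$. The function $w=\log\gamma_1-\log\gamma_2$ solves the elliptic equation
\[
\nabla\cdot(\sqrt{\gamma_1\gamma_2}\nabla w)=2\sqrt{\gamma_1\gamma_2}(q_1-q_2)
\]
with boundary data controlled by Step 1. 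Elliptic estimates therefore give $\norm{w}_{H^1}\le C(\ln(1+\delta^{-1}))^{-p_1}$. Finally, I interpolate between $H^1$ and $H^{2+s}$ and use Sobolev embedding into $L^\infty$, which is possible since $s>\frac n2$, via Lemma \ref{lemma:interpolation}. This reduces the exponent to some $p\in(0,1)$ depending only on $n,s$. Converting $w$ back to $\gamma_1-\gamma_2$ uses $\gamma_j\ge 1/K$.

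\textbf{Main obstacle.} I expect the hardest step to be Step 2: constructing the CGO solutions with the quantitative remainder bound $C/\tau$ uniformly over $B^{n,s}(K,\Omega)$, together with the exponential trace bounds. This requires the Sylvester--Uhlmann weighted $L^2$ estimates for $\Delta+2\zeta\cdot\nabla$. I would cite these rather than reprove them. Step 1, the boundary estimate, is the other delicate ingredient, and I would also cite it from \cite{alessandrini1988stable}.
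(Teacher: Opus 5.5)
The paper gives no proof of this statement: it quotes it as a known logarithmic stability estimate from the literature (\cite{alessandrini1988stable} and the other works cited just before it). Your outline is exactly the classical argument behind that citation: boundary determination, CGO solutions inserted into the Schr\"odinger form of Alessandrini's identity, frequency splitting for $\norm{q_1-q_2}_{H^{-1}}$, and then the elliptic equation for $\log\gamma_1-\log\gamma_2$ followed by interpolation with the a priori $H^{2+s}$ bound and Sobolev embedding, so it is consistent with the paper. The only slips are harmless:

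\begin{itemize}
\item Unless $\gamma_1=\gamma_2$ on $\partial\Omega$, the boundary-difference terms in Step 2 in general also carry the factor $e^{C\tau}$. The Fourier bound is therefore really $C(e^{C\tau}\delta^{\sigma'}+\tau^{-1})$ for some $\sigma'>0$, and this is still absorbed by the choice $\tau=c\ln(1+\delta^{-1})$ with $c$ small.
\item The final passage from $w$ to $\gamma_1-\gamma_2$ uses the upper bound $\gamma_j\le K$, not the lower bound $\gamma_j\ge 1/K$.
\end{itemize}
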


Now, from Lipschitz stability and logarithmic stability, we get the following corollary. 

\begin{corollary}\label{prop:conductivity_quasi-Lip-log}
Let $n\ge 3$, $s>\frac{n}{2},$ $K>0$ and $\Omega$ bounded domain with $C^\infty$ boundary in $\R^n$. Then there exists $C_1,C_2>0$, and  $0<p<1$ such that a Calder\'on's forward map $\mathcal{F}\colon H^{s+2}(\Omega)\to \mathcal{L}(H^\frac{1}{2}(\partial\Omega),H^{-\frac{1}{2}}(\partial\Omega) )$ restricted to $B^{n,s}(K,\Omega)$ is $(t\mapsto C_1t;t\mapsto \omega(t) )$-quasi-up-increasing, where \[
\omega(t)=\begin{cases}
    C_2(\ln(1+t^{-1}))^{-p}, \text{ when } t>0\\
0,   \text{ when } t=0
\end{cases}
.\]
\end{corollary}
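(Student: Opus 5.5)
We verify the five conditions in the definition of a quasi-up-increasing mapping for $\omega_1(t)=C_1t$, $\omega_2=\omega$, $\varepsilon_1=\varepsilon_2=0$. We set $\delta:=\norm{\Lambda_{\gamma_1}-\Lambda_{\gamma_2}}_{H^{\frac{1}{2}}(\partial\Omega)\to H^{-\frac{1}{2}}(\partial\Omega)}$ for $\gamma_1,\gamma_2\in B^{n,s}(K,\Omega)$.

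For condition \eqref{quasi-up-increasing:forward}, we take $C_1:=C$ from the Lipschitz stability estimate quoted above. Then $\delta\le C_1\norm{\gamma_1-\gamma_2}_{L^2(\Omega)}$. Conditions \eqref{quasi-up-increasing:firstderivative}, \eqref{quasi-up-increasing:secondderivative} and \eqref{quasi-up-increasing:zero} hold trivially for the linear function $\omega_1$.

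For condition \eqref{quasi-up-increasing:inverse}, the metric on the domain is the $L^2(\Omega)$ metric, but the logarithmic stability theorem controls the $L^\infty$ norm. Since $\Omega$ is bounded, $\norm{\gamma_1-\gamma_2}_{L^2(\Omega)}\le |\Omega|^{1/2}\norm{\gamma_1-\gamma_2}_{L^\infty(\Omega)}$. So when $\delta>0$ we obtain $\norm{\gamma_1-\gamma_2}_{L^2}\le C_2(\ln(1+\delta^{-1}))^{-p}$ with $C_2:=|\Omega|^{1/2}C$. When $\delta=0$, the uniqueness result (Sylvester--Uhlmann, $n\ge3$) gives $\gamma_1=\gamma_2$, so the inequality holds with $\omega(0)=0$. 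Alternatively, the $\delta=0$ case follows from the $\delta>0$ estimate by continuity, since $\omega(t)\to0$ as $t\to0^+$.

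It remains to check the analytic conditions on $\omega(t)=C_2(\ln(1+1/t))^{-p}$ for $t>0$. Set $g(t)=\ln(1+1/t)$, so that $g'(t)=-\frac{1}{t(t+1)}<0$. Then
\[
\omega'(t)=C_2\,p\,g^{-p-1}\frac{1}{t(t+1)}>0,
\]
which gives condition \eqref{quasi-up-increasing:firstderivative}. Condition \eqref{quasi-up-increasing:zero} holds because $g(t)\to\infty$ as $t\to0^+$, so $\omega$ is also continuous at $0$.

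The main obstacle is concavity, condition \eqref{quasi-up-increasing:secondderivative}. Differentiating $\log\omega'=\mathrm{const}-(p+1)\log g-\log t-\log(t+1)$ gives
\[
\frac{\omega''}{\omega'}=\frac{p+1}{g\,t(t+1)}-\frac1t-\frac1{t+1}.
\]
Hence $\omega''\le0$ is equivalent to $(p+1)\le (2t+1)\ln(1+1/t)$. The function $h(t)=(2t+1)\ln(1+1/t)$ is decreasing with limit $2$ as $t\to\infty$. We expect $h(t)\ge 2$ for all $t>0$, using $\ln(1+x)\ge \frac{2x}{2+x}$ with $x=1/t$, which gives exactly $h\ge2$. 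Since $p<1$, we have $p+1<2\le h(t)$, so $\omega''\le0$ on $(0,\infty)$. Twice differentiability on $(0,\infty)$ is clear.
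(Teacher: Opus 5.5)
Your proof is correct and follows the paper's route; the paper gives no written proof and only says the corollary follows from the quoted Lipschitz and logarithmic stability estimates. Your conversion $\norm{\cdot}_{L^2}\le|\Omega|^{1/2}\norm{\cdot}_{L^\infty}$, the separate treatment of $\Lambda_{\gamma_1}=\Lambda_{\gamma_2}$, and the concavity check via $(2t+1)\ln(1+1/t)\ge 2>p+1$ supply exactly the details the paper leaves implicit.

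Do drop the ``by continuity'' alternative for $\delta=0$. For a fixed pair with $\Lambda_{\gamma_1}=\Lambda_{\gamma_2}$ the logarithmic estimate does not apply at all, so injectivity has to come from a uniqueness theorem. Such a theorem is available here because $H^{2+s}(\Omega)\subset C^2(\overline{\Omega})$ for $s>\frac n2$, whereas the original Sylvester--Uhlmann result is stated only for $C^\infty$ conductivities.
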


Consequences of the above stability results for the inverse problems and some computational
examples are given later in Section \ref{sec:computational}. Next,
we formulate rigorously the objects used in the study of quasi-isometries and persistent homology, and formulate basic results for those to provide tools used in the proof of 
 the main results.

 
\section{Preliminaries on quasi-isometries}\label{sec:quasi}
Quasi-isometry will be one of the key elements when introducing the persistent homology theory in order to observe structures and answer the questions \ref{q1}-\ref{q3}.
A quasi-isometry is a function between two metric spaces.  In this section, we give the definition and related results.
Formally, a quasi-isometry is defined as follows.

\begin{definition}
A mapping $f\colon (X,d_X)\to (Y,d_Y)$ between metric spaces is an \emph{$(L,\varepsilon)$-quasi-isometric embedding} if there exists constants $L\geq 1$ and $\varepsilon\geq 0$ having the property that, for all $x_1,x_2\in X$,
\begin{align*}
\frac{1}{L}d_X(x_1,x_2)-\varepsilon\leq d_Y(f(x_1),f(x_2))\leq Ld_X(x_1,x_2)+\varepsilon.
\end{align*}
\end{definition}

It is immediate from the definition that a quasi-isometric embedding need neither be continuous nor injective. However, we have the following property.
\begin{lemma}
\label{lemma:QI-radius}
Let $f\colon (X,d_X)\to (Y,d_Y)$ be an $(L,\varepsilon)$-quasi-isometric embedding. Then, for each $x\in X$, $f^{-1}(f(x)) \subset \bar B_X(x, L\varepsilon)$.
\end{lemma}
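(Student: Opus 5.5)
The argument is a direct application of the lower inequality in the definition of an $(L,\varepsilon)$-quasi-isometric embedding. Fix $x\in X$ and take an arbitrary $x'\in f^{-1}(f(x))$, so that $f(x')=f(x)$ and therefore $d_Y(f(x),f(x'))=0$. We need to show that $d_X(x,x')\le L\varepsilon$.

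Applying the left-hand inequality of the definition to the pair $x,x'$ gives
\[
\frac{1}{L}d_X(x,x')-\varepsilon\le d_Y(f(x),f(x'))=0 .
\]
Rearranging, and using that $L\ge 1>0$ so multiplying by $L$ keeps the direction of the inequality, we get $d_X(x,x')\le L\varepsilon$. This says exactly that $x'\in \bar B_X(x,L\varepsilon)$.

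Since $x'$ was an arbitrary point of the fibre, the inclusion $f^{-1}(f(x))\subset \bar B_X(x,L\varepsilon)$ follows. There is no real obstacle here. The only points to watch are that the ball must be the closed one, because the bound $d_X(x,x')\le L\varepsilon$ is not strict, and that the degenerate case $\varepsilon=0$ is covered automatically: then $f$ is injective and the fibre is just $\{x\}$.
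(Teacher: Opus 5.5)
Your proof is correct and is essentially the paper's argument: the paper also applies the lower quasi-isometry inequality to the pair $x, x'$ with $f(x')=f(x)$ and obtains $d_X(x,x') \le L\bigl(d_Y(f(x),f(x')) + \varepsilon\bigr) = L\varepsilon$. Your side remarks, that the ball must be closed and that the case $\varepsilon = 0$ forces the fibre to be $\{x\}$, are both accurate.
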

\begin{proof}
Let $x\in X$ and $x'\in f^{-1}(f(x))$. Then
\[
d_X(x,x') \le L\left( d_Y(f(x),f(x')) + \varepsilon\right) = L \varepsilon.
\]
The claim follows.
\end{proof}

A quasi-isometric embedding $X\to Y$, which is coarsely surjective, is called a quasi-isometry. For the definition, recall that the Hausdorff distance of the subsets $A$ and $A'$ in a metric space $(X,d)$ is 
\[
\dist_\Haus(A,A') = \inf\{ r \ge 0 \colon A \subset B_d(A',r) \text{ and } A' \subset B_d(A,r) \},
\]
where $B_d(E,r) = \{ x \in X \colon \dist_d(x,E)<r\}$ is the $r$-neighborhood of a subset $E\subset X$. 

\begin{definition}
A mapping $f \colon X\to Y$ between metric spaces is an \emph{$(L,\varepsilon,\delta)$-quasi-isometry for $L \ge 1$, $\varepsilon\ge 0$, and $\delta \ge 0$} if $f$ is an $(L,\varepsilon)$-quasi-isometric embedding and $\dist_\Haus(fX,Y) \le \delta$.
\end{definition}

\newcommand{\QI}{\mathrm{QI}}
We denote
\[
\QI_{(L,\varepsilon,\delta)}(X,Y) = \{ f \colon X\to Y \colon f \text{ is an $(L,\varepsilon,\delta)$-quasi-isometry}\}.
\]
the family of all $(L,\varepsilon,\delta)$-quasi-isometries.

\subsection{Basic properties of quasi-isometries}

It is well-known that the composition of quasi-isometric embeddings (resp.~quasi-isometries) is a quasi-isometric embedding (resp.~quasi-isometry). We record this as follows.

\begin{lemma}
Let $f \colon (X,d_X)\to (Y,d_Y)$ and $f'\colon (Y,d_Y) \to (Z,d_Z)$ be $(L,\varepsilon)$- and $(L',\varepsilon')$-quasi-isometric embeddings, respectively. Then the composition $f'\circ f\colon X\to Z$ is an $(LL', L'\varepsilon + \varepsilon')$-quasi-isometric embedding. If, in addition, $f\in \QI_{(L,\varepsilon,\delta)}(X,Y)$ and $f'\in \QI_{(L',\varepsilon',\delta')}(Y,Z)$, then $f'\circ f \in \QI_{(LL',L'\varepsilon +\varepsilon',L'\delta+ \varepsilon' + \delta')}(X,Z)$.
\end{lemma}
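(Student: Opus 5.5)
The proof is a direct verification from the definitions, in two parts: first the embedding inequalities for $f'\circ f$, then the Hausdorff bound for its image.

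\emph{Embedding inequalities.} Fix $x_1,x_2\in X$. For the upper bound, apply the upper inequality for $f'$ to the points $f(x_1),f(x_2)\in Y$, and then the upper inequality for $f$. This gives
\[
d_Z(f'f(x_1),f'f(x_2))\le L'\,d_Y(f(x_1),f(x_2))+\varepsilon' \le LL'\,d_X(x_1,x_2)+L'\varepsilon+\varepsilon'.
\]
For the lower bound, use the lower inequality for $f'$ and then the lower inequality for $f$:
\[
d_Z(f'f(x_1),f'f(x_2))\ge \tfrac{1}{L'}\Big(\tfrac{1}{L}d_X(x_1,x_2)-\varepsilon\Big)-\varepsilon' .
\]
The right-hand side equals $\frac{1}{LL'}d_X(x_1,x_2)-\frac{\varepsilon}{L'}-\varepsilon'$. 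Since $L'\ge 1$, we have $\varepsilon/L'\le L'\varepsilon$, so the lower bound $\frac{1}{LL'}d_X-(L'\varepsilon+\varepsilon')$ also holds. Finally $LL'\ge 1$, so $f'\circ f$ is an $(LL',L'\varepsilon+\varepsilon')$-quasi-isometric embedding.

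\emph{Coarse surjectivity.} We must show $\dist_\Haus(f'f X,Z)\le L'\delta+\varepsilon'+\delta'$. Since $f'fX\subset Z$, only the inclusion $Z\subset B_{d_Z}(f'fX,r)$ matters, for every $r>L'\delta+\varepsilon'+\delta'$.

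Take such an $r$ and write $r=L'\delta_1+\varepsilon'+\delta_1'$ with $\delta_1>\delta$ and $\delta_1'>\delta'$; this is possible because there is positive slack, split between the two terms. Let $z\in Z$. Since $\dist_\Haus(f'Y,Z)\le\delta'<\delta_1'$, there is $y\in Y$ with $d_Z(z,f'(y))<\delta_1'$. Since $\dist_\Haus(fX,Y)\le\delta<\delta_1$, there is $x\in X$ with $d_Y(y,f(x))<\delta_1$. The upper inequality for $f'$ gives
\[
d_Z(f'(y),f'f(x))\le L'\delta_1+\varepsilon'.
\]
By the triangle inequality, $d_Z(z,f'f(x))<\delta_1'+L'\delta_1+\varepsilon'=r$.

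Hence $Z\subset B_{d_Z}(f'fX,r)$ for every $r>L'\delta+\varepsilon'+\delta'$. Taking the infimum over such $r$ gives the Hausdorff bound.

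\emph{Main obstacle.} The only delicate point is the strict versus non-strict inequalities in the Hausdorff distance, since it is an infimum over open neighborhoods. This is handled by working with $\delta_1>\delta$ and $\delta_1'>\delta'$ throughout and passing to the limit only at the end.
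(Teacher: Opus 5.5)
Your proof is correct and follows the paper's argument: you chain the upper and lower inequalities of $f'$ and $f$, absorb $\varepsilon/L'$ into $L'\varepsilon$ using $L'\ge 1$, and use a triangle-inequality argument for coarse surjectivity. Your device of working with $\delta_1>\delta$ and $\delta_1'>\delta'$ is slightly more careful than the paper. The paper directly picks $y$ with $d_Z(f'(y),z)\le\delta'$ and $x$ with $d_Y(f(x),y)\le\delta$, which implicitly assumes the infimum in the Hausdorff distance is attained.
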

\begin{proof}
Let $x,y\in X$. Then
\[
d_Z(f'(f(x)),f'(f(y))) \le L' d_Y(f(x),f(y)) + \varepsilon' \le LL' d_X(x,y) + L'\varepsilon + \varepsilon'.
\]
and, similarly, 
\begin{align*}
d_Z(f'(f(x)), f'(f(y)) &\ge \frac{1}{L'} d_Y(f(x),f(y)) - \varepsilon' 
\ge \frac{1}{L'} \left( \frac{1}{L} d_X(x,y) - \varepsilon\right) - \varepsilon' \\
&\ge \frac{1}{LL'} d_X(x,y) - ( L'\varepsilon + \varepsilon').
\end{align*}
Thus $f' \circ f$ is an $(LL', L'\varepsilon + \varepsilon')$ quasi-isometric embedding.

For the second claim, it suffices to show that 
\[
\dist_\Haus((f'\circ f)(X),Z) \le L'\delta +  \varepsilon' + \delta'.
\]
Let $z\in Z$. Since $\dist_\Haus(f'(Y),Z)\le \delta'$, there exists $y \in Y$ for which $d_Z(f'(y),z)\le \delta'$. Since $\dist_\Haus(f(X),Y))\le \delta$, there exists $x\in X$ for which $d_Y(f(x),y)\le \delta$. Thus
\begin{align*}
d_Z(f'(f(x)),z) &\le d_Z(f'(f(x)),f'(y)) + d_Z(f'(y),z) \\
&\le L'd_Y(f(x),y) + \varepsilon' + \delta'
\le L' \delta + \varepsilon' + \delta'.
\end{align*}
The claim follows.
\end{proof}

Each quasi-isometry $X\to Y$ has a quasi-inverse $Y \to X$ in the following sense. We emphasize already at this point that the map $\Phi$, a choice of a quasi-inverse, in the following statement is not unique. 

\begin{proposition}
\label{prop:quasi-inverse}
For $L\ge 1$, $\varepsilon\ge 0$, and $\delta\ge 0$, there exists a map 
\[
\Phi \colon \QI_{(L,\varepsilon,\delta)}(X,Y) \to \QI_{(L,L(\varepsilon+2\delta),L\varepsilon)}(Y,X)
\]
for which $\Phi(f)\circ f \in \QI_{(1,2L\varepsilon,L\varepsilon)}(X,X)$ and $f\circ \Phi(f) \in \QI_{(1,2\delta, \delta)}(Y,Y)$. 
Moreover, $d_X(\Phi(f)\circ f(x),x)\le L\varepsilon$ for each $x\in X$ and $d_Y(f\circ \Phi(f)(y),y)\le \delta$ for each $y\in Y$.
\end{proposition}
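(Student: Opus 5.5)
We construct $\Phi$ pointwise by a choice of near-preimages, and then verify the required estimates by direct triangle-inequality arguments.

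\emph{Construction.} Fix $f\in \QI_{(L,\varepsilon,\delta)}(X,Y)$. Since $\dist_\Haus(fX,Y)\le\delta$, for each $y\in Y$ there is a point of $fX$ within distance $\delta$ of $y$. If the infimum is not attained we may need a small slack; we will handle this as discussed below. Using the axiom of choice, pick $g(y)\in X$ with $d_Y(f(g(y)),y)\le\delta$. For $y\in fX$ we make the special choice $d_Y(f(g(y)),y)=0$, i.e.\ $g(y)\in f^{-1}(y)$. Set $\Phi(f)=g$.

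\emph{Distance estimates for $g$.} For $y_1,y_2\in Y$ we have
\[
|d_Y(f g y_1,f g y_2)-d_Y(y_1,y_2)|\le 2\delta .
\]
Combining this with the lower bound for $f$ gives
\[
d_X(gy_1,gy_2)\le L\bigl(d_Y(fgy_1,fgy_2)+\varepsilon\bigr)\le L\,d_Y(y_1,y_2)+L(\varepsilon+2\delta).
\]
Similarly, the upper bound for $f$ gives
\[
d_Y(y_1,y_2)\le L\,d_X(gy_1,gy_2)+\varepsilon+2\delta,
\]
hence $d_X(gy_1,gy_2)\ge \frac1L d_Y(y_1,y_2)-\frac{\varepsilon+2\delta}{L}$. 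Since $L\ge1$, the constant $\frac{\varepsilon+2\delta}{L}$ is at most $L(\varepsilon+2\delta)$, so $g$ is an $(L,L(\varepsilon+2\delta))$-quasi-isometric embedding.

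\emph{Coarse surjectivity of $g$ and the composition bounds.} For $x\in X$, the point $g(f(x))$ lies in $f^{-1}(f(x))$ by our special choice. Lemma~\ref{lemma:QI-radius} then gives $d_X(gf(x),x)\le L\varepsilon$. Thus $\dist_\Haus(gY,X)\le L\varepsilon$, which completes $g\in\QI_{(L,L(\varepsilon+2\delta),L\varepsilon)}(Y,X)$, and it also proves the ``moreover'' bound for $g\circ f$. The bound $d_Y(fg(y),y)\le\delta$ holds by construction.

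For the compositions, the triangle inequality gives
\[
|d_X(gfx_1,gfx_2)-d_X(x_1,x_2)|\le 2L\varepsilon,
\]
so $g\circ f$ is a $(1,2L\varepsilon)$-quasi-isometric embedding. Its image contains $g(fX)$, which is $L\varepsilon$-dense since every $x$ is within $L\varepsilon$ of $gf(x)$. Hence $g\circ f\in\QI_{(1,2L\varepsilon,L\varepsilon)}$. Likewise $|d_Y(fgy_1,fgy_2)-d_Y(y_1,y_2)|\le2\delta$, and every $y$ is within $\delta$ of $fg(y)$, so $f\circ g\in\QI_{(1,2\delta,\delta)}(Y,Y)$.

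\emph{Main obstacle.} The delicate point is the attainment issue. The Hausdorff distance is defined with open neighbourhoods $B_d(E,r)$ and an infimum, so $\dist_\Haus\le\delta$ only yields points of $fX$ within $\delta+\eta$ of $y$ for every $\eta>0$, not necessarily within exactly $\delta$. The estimates above need near-preimages at distance exactly $\le\delta$, which the argument implicitly assumes (as in the composition lemma). I would first try to adopt that same convention, treating $\dist_\Haus\le\delta$ as giving points within $\delta$. If this is not acceptable, I would run the argument with $\delta+\eta$ and observe that all constants are continuous in $\delta$. However, getting exact constants then requires attainment. This holds, for instance, when $fX$ is finite, which is the case for the nets used in the model. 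So I would state the result under that convention.
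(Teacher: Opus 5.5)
Your argument is correct and is essentially the paper's proof: pick $g(y)$ with $d_Y(f(g(y)),y)\le\delta$, derive both quasi-isometry bounds from the triangle inequality, and use Lemma~\ref{lemma:QI-radius} to get $d_X(g(f(x)),x)\le L\varepsilon$; your explicit choice $g(y)\in f^{-1}(y)$ for $y\in fX$ is exactly what justifies the paper's unexplained step $g_f(f(x))\in f^{-1}(f(x))$, without which one only gets $L(\varepsilon+\delta)$ there. Your attainment caveat is a real issue, not pedantry: for the inclusion $f\colon \R\setminus[-1,1]\to\R$ (so $L=1$, $\varepsilon=0$, $\delta=1$) no map $g$ can satisfy $d_Y(f(g(0)),0)\le 1$, so the statement tacitly needs near-preimages within distance exactly $\delta$, which the paper assumes silently and which does hold in the finite model of Section~\ref{sec:stab_model}.
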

\begin{proof}
We define first the map $\Phi$. Let $f\in \QI_{(L,\varepsilon,\delta)}(X,Y)$. We first define a map $g_f \colon Y \to X$ and then show that we may define $\Phi$ by the formula $f\mapsto g_f$.

Let $y\in Y$. Since $\dist_\Haus(fX,Y) \le \delta$, we have that there exists a point $g_f(y)\in X$ for which $d_Y(f(g_f(y)),y))\le \delta$. We show first that $g_f$ is a quasi-isometry.

Let $y,y'\in Y$. Then 
\begin{align*}
d_Y(y,y') &\le d_Y(y,f(g_f(y))) + d_Y(f(g_f(y)), f(g_f(y'))) + d_Y(f(g_f(y')), y') \\
&\le \delta + d_Y(f(g_f(y)), f(g_f(y'))) + \delta \le L d_X(g_f(y),g_f(y')) + \varepsilon + 2\delta.
\end{align*}
Hence 
\[
d_X(g_f(y),g_f(y'))\ge \frac{1}{L} d_Y(y,y') - \frac{(\varepsilon + 2\delta)}{L} \ge \frac{1}{L} d_Y(y,y') - L(\varepsilon + 2\delta).
\]
To the other direction,
\begin{align*}
d_X(g_f(y), g_f(y')) &\le L \left( d_Y(f(g_f(y)), f(g_f(y'))) + \varepsilon\right) \\
&\le L \left( d_Y(f(g_f(y)), y) + d_Y(y,y') + d_Y(y', f(g_f(y'))) + \varepsilon\right) \\
&\le L d_Y(y,y') + L(\varepsilon + 2\delta). 
\end{align*}

Finally, let $x\in X$. Then $g_f(f(x)) \in f^{-1}(f(x))$. Thus, by Lemma \ref{lemma:QI-radius}, $d_X(g_f(f(x),x)\le L \varepsilon$. Hence $\dist_\Haus(g_f(Y),X) \le L\varepsilon$. We have shown that $g_f$ is an $(L,L(\varepsilon+2\delta),L\varepsilon)$-quasi-isometry.

It remains to show that 
we have 
$g_f \circ f \in \QI_{(1,2L\varepsilon,L\varepsilon)}(X,X)$ and $f\circ g_f \in \QI_{(1,2\delta, \delta)}(Y,Y)$. Let $x,x'\in X$. Then, similarly to above,
\begin{align*}
d_X(g_f(f(x)),g_f(f(x'))) &\le d_X(g_f(f(x)), x) + d_X(x,x') + d_X(x',g_f(f(x'))) \\
&\le d_X(x,x') + 2 L \varepsilon
\end{align*}
and
\begin{align*}
d_X(x,x') &\le d_X(x,g_f(f(x))) + d_X(g_f(f(x)),g_f(f(x'))) + d_X(g_f(f(x')), x') \\
&\le d_X(g_f(f(x)), g_f(f(x'))) + 2L \varepsilon. 
\end{align*}
Finally, by Lemma \ref{lemma:QI-radius}, we have, for each $x\in X$, that $d_X(g_f(f(x)), x) \le L \varepsilon$. Thus $g_f\circ f\in \QI_{(1,2L\varepsilon, L \varepsilon)}(X,X)$.
The case of $f\circ g_f \in \QI_{(1,2\delta, \delta)}(Y,Y)$ is similar and left to the interested reader. 

We conclude that we may take $\Phi$ to be the map $f\mapsto g_f$.
\end{proof}

\subsection{Quasi-up-increasing mapping  implies quasi-isometric embedding}

We finish the general discussion on quasi-isometric mappings with an observation that quasi-up-increasing mappings are quasi-isometric embeddings.

\begin{proposition}\label{prop:qui_is_QI}
    Let $F\colon X \to Y$ be $(t\mapsto \omega_1(t)+\varepsilon_1;t\mapsto \omega_2(t)+\varepsilon_2)$-quasi-up-increasing. For $i=1,2$, let $\Tilde{\omega}_i\colon (0,\infty) \to  \R$ be defined by the first derivatives of $\omega_i$ in the following way: 
    \[\Tilde{\omega}_i(r)=\begin{cases}
       D\omega_i(r), \text{ if } D\omega_i(r)\geq 1 \\
       D\omega_i(r)^{-1}, \text{ if } 0<D\omega_i(r)\leq 1 
    \end{cases}.
    \] Then for every $r_1> 0$ and $r_2>0$, the mapping $F\colon X \to Y$ is $(L_{r_1,r_2},\varepsilon_{r_1,r_2})$-quasi-isometric embedding, where \begin{align*}
        L_{r_1,r_2}&=\max\{\Tilde{\omega}_1(r_1),\Tilde{\omega}_2(r_2)\}, \text{ and } \\
        \varepsilon_{r_1,r_2}&=\max\biggl\{-D\omega_1(r_1)r_1+\omega_1(r_1)+\varepsilon_1, \frac{-D\omega_2(r_2)r_2+\omega_2(r_2)+\varepsilon_2}{D\omega_2(r_2)}\biggr\}.
    \end{align*}
\end{proposition}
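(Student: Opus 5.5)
The plan is to bound a concave function above by one of its tangent lines. Fix $r_1,r_2>0$. Each $\omega_i$ is twice differentiable on $(0,\infty)$ with $D^2\omega_i\le 0$, so $\omega_i$ is concave there. Since $\omega_i$ is continuous at $0$ with $\omega_i(0)=0$, concavity extends to $[0,\infty)$. Concavity gives the tangent line inequality
\[
\omega_i(t)\le \omega_i(r_i)+D\omega_i(r_i)(t-r_i)=D\omega_i(r_i)\,t+\bigl(\omega_i(r_i)-D\omega_i(r_i)r_i\bigr)
\]
for all $t\ge 0$. At $t=0$ this follows by continuity. Note that $c_i:=\omega_i(r_i)-D\omega_i(r_i)r_i\ge \omega_i(0)=0$, again by concavity. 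So the constants appearing in $\varepsilon_{r_1,r_2}$ are nonnegative.

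For the upper bound, take $x_1,x_2\in X$ and apply condition (1) of quasi-up-increasing together with the tangent inequality for $\omega_1$:
\[
d_Y(F(x_1),F(x_2))\le D\omega_1(r_1)\,d_X(x_1,x_2)+c_1+\varepsilon_1.
\]
By definition $D\omega_1(r_1)\le\tilde\omega_1(r_1)\le L_{r_1,r_2}$, because when $D\omega_1(r_1)<1$ its reciprocal exceeds $1\ge D\omega_1(r_1)$. Also $c_1+\varepsilon_1\le\varepsilon_{r_1,r_2}$. This gives the right-hand inequality of an $(L_{r_1,r_2},\varepsilon_{r_1,r_2})$-quasi-isometric embedding. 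We also note $L_{r_1,r_2}\ge 1$, which the definition of a quasi-isometric embedding requires.

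For the lower bound, condition (2) and the tangent inequality for $\omega_2$ give
\[
d_X(x_1,x_2)\le D\omega_2(r_2)\,d_Y(F(x_1),F(x_2))+c_2+\varepsilon_2.
\]
Dividing by $D\omega_2(r_2)>0$ yields
\[
d_Y(F(x_1),F(x_2))\ge \frac{1}{D\omega_2(r_2)}\,d_X(x_1,x_2)-\frac{c_2+\varepsilon_2}{D\omega_2(r_2)}.
\]
The last term is exactly the second entry of the maximum defining $\varepsilon_{r_1,r_2}$. It remains to check $1/D\omega_2(r_2)\ge 1/L_{r_1,r_2}$, i.e. $D\omega_2(r_2)\le L_{r_1,r_2}$. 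This holds because $D\omega_2(r_2)\le\tilde\omega_2(r_2)$, as in the previous step. Replacing the coefficient by the smaller $1/L_{r_1,r_2}$ and the subtracted constant by the larger $\varepsilon_{r_1,r_2}$ only weakens the inequality. This gives the left-hand inequality.

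The only delicate point is the concavity step at the boundary point $0$. Convexity information is only given on $(0,\infty)$. So I would first apply the tangent inequality for $t>0$ and then pass to $t=0$ via continuity of $\omega_i$ at $0$. That continuity is implicit in the definition: $\omega_i$ is defined on $[0,\infty)$ with $\omega_i(0)=0$. If it is not assumed, I would add the remark that a concave increasing function on $(0,\infty)$ has a limit at $0^+$, so only $t=0$, i.e. $x_1=x_2$, needs attention. That case is trivial, since both distances vanish and $\varepsilon_{r_1,r_2}\ge0$.
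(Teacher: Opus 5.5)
Your proof is correct and follows the paper's route: you bound each concave $\omega_i$ above by its tangent line at $r_i$, substitute this into conditions (1) and (2), divide the second inequality by $D\omega_2(r_2)$, and enlarge the constants using $D\omega_i(r_i)\le\tilde\omega_i(r_i)\le L_{r_1,r_2}$, checking this step together with $L_{r_1,r_2}\ge 1$ and $\varepsilon_{r_1,r_2}\ge 0$ more carefully than the paper does. One small correction to your closing remark: for $\omega_2$ the case $t=0$ means $F(x_1)=F(x_2)$, not $x_1=x_2$, so $d_X(x_1,x_2)$ need not vanish there; however, no continuity at $0$ is needed, because the tangent inequality at $t=0$ just reads $0\le c_i$, which follows from $0\le\omega_i(t)\le c_i+D\omega_i(r_i)\,t$ for $t>0$ by letting $t\to 0^+$.
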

\begin{proof}
    We can approximate both $\omega_1\colon [0,\infty)\to[0,\infty)$, and $\omega_2\colon [0,\infty)\to[0,\infty)$ up by tangents. That is, for every $r_1,r_2>0$ we get that
    \begin{align*}
    \omega_1(t)+\varepsilon_1&\leq D\omega_1(r_1)t-D\omega_1(r_1)r_1+\omega_1(r_1)+\varepsilon_1 , \text{ and } \\
     \omega_2(t)+\varepsilon_2&\leq D\omega_2(r_2)t-D\omega_2(r_2)r_2+\omega_2(r_2)+\varepsilon_2. 
    \end{align*}
    Since $\omega_1(t),\omega_2(t)\geq 0$ for all $t\in[0,\to\infty)$, we have that $-D\omega_1(r_1)r_1+\omega_1(r_1)+\varepsilon_1>0$ and $-D\omega_2(r_2)r_2+\omega_1(r_2)+\varepsilon_2>0$.
    By the definition of a quasi-up-increasing function and previous estimates we get that 
    \begin{align*}
        d_Y(F(x_1),F(x_2))&\leq D\omega_1(r_1)(d_X(x_1,x_2))-D\omega_1(r_1)r_1+\omega_1(r_1)+\varepsilon_1, 
         \text{ and } \\
     d_X(x_1,x_2)&\leq D\omega_2(r_2) d_Y(F(x_1),F(x_2))-D\omega_2(r_2)r_2+\omega_2(r_2)+\varepsilon_2. 
    \end{align*}
    Furthermore, it implies that 
    \begin{align*}
         \frac{1}{D\omega_2(r_2)}d_X(x_1,x_2)-(\frac{-D\omega_2(r_2)r_2+\omega_2(r_2)+\varepsilon_2}{D\omega_2(r_2)}) \leq d_Y(F(x_1),F(x_2))\\
         \leq D\omega_1(r_1)(d_X(x_1,x_2))-D\omega_1(r_1)r_1+\omega_1(r_1)+\varepsilon_1.
    \end{align*}
    Now by choosing $L_{r_1,r_2}$ and $\varepsilon_{r_1,r_2}$ as in the statement, we obtain that $F\colon X\to Y$ is $(L_{r_1,r_2},\varepsilon_{r_1,r_2})$-quasi-isometric embedding.
\end{proof}

From above Proposition \ref{prop:qui_is_QI}, and
from Proposition \ref{prop:radon_hölder} and Corollary \ref{prop:conductivity_quasi-Lip-log}, we obtained following corollaries.

\begin{corollary}\label{cor:radon_is_QI}
    The Radon transform restricted to a closed ball is a quasi-isometry embedding.
\end{corollary}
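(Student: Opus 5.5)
The corollary follows by feeding Proposition \ref{prop:radon_hölder} into Proposition \ref{prop:qui_is_QI}. Fix $n\ge 2$, $s>0$ and $K>0$. By Proposition \ref{prop:radon_hölder}, the restriction $\mathcal R\colon B_s(K)\to L^2(Z)$, with the $L^2$ metrics on both sides, is $(t\mapsto C_1t;\,t\mapsto C_2t^p)$-quasi-up-increasing with $0<p<1$ and $\varepsilon_1=\varepsilon_2=0$. So $\omega_1(t)=C_1t$ and $\omega_2(t)=C_2t^p$. Proposition \ref{prop:qui_is_QI} then states that for every choice of $r_1,r_2>0$ the map $\mathcal R$ is an $(L_{r_1,r_2},\varepsilon_{r_1,r_2})$-quasi-isometric embedding. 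It therefore suffices to fix some $r_1,r_2>0$ and check that the resulting constants are finite, with $L_{r_1,r_2}\ge 1$ and $\varepsilon_{r_1,r_2}\ge 0$.

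I will compute these constants explicitly.

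\begin{itemize}
\item For $\omega_1$: we have $D\omega_1\equiv C_1$, so $\tilde\omega_1(r_1)=\max\{C_1,C_1^{-1}\}$. The tangent-line offset is $-D\omega_1(r_1)r_1+\omega_1(r_1)=0$, so the first term in $\varepsilon_{r_1,r_2}$ vanishes.
\item For $\omega_2$: we have $D\omega_2(r_2)=pC_2r_2^{p-1}>0$, so $\tilde\omega_2(r_2)=\max\{pC_2r_2^{p-1},(pC_2r_2^{p-1})^{-1}\}<\infty$.
\item The second term in $\varepsilon_{r_1,r_2}$ is $(1-p)C_2r_2^p/(pC_2r_2^{p-1})=\frac{1-p}{p}\,r_2$. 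This is finite and nonnegative.
\end{itemize}

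By construction each $\tilde\omega_i\ge 1$, so $L_{r_1,r_2}\ge 1$, and $\varepsilon_{r_1,r_2}\ge 0$. This meets the definition of an $(L,\varepsilon)$-quasi-isometric embedding. Choosing, say, $r_1=r_2=1$ gives concrete constants.

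The only subtle point is a technical requirement in the definition of quasi-up-increasing, namely twice differentiability on $(0,\infty)$ with positive first and nonpositive second derivatives. For $C_2t^p$ with $0<p<1$ this holds on $(0,\infty)$, which is exactly the domain on which Proposition \ref{prop:qui_is_QI} is used. I therefore expect no real obstacle beyond confirming that both derivatives are evaluated at strictly positive radii. I will also remark that the statement concerns an \emph{embedding}: coarse surjectivity onto $L^2(Z)$ is not claimed. If one wants a quasi-isometry, it holds onto the image $\mathcal R(B_s(K))$ with $\delta=0$.
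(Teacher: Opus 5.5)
Your proposal is correct and is exactly the paper's argument: the corollary comes from feeding the $(t\mapsto C_1t;\,t\mapsto C_2t^p)$-quasi-up-increasing property of $\mathcal R$ restricted to $B_s(K)$ into Proposition~\ref{prop:qui_is_QI}. Your explicit constants, $\varepsilon_{r_1,r_2}=\frac{1-p}{p}\,r_2$ and $L_{r_1,r_2}=\max\{C_1,C_1^{-1},pC_2r_2^{p-1},(pC_2r_2^{p-1})^{-1}\}$, correctly instantiate that proposition; the paper leaves this computation implicit.
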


\begin{corollary}\label{cor:calderon_is_QI}
        The restricted Calderón's forward map is a quasi-isometry embedding.
\end{corollary}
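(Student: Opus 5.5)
The plan is to derive Corollary \ref{cor:calderon_is_QI} directly from Corollary \ref{prop:conductivity_quasi-Lip-log} together with Proposition \ref{prop:qui_is_QI}. The same argument, applied to Proposition \ref{prop:radon_hölder}, gives Corollary \ref{cor:radon_is_QI}.

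By Corollary \ref{prop:conductivity_quasi-Lip-log}, the restriction $\mathcal F|_{B^{n,s}(K,\Omega)}$ is $(t\mapsto C_1t;t\mapsto\omega(t))$-quasi-up-increasing with $\varepsilon_1=\varepsilon_2=0$. The metrics are the $L^2$ (or $L^\infty$) metric on the domain and the operator norm metric on the target. We fix any $r_1,r_2>0$ and apply Proposition \ref{prop:qui_is_QI}, which yields an $(L_{r_1,r_2},\varepsilon_{r_1,r_2})$-quasi-isometric embedding.

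Two small points need care.

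First, $L_{r_1,r_2}$ must be at least $1$, as the definition of a quasi-isometric embedding requires. This is automatic, since $\tilde\omega_i\ge1$ by construction.

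Second, Proposition \ref{prop:qui_is_QI} needs the $\omega_i$ to satisfy the structural hypotheses: positive first derivative, nonpositive second derivative, and $\omega_i(0)=0$. For $\omega_1(t)=C_1t$ these are trivial. For $\omega(t)=C_2(\ln(1+t^{-1}))^{-p}$ one computes
\[
D\omega(t)=C_2p\,(\ln(1+t^{-1}))^{-p-1}\,\frac{1}{t(1+t)}>0 .
\]
Concavity is the step I expect to be the main obstacle. If the second derivative is not globally nonpositive, I would fall back on the fact that the definition of quasi-up-increasing is asserted in Corollary \ref{prop:conductivity_quasi-Lip-log} itself, and take it as given.

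In any case, the tangent-line bound used in the proof of Proposition \ref{prop:qui_is_QI} only needs concavity. Also, the intercept $-D\omega(r)r+\omega(r)$ is nonnegative because $\omega(0)=0$ and $\omega$ is concave. Hence $\varepsilon_{r_1,r_2}$ is a finite nonnegative constant, and the conclusion follows.

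For the Radon transform we use $\omega_2(t)=C_2t^p$ with $0<p<1$, which is concave. Although its derivative blows up at $0$, we only evaluate it at $r_2>0$, so the same argument applies verbatim.
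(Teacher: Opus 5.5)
Your proposal is correct and is the paper's own argument: the paper obtains Corollary~\ref{cor:calderon_is_QI} simply by feeding the quasi-up-increasing property from Corollary~\ref{prop:conductivity_quasi-Lip-log} into Proposition~\ref{prop:qui_is_QI}. Your concavity worry is also unfounded, since a direct computation gives $D^2\omega\le 0$ if and only if $(1+2t)\ln(1+t^{-1})\ge 1+p$, and the left-hand side is always at least $2>1+p$ by the elementary inequality $\ln(1+x)\ge 2x/(2+x)$ for $x\ge 0$.
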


Furthermore, when having a quasi-up-increasing mapping $F\colon X\to F(X)$, i.e., considering the surjection, we have a quasi-isometry with $\delta=0$. This is reasonable in practical applications, but for the completeness of the theory, the density constant $\delta$ is carried in the upcoming results.


\section{Stability on persistent homology}\label{sec:per_hom_stab}

The persistent homology is a tool to study the structure of data in different scales, revealing the most prominent shapes of the data. 
We will show how quasi-isometry carries the information of the shapes, and furthermore, how to observe these. 
We aim to present a theory such that one can read it without the broad prerequisites of persistent homology. If one would like to get more details, we refer to \cite{Edelsbrunner_shortcourse,Edelsbrunner_survey,Edelsbunner,roadmap_Otter} in the general theory of persistent homology.

\subsection{Rips homology of a metric space}

\newcommand{\diam}{\mathrm{diam}}

Let $(X,d_X)$ be a metric space. In this paper, we consider Rips complexes $\Rips(X,s)$ at scale $s\in\R$ defined as follows.

A map $\sigma \colon \{0,1,\ldots, p\}\to X$ is called a $p$-simplex and, for $i=0,1,\ldots, p$, points $x_i=\sigma(i)$ are called the \emph{vertices of $\sigma$}; we also denote $\sigma = [x_0,\ldots, x_p]$. We denote $\Sigma_p(X)$ the family of all $p$-simplices $\{0,\ldots, p\} \to X$. For $s\ge 0$, we also denote
\[
\Sigma_p(X,s) = \{ [x_0,\ldots, x_p] \in \Sigma_p(X) \colon \diam \{x_0,\ldots, x_p\} \le s\}
\]
the family of all $p$-simplices having diameter at most $s$. Clearly, $\Sigma_p(X,s)=\emptyset$ for $s<0$ and $\Sigma_p(X,0)$ consists of constant maps.

For each $p\in \Z$ and $s\ge 0$, let $\Rips_p(X,s)$ be the free $\Z_2$-vector space having $\Sigma_p(X,s)$ as a basis, that is, $\Rips_p(X,s) = \bigoplus_{\Sigma_p(X,s)} \Z_2$. The boundary maps $\partial_p \colon \Rips_p(X,s) \to \Rips_{p-1}(X,s)$ are given by the linear extension of the usual formula 
\[
\partial_p [x_0,\ldots, x_p] = \sum_{i=0}^p (-1)^i [x_0,\ldots, x_{i-1}, x_{i+1},\ldots, x_p]
\]
for simplices $[x_0,\ldots, x_p]\in \Sigma_p(X,s)$. 

With these boundary maps the sequence $\Rips(X,s) = (\Rips_p(X,s), \partial_p)_{p\in \Z}$ is a complex of vector spaces; see e.g. Rotman \cite{Rotman} for terminology. We call $\Rips(X,s)$ the \emph{Rips complex of $X$ at scale $s$}. The 
\emph{$p$th Rips homology group of $X$ at scale $s$} is 
\[
H_p(\Rips(X,s))=\frac{\ker(\partial_p \colon \Rips_p(X,s) \to \Rips_{p-1}(X,s))}
{\im(\partial_{p+1} \colon \Rips_{p+1}(X,s) \to \Rips_p(X,s))};
\]
recall that $\im \partial_{p+1} \subset \ker \partial_p$, since $\partial_p \circ \partial_{p+1}=0$.

\subsection{Induced maps in persistent homology}
\label{sec:induced-maps}

For $s\le t$, we have the inclusion $\Sigma_p(X,s) \subset \Sigma_p(X,t)$ for simplices and hence $\Rips(X,s) \subset \Rips(X,t)$. Further, inclusion maps $\iota_X^{t,s}\colon \Rips(X,s)\hookrightarrow \Rips(X,t)$ satisfy the composition law $\iota_X^{t,s} = \iota_X^{t,r} \circ \iota_X^{r,s}$ for $s\le r \le t$. These inclusion maps descend to homology and induce linear maps 
\[
\phi_X^{t,s}\colon H_p(\Rips(X,s))\to H_p(\Rips(X,t))
\]
for $s\le t$, which also satisfy the composition law $\phi_X^{t,s}=\phi_X^{t,r}\circ \phi_X^{r,s}$ for all $s\leq r\leq t$.

Let $L\ge 1$ and $\varepsilon\ge 0$. We show that an $(L,\varepsilon)$-quasi-isometric embedding $f\colon X\to Y$ induces a family of linear maps $f_* \colon H_p(\Rips(X,s)) \to H_p(\Rips(Y,Ls+\varepsilon))$. Due to the nature of the statement, we set $\varphi_{(L,\varepsilon)} \colon \R \to \R$ to be the linear map $t\mapsto Lt+\varepsilon$.
The following lemma states that $f\colon X\to Y$ induces, for each $s\ge 0$, the linear map $f_\# \colon \Rips_p(X,s) \to \Rips_p(Y,\varphi_{(L,\varepsilon)}(s))$, satisfying $[x_0,\ldots, x_p] \mapsto [f(x_0),\ldots, f(x_p)]$, is well-defined and 
hence yields 
a linear map $f_*\colon H_p(\Rips(X,s)) \to H_p(\Rips(Y,\varphi_{(L,\varepsilon)}(s)))$.

\begin{lemma}
Let $f\colon X\to Y$ be an $(L,\varepsilon)$-quasi-isometric embedding. Then there exists a well-defined linear map $f_\# \colon \Rips_p(X,s) \to \Rips_p(Y,\varphi_{(L,\varepsilon)}(s))$ satisfying $f_\#[x_0,\ldots, x_p] = [f(x_0),\ldots, f(x_p)]$ for each $[x_0,\ldots, x_p]\in \Sigma_p(X,s)$. In addition, the linear map $f_\#$ descends as a well-defined linear map 
\[
f_* \colon H_p(\Rips(X,s))\to H_p(\Rips(Y,\varphi_{(L,\varepsilon)}(s))), \quad [c] \mapsto [f_\# c].
\]
\end{lemma}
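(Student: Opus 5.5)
The plan is to define $f_\#$ on basis elements and extend linearly, using that $\Rips_p(X,s)$ is the free $\Z_2$-vector space on $\Sigma_p(X,s)$. Two checks are needed: the image simplex lies in the right basis set, and $f_\#$ is a chain map.

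For the first check, let $[x_0,\ldots,x_p]\in\Sigma_p(X,s)$, so $d_X(x_i,x_j)\le s$ for all $i,j$. The upper quasi-isometry inequality gives
\[
d_Y(f(x_i),f(x_j))\le L\,d_X(x_i,x_j)+\varepsilon\le Ls+\varepsilon=\varphi_{(L,\varepsilon)}(s).
\]
Hence $\diam\{f(x_0),\ldots,f(x_p)\}\le\varphi_{(L,\varepsilon)}(s)$, and the composition $f\circ\sigma\colon\{0,\ldots,p\}\to Y$ is an element of $\Sigma_p(Y,\varphi_{(L,\varepsilon)}(s))$.

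Two features of the setup make this clean. Simplices are maps rather than vertex sets, so a non-injective $f$ produces no degeneracy issue: $f\circ\sigma$ is simply another (possibly degenerate) simplex. The case $s<0$ is vacuous because then $\Sigma_p(X,s)=\emptyset$. Since $\Sigma_p(X,s)$ is a basis, the assignment $\sigma\mapsto f\circ\sigma$ extends uniquely to a linear map $f_\#\colon\Rips_p(X,s)\to\Rips_p(Y,\varphi_{(L,\varepsilon)}(s))$.

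For the second check, I verify $\partial_p f_\#=f_\#\partial_p$ on basis simplices. Deleting the $i$th vertex commutes with postcomposing by $f$, so
\[
f_\#\partial_p[x_0,\ldots,x_p]=\sum_i(-1)^i[f(x_0),\ldots,\widehat{f(x_i)},\ldots,f(x_p)]=\partial_p f_\#[x_0,\ldots,x_p].
\]
Each face of $\sigma$ has diameter at most $s$, so both sides are defined in the target complex.

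The chain map property then gives $f_\#(\ker\partial_p)\subset\ker\partial_p$ and $f_\#(\im\partial_{p+1})\subset\im\partial_{p+1}$, because $f_\#\partial_{p+1}c=\partial_{p+1}f_\#c$. Therefore $[c]\mapsto[f_\# c]$ is independent of the chosen representative and is linear, which gives $f_*$.

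No step here is a real obstacle. The only point needing care is the diameter estimate, which is exactly where the additive constant $\varepsilon$ forces the scale shift to $\varphi_{(L,\varepsilon)}(s)$ instead of $Ls$.
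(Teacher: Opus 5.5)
Your proposal is correct and follows the paper's approach. The paper's proof also derives $f_\#$ from the bound $\diam\{f(x_0),\ldots,f(x_p)\}\le\varphi_{(L,\varepsilon)}(s)$ together with the fact that $\Sigma_p(X,s)$ is a basis. For the descent to $f_*$ the paper only cites the standard chain-map argument (Rotman), whereas you verify $\partial f_\#=f_\#\partial$ explicitly.
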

\begin{proof}
The first claim follows immediately from the observation that 
\[
\diam \{ f(x_0),\ldots, f(x_p)\} \le \varphi_{(L,\varepsilon)}(\diam \{ x_0,\ldots, x_p\}) \le \varphi_{(L,\varepsilon)}(s)
\]
and the fact that $\Sigma_p(X,s)$ is a basis of $\Rips_p(X,s)$. The second claim is standard; see e.g.~Rotman \cite{Rotman} for discussion.
\end{proof}

The reader may have already noticed that the induced map of a composition of quasi-isometries may not be formally the same map as the composition of induced maps. The maps $(g_f)_* \circ f_*$ and $(g_f\circ f)_*$, however, agree after suitable post-composition with maps $\phi_X^{t,s}$. For the discussion, we fix the following functions, called \emph{scale functions of $f$}.

Let $\Phi \colon \QI_{(L,\varepsilon,\delta)}(X,Y) \to \QI_{(L,L(\varepsilon+2\delta),L\varepsilon)}(Y,X)$ be a map as in Proposition \ref{prop:quasi-inverse} and for each $f\in \QI_{(L,\varepsilon,\delta)}(X,Y)$ denote $g_f = \Phi(f)$. We denote 
\[
\varphi_f = \varphi_{(L,\varepsilon)} \colon \R\to \R, \ 
\vartheta_f = \varphi_{(L,L(\varepsilon+2\delta))} \colon \R\to \R, \text{ and } \alpha_f = \varphi_{(1,2L\varepsilon)} \colon \R \to \R.
\]
Note that $\vartheta_f(\varphi_f(s)) \ge \alpha_f(s)$ for $s\ge 0$.

\begin{lemma}
\label{lemma:induced-map-composition}
Let $f\in \QI_{(L,\varepsilon,\delta)}(X,Y)$ be a quasi-isometry and let $s\ge 0$. Then 
$(g_f)_* \circ f_* \colon H_p(\Rips(X,s)) \to H_p(\Rips(X,\vartheta_f(\varphi_f(s)))$ and $(g_f \circ f)_* \colon H_p(\Rips(X,s)) \to H_p(\Rips(X,\alpha_f(s)))$. Moreover, 
\[
\phi_X^{\vartheta_f(\varphi_f(s)), \alpha_f(s)} \circ (g_f\circ f)_* = (g_f)_* \circ f_*
\]
as maps $H_p(\Rips(X,s)) \to H_p(\Rips(X,\vartheta_f(\varphi_f(s))))$.
\end{lemma}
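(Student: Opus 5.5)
The plan is to prove the identity already at the chain level and then pass to homology. There are three things to check.

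First, the domains and codomains. Since $f$ is an $(L,\varepsilon)$-quasi-isometric embedding, the lemma on induced maps gives
\[
f_*\colon H_p(\Rips(X,s))\to H_p(\Rips(Y,\varphi_f(s))).
\]
By Proposition \ref{prop:quasi-inverse}, $g_f$ is an $(L,L(\varepsilon+2\delta))$-quasi-isometric embedding, so the same lemma gives
\[
(g_f)_*\colon H_p(\Rips(Y,\varphi_f(s)))\to H_p(\Rips(X,\vartheta_f(\varphi_f(s)))).
\]
Also by Proposition \ref{prop:quasi-inverse}, $g_f\circ f\in \QI_{(1,2L\varepsilon,L\varepsilon)}(X,X)$, which gives
\[
(g_f\circ f)_*\colon H_p(\Rips(X,s))\to H_p(\Rips(X,\alpha_f(s))).
\]
Because $\vartheta_f(\varphi_f(s))\ge \alpha_f(s)$ (noted just before the statement), the structure map $\phi_X^{\vartheta_f(\varphi_f(s)),\alpha_f(s)}$ is defined, so both sides of the claimed identity are maps $H_p(\Rips(X,s))\to H_p(\Rips(X,\vartheta_f(\varphi_f(s))))$.

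Second, the chain-level identity. Take a simplex $[x_0,\ldots,x_p]\in\Sigma_p(X,s)$. Then
\[
(g_f)_\#f_\#[x_0,\ldots,x_p]=[g_f(f(x_0)),\ldots,g_f(f(x_p))]=(g_f\circ f)_\#[x_0,\ldots,x_p].
\]
This simplex has diameter at most $\alpha_f(s)$, so it lies in $\Sigma_p(X,\alpha_f(s))\subset\Sigma_p(X,\vartheta_f(\varphi_f(s)))$. The inclusion $\iota_X^{\vartheta_f(\varphi_f(s)),\alpha_f(s)}$ maps each simplex to itself. Hence on basis elements
\[
\iota_X^{\vartheta_f(\varphi_f(s)),\alpha_f(s)}\circ (g_f\circ f)_\# = (g_f)_\#\circ f_\#,
\]
and by linearity this holds on all of $\Rips_p(X,s)$. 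One point needs care here: $(g_f)_\#$ is defined on $\Rips_p(Y,\varphi_f(s))$, and $f_\#$ lands there, so the composition makes sense.

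Third, pass to homology. All maps involved are chain maps, and the induced map of a composition of chain maps is the composition of the induced maps. Applying homology to the chain identity therefore gives
\[
\phi_X^{\vartheta_f(\varphi_f(s)),\alpha_f(s)}\circ(g_f\circ f)_*=(g_f)_*\circ f_*,
\]
because $\phi_X$ is by definition the map induced by $\iota_X$.

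None of these steps is a serious obstacle. The only thing to watch is the bookkeeping of scales: an induced map is labeled by its codomain scale, so one must confirm that both composites land in the same Rips complex before comparing them.
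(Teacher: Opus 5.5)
Your proposal is correct and follows the paper's proof. The paper takes the scale bookkeeping from Proposition~\ref{prop:quasi-inverse}, checks on simplices that $\iota_X^{\vartheta_f(\varphi_f(s)),\alpha_f(s)}\circ(g_f\circ f)_\#[x_0,\ldots,x_p]=(g_f)_\#f_\#[x_0,\ldots,x_p]$, and then passes to homology; you additionally spell out the inequality $\vartheta_f(\varphi_f(s))\ge\alpha_f(s)$ and the functoriality of induced maps, which the paper leaves implicit.
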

\begin{proof}
The first part of the claim follows from Proposition \ref{prop:quasi-inverse}. For the second claim, it suffices to observe that 
\begin{align*}
\iota_X^{\vartheta_f(\varphi_f(s)), \alpha_f(s)}\circ(g_f \circ f)_\# [x_0,\ldots, x_p] 
&= [g_f(f(x_0)), \ldots, g_f(f(x_p))] \\
&=(g_f)_\# f_\# [x_0,\ldots, x_p]
\end{align*}
for $[x_0,\ldots, x_p]\in \Sigma_p(X,s)$.
\end{proof}

Since the composition $g_f\circ f$ has distance $L\varepsilon$ to the identity, we have also that $(g_f \circ f)_*$ agrees, for each $s\ge 0$, with $\phi_X^{\alpha_f(s),s}$. We formalize this as follows. 
\begin{lemma}
\label{lemma:QI-yields-inclusion}
Let $f\in \QI_{(L,\varepsilon,\delta)}(X,Y)$ and $s\ge 0$. Then, for $p\ge 0$,
\[
(g_f \circ f)_* = \phi_X^{\alpha_f(s),s}
\]
as maps $H_p(\Rips(X,s)) \to H_p(\Rips(X,s+2L\varepsilon))$. 
\end{lemma}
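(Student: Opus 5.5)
The plan is a chain-homotopy argument: show that the chain maps $(g_f\circ f)_\#$ and $\iota_X^{\alpha_f(s),s}$ are chain homotopic as maps $\Rips(X,s)\to \Rips(X,\alpha_f(s))$. Then they induce the same map on $H_p$. Write $h = g_f\circ f\colon X\to X$. By Proposition \ref{prop:quasi-inverse}, $h\in \QI_{(1,2L\varepsilon,L\varepsilon)}(X,X)$ and $d_X(h(x),x)\le L\varepsilon$ for every $x\in X$. In particular $h_\#$ maps $\Rips_p(X,s)$ into $\Rips_p(X,s+2L\varepsilon)=\Rips_p(X,\alpha_f(s))$, so both maps have the stated target.

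\textbf{Homotopy.} I would use the standard prism operator. For a simplex $\sigma=[x_0,\ldots,x_p]\in\Sigma_p(X,s)$ set
\[
P\sigma=\sum_{i=0}^p (-1)^i [x_0,\ldots,x_i,h(x_i),\ldots,h(x_p)] \in \Rips_{p+1}(X,\alpha_f(s)),
\]
and extend linearly. Over $\Z_2$ the signs are irrelevant anyway.

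\textbf{Well-definedness.} The key geometric step is that each simplex $[x_0,\ldots,x_i,h(x_i),\ldots,h(x_p)]$ has diameter at most $s+2L\varepsilon$. There are three kinds of pairs to check:
\begin{itemize}
\item $d(x_j,x_k)\le s$;
\item $d(h(x_j),h(x_k))\le d(x_j,x_k)+2L\varepsilon\le s+2L\varepsilon$, by the $(1,2L\varepsilon)$ upper bound for $h$;
\item mixed pairs $x_j,h(x_k)$, where $d(x_j,h(x_k))\le d(x_j,x_k)+d(x_k,h(x_k))\le s+L\varepsilon\le s+2L\varepsilon$.
\end{itemize}
So $P$ is a well-defined linear map $\Rips_p(X,s)\to\Rips_{p+1}(X,\alpha_f(s))$. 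Note that the bound $d(x,h(x))\le L\varepsilon$ comes from Lemma \ref{lemma:QI-radius}, as recorded in Proposition \ref{prop:quasi-inverse}.

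\textbf{Homotopy identity.} Next I would verify
\[
\partial P + P\partial = h_\# - \iota_X^{\alpha_f(s),s}.
\]
This is the same telescoping computation as for the prism operator in singular homology (e.g.\ Hatcher's proof of homotopy invariance, or Rotman), and it is purely combinatorial. The terms in which a vertex is dropped from the interior cancel against the terms of $P\partial\sigma$, and the two extreme terms give $h_\#\sigma$ and $\sigma$. Degenerate simplices (repeated vertices) are legitimate basis elements here, since simplices are maps $\{0,\ldots,p\}\to X$, so no special care is needed for them. Applying the identity to a cycle $c$ gives $h_\#c-c=\partial Pc$, which is a boundary in $\Rips(X,\alpha_f(s))$. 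Hence $(g_f\circ f)_*[c]=\phi_X^{\alpha_f(s),s}[c]$.

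The main point needing care is the diameter estimate for the prism simplices, which is where the pointwise closeness $d(h(x),x)\le L\varepsilon$ is essential. The boundary computation itself is routine.
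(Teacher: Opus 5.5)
Your proposal is correct and is essentially the paper's proof: it uses the same prism operator $P[x_0,\ldots,x_p]=\sum_{i}(-1)^i[x_0,\ldots,x_i,(g_f\circ f)(x_i),\ldots,(g_f\circ f)(x_p)]$, the same three-case bound $s+2L\varepsilon$ on the diameter (relying on $d_X((g_f\circ f)(x),x)\le L\varepsilon$ from Proposition~\ref{prop:quasi-inverse}), and the standard chain-homotopy identity. Your remarks on the telescoping computation, and on degenerate simplices being genuine basis elements of this ordered complex, spell out exactly what the paper leaves to its reference to Rotman.
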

\begin{proof}
Indeed, $\alpha_f \colon s \mapsto s+ 2L\varepsilon$. 

We observe first that, for each $[x_0,\ldots, x_p]\in \Sigma_p(X,s)$ and $i \in \{0,\ldots,p\}$, we have that 
\begin{equation}
\label{eq:diameter-change}
\diam \{x_0,\ldots, x_i, (g_f\circ f)(x_i),\ldots, (g_f \circ f)(x_p)\} 
\le \diam \{ x_0, \ldots, x_p\} + 2L\varepsilon.
\end{equation}
This can be verified considering the following three cases.  If $0 \le j < r \le i$, we have that $d_X(x_j,x_r) \le \diam \{ x_0,\dots, x_p\}$. For $0 \le j\le i < r\le p$, we have that $d_X(x_j,(g_f\circ f)(x_r)) \le d_X(x_j,x_r)+d_X(x_r,(g_f\circ f)(x_r)) \le \diam \{x_0,\ldots, x_p\} + L\varepsilon$. Finally, for $i \le j < r\le p$, $d_X((g_f\circ f)(x_i), (g_f\circ f)(x_r)) \le d_X(x_i,x_r) + 2L\varepsilon$. Thus \eqref{eq:diameter-change} holds.

Let now $P \colon \Rips_p(X,s) \to \Rips_{p+1}(X,s+2L\varepsilon)$ be the linear map satisfying 
\[
P[x_0,\ldots, x_p] = \sum_{i=0}^p (-1)^i [x_0,\ldots, x_i, (g_f\circ f)(x_i),\ldots, (g_f\circ f)(x_p)].
\]
By \eqref{eq:diameter-change}, $P$ is well-defined. It is now standard to check that $P$ is a chain homotopy operator from $(g_f \circ f)_\#$ to $\iota_X^{s+2L\varepsilon, s}$; see e.g.~Rotman \cite{Rotman} for terminology and discussion. The claim follows.
\end{proof}

As an immediate consequence of these two observations, we have the following corollary. 
\begin{corollary}
\label{cor:QI-composition}
Let $f\in \QI_{(L,\varepsilon,\delta)}(X,Y)$. Then
\[
(g_f)_* \circ f_* = \phi_X^{\vartheta_f(\varphi_f(s)),s} \colon H_p(\Rips(X,s)) \to 
H_p(\Rips(X,\vartheta_f(\varphi_f(s)))).
\]
In addition,
\[
f_* \circ (g_f)_* = \phi_Y^{\varphi_f(\vartheta_f(s)),s} \colon H_p(\Rips(Y,s)) \to H_p(\Rips(Y,\varphi_f(\vartheta_f(s)))).
\]
\end{corollary}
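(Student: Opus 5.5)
The first identity follows by combining Lemma \ref{lemma:induced-map-composition} with Lemma \ref{lemma:QI-yields-inclusion} and then using the composition law for the maps $\phi_X$. Fix $s\ge 0$ and $p$. Lemma \ref{lemma:induced-map-composition} gives
\[
(g_f)_*\circ f_* = \phi_X^{\vartheta_f(\varphi_f(s)),\alpha_f(s)}\circ (g_f\circ f)_* .
\]
Lemma \ref{lemma:QI-yields-inclusion} identifies $(g_f\circ f)_*$ with $\phi_X^{\alpha_f(s),s}$. Since $\vartheta_f(\varphi_f(s))\ge \alpha_f(s)\ge s$, the composition law $\phi_X^{t,s}=\phi_X^{t,r}\circ\phi_X^{r,s}$ yields $(g_f)_*\circ f_*=\phi_X^{\vartheta_f(\varphi_f(s)),s}$. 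The inequality $\vartheta_f(\varphi_f(s))\ge\alpha_f(s)$ is noted before Lemma \ref{lemma:induced-map-composition}. Explicitly, $L(Ls+\varepsilon)+L(\varepsilon+2\delta)\ge s+2L\varepsilon$ because $L\ge 1$.

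The second identity runs the same argument with the roles of $X$ and $Y$ exchanged, using $f\circ g_f\in \QI_{(1,2\delta,\delta)}(Y,Y)$ and $d_Y(f(g_f(y)),y)\le\delta$ from Proposition \ref{prop:quasi-inverse}. First, $(g_f)_\#$ maps $\Rips_p(Y,s)$ into $\Rips_p(X,\vartheta_f(s))$, and $f_\#$ then maps into $\Rips_p(Y,\varphi_f(\vartheta_f(s)))$. The composite $(f\circ g_f)_\#$ already lands in $\Rips_p(Y,s+2\delta)$ because $f\circ g_f$ is a $(1,2\delta)$-quasi-isometric embedding. So, as in Lemma \ref{lemma:induced-map-composition},
\[
f_*\circ (g_f)_*=\phi_Y^{\varphi_f(\vartheta_f(s)),s+2\delta}\circ (f\circ g_f)_* .
\]
This holds at chain level since both sides send $[y_0,\dots,y_p]$ to $[f g_f(y_0),\dots,f g_f(y_p)]$. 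The required scale inequality is $\varphi_f(\vartheta_f(s))=L^2s+L^2(\varepsilon+2\delta)+\varepsilon\ge s+2\delta$, which holds since $L\ge1$.

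Next I show $(f\circ g_f)_*=\phi_Y^{s+2\delta,s}$ by repeating the proof of Lemma \ref{lemma:QI-yields-inclusion} with $h=f\circ g_f$ in place of $g_f\circ f$. The diameter estimate \eqref{eq:diameter-change} becomes
\[
\diam\{y_0,\dots,y_i,h(y_i),\dots,h(y_p)\}\le \diam\{y_0,\dots,y_p\}+2\delta .
\]
The three cases are the same: pairs of original points; mixed pairs, bounded via $d_Y(y_r,h(y_r))\le\delta$; and pairs of images, bounded by the $(1,2\delta)$ estimate. Hence the prism operator $P[y_0,\dots,y_p]=\sum_i(-1)^i[y_0,\dots,y_i,h(y_i),\dots,h(y_p)]$ maps into $\Rips_{p+1}(Y,s+2\delta)$. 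The standard identity $\partial P+P\partial=h_\#-\iota$ then gives the chain homotopy. Composing with the previous display via the composition law finishes the proof.

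No step is a real obstacle; the only care needed is bookkeeping of scales. One must check that every intermediate scale lies between $s$ and the final scale, so that the maps $\phi$ are defined and the composition law applies. The second claim also needs the symmetric version of Lemma \ref{lemma:QI-yields-inclusion}, which is not stated explicitly and must be re-derived with $\delta$ replacing $L\varepsilon$.
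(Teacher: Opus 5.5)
Your proof is correct and takes the paper's route: the first identity combines Lemmas \ref{lemma:induced-map-composition} and \ref{lemma:QI-yields-inclusion} via the composition law, and the second reruns those two lemmas for $f\circ g_f$ with $s\mapsto s+2\delta$ in place of $s\mapsto s+2L\varepsilon$. Your explicit scale checks and the re-derived prism/diameter estimate just fill in details the paper leaves to the reader.
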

\begin{proof}
The first claim is a direct application of Lemmas \ref{lemma:induced-map-composition} and \ref{lemma:QI-yields-inclusion}. The second claim follows from the observation that $f$ is the quasi-inverse of $g_f$ and the fact that we may take the function $s\mapsto s+2\delta$ in place of the function $s\mapsto s+2L\varepsilon$ in these lemmas.
\end{proof}

\subsection{Stability of persistent homology under quasi-isometries}

We define two notions of support for non-zero homology classes.

\begin{definition}
The \emph{persistence support $\spt(c)$ of a non-zero element $c\in H_p(\Rips(X,s))$} is the interval
\[
\spt(c)=\{t\geq 0 \colon c\in \im(\phi_X^{s,t}) \}\cup\{t\geq 0 \colon \phi_X^{t,s}(c)\neq0 \}. 
\]
The \emph{total persistence support $\overline{\spt}(c)$ of $c$} is the interval
\[
\overline{\spt}(c) = \bigcup \{ \spt(\phi_X^{t,s}(c)) \colon \phi_X^{t,s}(c)\ne 0\}.
\]
\end{definition}

For the last statement of this section, we introduce our final scale function 
\[
\lambda_{(L,\varepsilon,\delta)} = \varphi_{(L,L(\varepsilon+2\delta))} \circ \varphi_{(L,\varepsilon)} \colon \R\to \R, \quad s\mapsto L^2 s + 2L(\varepsilon + \delta).
\]
Note that, for $f\in \QI_{(L,\varepsilon,\delta)}(X,Y)$, we have that $\lambda_{(L,\varepsilon,\delta)} = \vartheta_f \circ \varphi_f$. 
\begin{definition}
A non-zero class $c\in H_p(\Rips(X,s))$ is $(L,\varepsilon,\delta)$-stable if $\lambda_{(L,\varepsilon,\delta)}(s)<\sup\spt(c)$.
\end{definition}

Images of stable classes with quasi-isometries are non-trivial. We record this fact as the following proposition.

\begin{proposition}
\label{prop:stable-classes2}
Let $f\in \QI_{(L,\varepsilon,\delta)}(X,Y)$ and let $c\in H_p(\Rips(X,s))$ be an $(L,\varepsilon,\delta)$-stable class.  
Then $f_*(c) \ne 0$.
\end{proposition}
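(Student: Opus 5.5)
The plan is a proof by contradiction through Corollary \ref{cor:QI-composition}. Let $c\in H_p(\Rips(X,s))$ be $(L,\varepsilon,\delta)$-stable and suppose that $f_*(c)=0$ in $H_p(\Rips(Y,\varphi_f(s)))$. We apply the linear map $(g_f)_*\colon H_p(\Rips(Y,\varphi_f(s)))\to H_p(\Rips(X,\vartheta_f(\varphi_f(s))))$ to this class. By the first identity of Corollary \ref{cor:QI-composition},
\[
\phi_X^{\vartheta_f(\varphi_f(s)),s}(c) = (g_f)_*(f_*(c)) = (g_f)_*(0) = 0 .
\]
By the remark after the definition of $\lambda_{(L,\varepsilon,\delta)}$, we have $\vartheta_f(\varphi_f(s)) = \lambda_{(L,\varepsilon,\delta)}(s)$. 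Writing $\lambda=\lambda_{(L,\varepsilon,\delta)}(s)$, the conclusion so far is $\phi_X^{\lambda,s}(c)=0$.

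It remains to contradict stability, that is, the inequality $\lambda<\sup\spt(c)$. Note that $\lambda\ge s$, because $L\ge 1$ and the additive constant is nonnegative. By the composition law, $\phi_X^{t,s}=\phi_X^{t,\lambda}\circ\phi_X^{\lambda,s}$ for every $t\ge\lambda$, so $\phi_X^{t,s}(c)=0$ for all such $t$. Hence the forward part $\{t\ge 0\colon \phi_X^{t,s}(c)\ne 0\}$ of $\spt(c)$ is contained in $[0,\lambda)$; since $\phi_X^{s,s}(c)=c\neq 0$, this part contains at least $[s,\lambda)$ and stays below $\lambda$.

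The backward part $\{t\colon c\in\im(\phi_X^{s,t})\}$ only involves $t\le s\le\lambda$, because $\phi_X^{s,t}$ is defined only for $t\le s$. Therefore $\spt(c)\subset[0,\lambda)$, which gives $\sup\spt(c)\le\lambda$. This contradicts the stability of $c$, so $f_*(c)\neq 0$.

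The only delicate point I expect is the bookkeeping in $\spt(c)$. One has to interpret $\im(\phi_X^{s,t})$ as meaningful only for $t\le s$, so that the backward part cannot push the supremum past $\lambda$. One also has to check that the strict inequality in the definition of stability matches the bound $\sup\spt(c)\le\lambda$. Everything else is a direct application of Corollary \ref{cor:QI-composition} together with the identity $\lambda_{(L,\varepsilon,\delta)}=\vartheta_f\circ\varphi_f$.
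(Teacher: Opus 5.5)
Your proof is correct and is the paper's argument run contrapositively. The paper applies Corollary~\ref{cor:QI-composition} together with $\vartheta_f\circ\varphi_f=\lambda_{(L,\varepsilon,\delta)}$ to write $((g_f)_*\circ f_*)(c)=\phi_X^{\lambda_{(L,\varepsilon,\delta)}(s),s}(c)\neq 0$, taking the non-vanishing directly from stability; you supply the composition-law step it omits, namely that $\phi_X^{\lambda,s}(c)=0$ would force $\sup\spt(c)\le\lambda$. One small slip: under your contradiction hypothesis the forward part of $\spt(c)$ lies inside $[s,\lambda)$ but need not contain all of it, which is harmless since you only use the upper bound.
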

\begin{proof}
Since $\vartheta_f \circ \varphi_f = \lambda_{(L,\varepsilon,\delta)}$, we have, by Corollary \ref{cor:QI-composition}, that 
\[
((g_f)_* \circ f_*)(c) = \phi_X^{\lambda_{(L,\varepsilon,\delta)}(s), s}(c)\ne 0.
\]
The claim follows.
\end{proof}

It is not true that images of stable classes under quasi-isometries are stable. However, we have the following estimate for the support of the image.

\begin{theorem}
\label{thm:sup-inf-interval2}
Let $f \colon X\to Y$ be an $(L,\varepsilon,\delta)$-quasi-isometry. Then, for an $(L,\varepsilon,\delta)$-stable element $c\in H_p(\Rips(X,s))$,
\begin{equation}
\label{eq:sup-spt2}
\sup \spt(f_*( c)) \in [\vartheta_f^{-1}(\sup \spt(c)), \varphi_f(\sup \spt(c))]
\end{equation}
and
\begin{equation}
\label{eq:inf-spt2}
\inf \spt(f_*(c)) \in [\vartheta_f^{-1}(\inf \overline \spt(c)), \varphi_f(\inf \spt(c))].
\end{equation}
\end{theorem}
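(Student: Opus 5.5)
The plan is to reduce all four inequalities to two ingredients. The first is naturality of the induced maps with respect to the inclusion maps $\phi$. The second is Corollary~\ref{cor:QI-composition}, which identifies $(g_f)_*\circ f_*$ with $\phi_X^{\vartheta_f(\varphi_f(s)),s}$.

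\emph{Step 1 (naturality).} For $u\le t$, the chain map $f_\#$ commutes with the inclusions $\iota$, because both send a simplex to the simplex with the same image vertices. Hence
\[
f_*\circ\phi_X^{t,u}=\phi_Y^{\varphi_f(t),\varphi_f(u)}\circ f_*,
\qquad
(g_f)_*\circ\phi_Y^{t,u}=\phi_X^{\vartheta_f(t),\vartheta_f(u)}\circ (g_f)_*.
\]
Throughout, write $c\in H_p(\Rips(X,s))$, so that $f_*(c)\in H_p(\Rips(Y,\varphi_f(s)))$. Also write $S=\sup\spt(c)$; stability gives $\lambda_{(L,\varepsilon,\delta)}(s)=\vartheta_f(\varphi_f(s))<S$. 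Proposition~\ref{prop:stable-classes2} gives $f_*(c)\neq 0$, so the supports of $f_*(c)$ are defined.

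\emph{Step 2 (the supremum, \eqref{eq:sup-spt2}).} For the upper bound, take any $u>S$. Then $\phi_X^{u,s}(c)=0$, so by naturality $\phi_Y^{\varphi_f(u),\varphi_f(s)}(f_*c)=f_*\phi_X^{u,s}(c)=0$. This gives $\sup\spt(f_*c)\le\varphi_f(u)$, and letting $u\downarrow S$ yields the bound $\varphi_f(S)$.

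For the lower bound, let $\varphi_f(s)\le t<\vartheta_f^{-1}(S)$; this range is nonempty exactly because of stability. Suppose $\phi_Y^{t,\varphi_f(s)}(f_*c)=0$. Applying $(g_f)_*$ and using naturality together with Corollary~\ref{cor:QI-composition}, we get
\[
0=\phi_X^{\vartheta_f(t),\vartheta_f(\varphi_f(s))}(g_f)_*f_*(c)=\phi_X^{\vartheta_f(t),s}(c).
\]
This is nonzero since $\vartheta_f(t)<S$, a contradiction. Therefore every such $t$ lies in $\spt(f_*c)$, and so $\sup\spt(f_*c)\ge\vartheta_f^{-1}(S)$.

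\emph{Step 3 (the infimum, \eqref{eq:inf-spt2}).} For the upper bound, suppose $u\le s$ and $c=\phi_X^{s,u}(c')$. Then $f_*c=\phi_Y^{\varphi_f(s),\varphi_f(u)}(f_*c')$, so $\varphi_f(u)\in\spt(f_*c)$. Taking the infimum over $u$ gives $\inf\spt(f_*c)\le\varphi_f(\inf\spt(c))$.

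For the lower bound, suppose $f_*c=\phi_Y^{\varphi_f(s),t}(d)$ with $t\le\varphi_f(s)$. Apply $(g_f)_*$. By naturality and Corollary~\ref{cor:QI-composition},
\[
\phi_X^{\vartheta_f(\varphi_f(s)),\vartheta_f(t)}\big((g_f)_*d\big)=\phi_X^{\vartheta_f(\varphi_f(s)),s}(c).
\]
The right-hand side is nonzero by stability. Hence $\vartheta_f(t)\in\spt\big(\phi_X^{\vartheta_f(\varphi_f(s)),s}(c)\big)\subset\overline{\spt}(c)$, so $\vartheta_f(t)\ge\inf\overline\spt(c)$. Taking the infimum over such $t$ yields $\inf\spt(f_*c)\ge\vartheta_f^{-1}(\inf\overline\spt(c))$.

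The main obstacle is the bookkeeping rather than any deep step. One must check that all scale parameters are in the right order for the $\phi$'s to be defined; for instance $\vartheta_f(t)\le\vartheta_f(\varphi_f(s))$ in Step~3. One must also check that the stability hypothesis is used precisely where nonvanishing after $\lambda_{(L,\varepsilon,\delta)}(s)$ is needed. Finally, suprema and infima may fail to be attained, which is why the argument works with $u\downarrow S$ and with ranges of $t$ rather than with the endpoints themselves. The appearance of $\overline{\spt}$ rather than $\spt$ in the lower infimum bound is forced by Step~3: the preimage produced by $g_f$ lives at the later scale $\vartheta_f(\varphi_f(s))$, not at $s$.
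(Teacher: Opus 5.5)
Your proposal is correct and follows essentially the same proof as the paper. It proves the same four bounds, each from naturality of $f_*$ and $(g_f)_*$ with respect to the maps $\phi$, together with the identity $(g_f)_*\circ f_*=\phi_X^{\lambda_{(L,\varepsilon,\delta)}(s),s}$ from Corollary~\ref{cor:QI-composition} and stability for nonvanishing. The only difference is cosmetic: you argue directly by exhibiting scales inside or outside the supports, whereas the paper proves each bound by contradiction using an intermediate scale $t'$, $t''$, or $t'''$.
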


\begin{proof}
Let $g_f = \Phi(f) \in \QI_{(L,L(\varepsilon+2\delta),L\varepsilon)}(Y,X)$ be the fixed quasi-inverse of $f$. 

For the upper bound of \eqref{eq:sup-spt2}, let $t > \sup \spt(c)$. Then 
\[
\phi_Y^{\varphi_f(t),\varphi_f(s)}\left( f_*(c)\right) = f_* \left( \phi_X^{t,s}(c)\right) = f_*(0) = 0.
\]
Thus $\sup \spt(f_* c) \le \varphi_f(t)$. The claim follows.

We move now to the lower bound in \eqref{eq:sup-spt2}.
Suppose that $\sup\spt(f_*(c))<\vartheta_f^{-1}(\sup\spt(c))$. Fix $t'$, such that $\sup\spt(f_*(c))<t'<\vartheta_f^{-1}(\sup\spt(c))$.
Now 
\[
\phi_X^{\vartheta_f(t'),s}(c)=((g_f)_* \circ \phi_Y^{t',\varphi_f(s)})(f_*(c))=(g_f)_*(0)=0. 
\]
Thus $\sup\spt(c)\leq \vartheta_f(t')$ which is contradiction.

For the upper bound in \eqref{eq:inf-spt2} if $s=\inf\spt(c)$, then the claim is clear. Thus, let $s>\inf\spt(c)$. We suppose that $\inf\spt(f_*(c))>\varphi_f(\inf\spt(c))$. Fix $t''$ such that $\inf\spt(c)<t''<\varphi_f^{-1}(\inf\spt(f_*(c)))$. Notably $t''\in\spt(c)$, and $t''<s$, since $\varphi_f^{-1}(\inf\spt(f_*(c)))\leq s$. Let $c'\in H_p(\Rips(X,t''))$  such that $\phi^{s,t''}_X(c')=c$. We have that 
\[
\phi_X^{\varphi_f(s),\varphi_f(t'')}(f_*(c'))=(f_*\circ \phi^{s,t''}_X)(c')=f_*(c).
\]
Thus $\inf\spt(f_*c)\leq \varphi_f(t'')$, which is contradiction.

The lower bound of \eqref{eq:inf-spt2} is also shown by contradiction.
Suppose that $\inf\spt(f_*(c))<\vartheta^{-1}_f(\inf\overline{\spt}(c))$. 
Fix $t'''\in\spt(f_*(c))$ satisfying $\vartheta_f(t''')<\inf\overline{\spt}(c)$. 
Take $c''\in H_p(\Rips(Y,t'''))$ such that $f_*(c)=\phi_Y^{\varphi_f(s),t'''}( c'')$.
Now 
\begin{align*}
(\phi_X^{\vartheta_f(\varphi_f(s)),\vartheta_f(t''')}\circ (g_f)_*)(c'')&=((g_f)_* \circ \phi_Y^{\varphi_f(s),t'''})(c'')\\
&= ((g_f)_*\circ f_*)(c)\\
&=\phi_X^{\vartheta_f(\varphi_f(s)),s}(c)\neq 0.
\end{align*}

Thus $(g_f)_*(c'')$ and $c$ meets, meaning $\inf\overline{\spt}(c)\leq \vartheta_f(t''')$, which is contradiction.
\end{proof}

\begin{remark}
Recall that $H_p(\Rips(X,s))=0$ for $s<0$, and $f_*(c)\neq 0$ for an $(L,\varepsilon,\delta)$-stable element $c$. Thus 
if in Theorem \ref{thm:sup-inf-interval2}, 
$\vartheta_f^{-1}(\inf \spt(c))<0$, we have that 
\[
\inf\spt(f_*(c))\in [0, \varphi_f(\inf \spt(c))].
\] 
\end{remark}

\section{Stability on persistence diagrams}\label{sec:per_dgm_stab}

In the previous section, we studied how homology classes are mapped through the induced map of a quasi-isometry, and we defined so-called stable classes. Now we start to move slightly towards the computational side. Our goal is to give an interpretation of stable classes in terms of a persistence module, which gives us a way to describe the structure of space in different scales, also in an illustrative way. Then we formulate Theorem \ref{thm:sup-inf-interval2} in terms of (indexed) persistent diagrams. 
 
\subsection{Persistence modules}\label{sec:persistence_modules}

Recall that the  persistence module $\mathbb{V}$ is the family of vector spaces \[\{V^s\mid s \in \R\}\] and a double-indexed family of linear maps \[\{v^{t,s}\colon V^s\to V^t \mid s\leq t \}\] for which $v^{t,s}=v^{t,r}\circ v^{r,s}$ for $s\leq r\leq t$ and $v^{t,t}=\id$, see e.g. \cite{Chazal_SSPM_book}. We call the linear maps $v^{t,s}\colon V^s\to V^t$ as persistence maps.

Notably the vector spaces $H_p(\Rips(X,s))$ together with the linear maps $\phi^{t,s}_X\colon H_p(\Rips(X,s))\to  H_p(\Rips(X,t))$ form a persistence module. We denote this module by $\mathbb{H}_p(X)$ and call it more specifically \emph{a homology persistence module of $X$}.

Another class of persistence modules is interval persistence modules.
Let $J\subset\R$ be an interval, and define spaces
\[
I^t_{J}=\begin{cases}
    \Z_2, \text{ if } t\in J \\
    0, \text{ otherwise}
\end{cases}
\]
and linear maps $\textsf{i}^{t,s}_{J}\colon I^s_{J} \to I^t_J$, \[
    \textsf{i}^{t,s}_{J}=\begin{cases}
\id,\text{ if } s,t\in J \\
0, \text{ otherwise}.
\end{cases}
\]
The family of vector spaces $I^t_{J}$ together with maps $\textsf{i}^{t,s}_{J}$ is a persistence module, denoted by $\mathbb{I}_{J}$.

Furthermore, given an indexed family of intervals $\{J_k\mid k\in K\}$ we denote that the direct sum of interval persistence modules, $\bigoplus_{k\in K} \mathbb{I}_{J_k}$, is the vector spaces $\bigoplus_{k\in K} I^t_{J_k}$ together with maps $\bigoplus_{k\in K} \textsf{i}^{t,s}_{J_k}$, where the direct sum of the vector spaces and maps are as usual. Indeed, the direct sum of interval persistence modules (shortly interval modules) is the persistence module.

We start to study the relationship between the homology persistence module and the direct sum of interval persistence modules. For that, we give the definition of the isomorphism of persistence modules.

\begin{definition}
    Two persistence modules $\mathbb{V}$ and $\mathbb{W}$ are said to be isomorphic, denoted by $\mathbb{V}\cong \mathbb{W}$ if for every persistence vector spaces $V^t$ and $W^t$, $t\in \R$, there exists an isomorphism $\tau^t\colon V^t \to W^t$ such that 
    \[
    \tau^t\circ  v^{t,s}= w^{t,s}\circ \tau^s 
    \]
    and
    \[(\tau^{-1})^t\circ w^{t,s} = v^{t,s} \circ (\tau^{-1})^s,
    \] 
    where $v^{t,s}\colon V^s\to V^t$ and $w^{t,s}\colon W^s\to W^t$ are persistence maps for every $s\leq t$. The isomorphism $\tau^t\colon V^t \to W^t$ and its inverse $(\tau^{-1})^t\colon W^t \to V^t$ are called persistence isomorphisms.
\end{definition}

If a homology persistence module $\mathbb{H}_p(X)$ is isomorphic to the interval persistence module $ \bigoplus_{k\in K} \mathbb{I}_{J_k}$, the persistence module $\mathbb{H}_p(X)$ is said to be \emph{decomposable}.
If a homology persistence module is decomposable, then by the Krull-Remak-Schmidt-Azumaya theorem, see e.g. \cite{Chazal_SSPM_book}, the interval persistence module is unique up to indexing.

\begin{theorem}\label{thm:indexing}
    Suppose $\mathbb{H}_p(X)$ is decomposable, and $\mathbb{H}_p(X)\cong \bigoplus_{k\in K} \mathbb{I}_{J_k}$. Furthermore, suppose that $\bigoplus_{k\in K} \mathbb{I}_{J_k}\cong \bigoplus_{m\in M} \mathbb{I}_{J'_m}$. Then there is a bijection $\sigma\colon K\to M$ such that $J_k=J'_{\sigma(k)}$ for all $k\in K$.
\end{theorem}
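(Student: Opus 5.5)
The plan is to reduce Theorem \ref{thm:indexing} to the uniqueness part of the Krull--Remak--Schmidt--Azumaya theorem. The essential input is that each interval module $\mathbb{I}_J$ has local endomorphism ring. So I would first verify this. An endomorphism of $\mathbb{I}_J$ is a family of linear maps $\mathbb{Z}_2\to\mathbb{Z}_2$ indexed by $t\in J$ that commute with the identity maps $\textsf{i}^{t,s}_J$ for $s,t\in J$. Since $J$ is an interval, hence connected in the order sense, all these maps coincide. So $\mathrm{End}(\mathbb{I}_J)\cong\mathbb{Z}_2$ (assuming $J\neq\emptyset$), a field and in particular local.

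Next I would apply Azumaya's theorem in the abelian category of persistence modules. It says that any two decompositions of a module into direct sums of modules with local endomorphism rings are related by a bijection of index sets matching isomorphic summands. Applied to the given isomorphism $\bigoplus_{k\in K}\mathbb{I}_{J_k}\cong\bigoplus_{m\in M}\mathbb{I}_{J'_m}$, this yields a bijection $\sigma\colon K\to M$ with $\mathbb{I}_{J_k}\cong\mathbb{I}_{J'_{\sigma(k)}}$. Here I take the isomorphism to be the composite of the two persistence isomorphisms from the definition. Any empty intervals would be discarded beforehand, since they contribute zero summands.

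The remaining step is to show that $\mathbb{I}_J\cong\mathbb{I}_{J'}$ forces $J=J'$. An isomorphism gives isomorphisms $I^t_J\cong I^t_{J'}$ for every $t$. Now $t\in J$ exactly when $\dim I^t_J=1$, so $J=\{t: I^t_J\ne 0\}=J'$. Together with the bijection this gives $J_k=J'_{\sigma(k)}$ for all $k$.

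The main obstacle is the applicability of Azumaya's theorem for arbitrary, possibly infinite, index sets $K$. I would cite \cite{Chazal_SSPM_book}, as the paper does. If a self-contained argument is needed, I would count multiplicities instead. For each interval $J$, the number of indices $k$ with $J_k=J$ can be recovered as the dimension of a functorially defined subquotient, built from the images and kernels of the persistence maps near the endpoints of $J$. Matching these counts then gives the bijection $\sigma$ interval by interval, using the Axiom of Choice for infinite cardinalities.
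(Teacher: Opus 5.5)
Your proposal is correct and follows the paper's route. The paper simply invokes the Krull--Remak--Schmidt--Azumaya theorem via \cite{Chazal_SSPM_book}, and you supply the two checks it leaves implicit: that $\mathrm{End}(\mathbb{I}_J)\cong\mathbb{Z}_2$ is local, and that $\mathbb{I}_J\cong\mathbb{I}_{J'}$ forces $J=J'$. Your caveat about discarding empty intervals is also well placed, since the statement as written holds only under the implicit convention that all $J_k$ and $J'_m$ are nonempty (an extra empty $J_k$ adds a zero summand and breaks the bijection $K\to M$).
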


However, we do not yet have any 
guarantee that $\mathbb{H}_p(X)$ is isomorphic to any persistence interval module. Fortunately, the following theorem provides conditions for the existence of an isomorphism. It is a reformulation of a theorem from \cite[p.30]{oudot:book}, adapted to our setting.
\begin{theorem}\label{thrm:decomposable}
    Let $\mathbb{H}_p(X)$ be a homology persistence module. Then
    $\mathbb{H}_p(X)$ is decomposable if one of the following conditions holds:
    \begin{enumerate}
        \item $H_p(\Rips(X,s))\neq H_p(\Rips(X,t))$ only for finitely many $s,t\in\R$, $s\neq t$,
        \item for every $s\in\R$,  $H_p(\Rips(X,s))$ is finite dimensional, or
        \item for every $s\leq t$, $\rank(\phi_X^{t,s})<\infty$.
    \end{enumerate}
\end{theorem}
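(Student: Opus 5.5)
The plan is not to reprove any decomposition theorem from scratch. Instead I will reduce each of the three conditions to a standard decomposition result for persistence modules indexed by a totally ordered set, as collected in \cite[p.30]{oudot:book}. Uniqueness of the resulting family of intervals then comes from Theorem \ref{thm:indexing}. First I will fix notation: $\mathbb{H}_p(X)$ is a persistence module over the poset $(\R,\le)$, with spaces $V^s=H_p(\Rips(X,s))$ and maps $v^{t,s}=\phi_X^{t,s}$. Also $V^s=0$ for $s<0$.

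\textbf{Condition (2).} Here $\mathbb{H}_p(X)$ is pointwise finite dimensional. I will invoke Crawley-Boevey's theorem: a pointwise finite dimensional module over a totally ordered set is isomorphic to $\bigoplus_{k\in K}\mathbb{I}_{J_k}$. The only thing to check is that the isomorphism is one of persistence modules in the sense of the definition above. This holds because the theorem yields a natural isomorphism $\tau$, so the inverses $(\tau^t)^{-1}$ automatically intertwine the structure maps.

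\textbf{Condition (1).} I read this condition as saying that there are finitely many critical scales $s_1<\dots<s_n$ such that $\phi_X^{t,s}$ is an isomorphism whenever $s,t$ lie in the same open interval between consecutive critical scales, or in the same half-line. I will then restrict $\mathbb{H}_p(X)$ to a finite subposet $T$ containing each $s_i$ together with one point in each complementary interval. This gives a representation of a linearly oriented $A_m$ quiver, possibly infinite dimensional. Since $A_m$ is of finite representation type, the Auslander and Ringel--Tachikawa theorem (Webb's version for persistence) shows that every representation, of any dimension, is a direct sum of interval representations. Each interval of $T$ then extends uniquely to an interval $J\subset\R$, with open or closed endpoints determined by whether the jump happens at $s_i$ or just after it. Because the maps are isomorphisms on each constancy interval, the decomposition of the restriction extends to a decomposition of the whole module.

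\textbf{Condition (3).} This is the step I expect to be the main obstacle. A module in which every $\phi_X^{t,s}$ with $s<t$ has finite rank is q-tame, and q-tame modules need not decompose into intervals literally. They do so only modulo ephemeral summands, by Chazal--Crawley-Boevey--de Silva. My first attempt would be to show that the ephemeral part vanishes in the situation at hand, which would give a genuine decomposition. Failing that, I would interpret (3) as the reformulation in \cite{oudot:book}, where "decomposable" is taken in the observable category, i.e.\ up to ephemeral modules. That weaker notion is all that is used later to define indexed diagrams. I would state this caveat explicitly rather than claim more than the cited theorem gives. Unlike the first two conditions, this case must rest on the citation rather than on a self-contained argument.
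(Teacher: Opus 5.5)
Your handling of conditions (1) and (2) is fine. It matches the paper, which gives no argument at all and only cites the standard decomposition theorems in \cite[p.30]{oudot:book}. Under your explicit reading of (1) as ``finitely many critical scales'', the reduction to a linearly oriented $A_m$ quiver and the extension of the decomposition across the constancy intervals go through as you describe. That is more detail than the paper supplies. One small point on attribution: the arbitrary-dimensional $A_m$ statement is Auslander/Ringel--Tachikawa, while Webb's theorem concerns pointwise finite-dimensional $\mathbb{Z}$-indexed modules.

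The gap is in (3), and it comes from misreading the hypothesis. The statement quantifies over all $s\le t$, including $s=t$. Since $\phi_X^{s,s}=\id$, the hypothesis gives $\dim H_p(\Rips(X,s))=\rank(\phi_X^{s,s})<\infty$ for every $s$. So (3) implies (2), and Crawley-Boevey's theorem settles it in one line.

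You instead treated (3) as q-tameness (strict inequality $s<t$) and offered two ways forward. Neither proves the statement as written:
\begin{itemize}
\item Showing that the ephemeral part vanishes comes with no argument. As you note yourself, it fails for general q-tame modules.
\item Reinterpreting ``decomposable'' in the observable category changes the conclusion of the theorem.
\end{itemize}
Your justification that the weaker notion ``is all that is used later'' is also not correct. The rest of the paper relies on honest persistence isomorphisms $\tau_X^t$, the natural bases $\mathcal{B}_X^t$, and Theorem \ref{thm:indexing}. None of these is available from a decomposition that holds only up to ephemeral summands.

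So the right treatment of (3) is the trivial reduction to (2). The q-tame version you were worried about is a genuinely stronger claim that the cited results do not support.
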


\subsection{Persistence natural basis}
We note that, 
for every vector space $\bigoplus_{k\in K}I_{J_k}^t$, there is a natural choice for the basis, which we refer to as \emph{a persistence natural basis},

\begin{align*}
  \mathcal{B}^t= \left\{e^t_k=(a_i)_{i\in K}\ \middle\vert \begin{array}{l}
   a_i=0, \text{ if } i\neq k\\
    a_i=1, \text{ if }i=k, \text{ and } I_{J_k}^t=\Z_2.
  \end{array}\right\}
\end{align*}

We can define persistence support for an element of the interval persistence module similarly to homology classes.
\begin{definition}
    Consider an interval persistence module $\bigoplus_{k\in K}\mathbb{I}_{J_k}$. The persistence support of the vector $v\in \bigoplus_{k\in K}I_{J_k}^t$ is 
    \[
    \spt(v)=\{s\geq0 \mid v\in\im(\bigoplus_{k\in K} \mathsf{i}^{t,s}_{J_{k}}) \}\cup \{s\geq 0 \mid \bigoplus_{k\in K} \mathsf{i}^{s,t}_{J_{k}}(v)\neq 0\}
    \] and the total persistence support of vector $v$ is
    \[
\overline{\spt}(v) = \bigcup \{ \spt(\bigoplus_{k\in K} \mathsf{i}^{s,t}_{J_{k}}(v)) \colon \bigoplus_{k\in K} \mathsf{i}^{s,t}_{J_{k}}(v)\ne 0\}.
\]
\end{definition}
It follows straight from the definition that the support of a natural basis vector is exactly the interval where it exists and is non-zero. We record this observation as a lemma.
\begin{lemma}
      Consider an interval persistence module $\bigoplus_{k\in K}\mathbb{I}_{J_k}$. Let $\mathcal{B}^t$ be the persistence natural basis for the vector space $\bigoplus_{k\in K}I^t_{J_k}$. Then for every $e_k^t\in \mathcal{B}^t$ the persistence support is
      \[
      \spt(e_k^t)=J_k.
      \]
\end{lemma}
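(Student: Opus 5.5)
The plan is to unwind both definitions of $\spt$ and compare them directly, using the explicit form of the persistence maps on the natural basis. Fix $k\in K$ and $t$ with $e_k^t\in\mathcal{B}^t$, so $I^t_{J_k}=\Z_2$ and $t\in J_k$. The key observation is that $\bigoplus_{i\in K}\mathsf{i}^{r,s}_{J_i}$ acts coordinatewise. Each coordinate map $\mathsf{i}^{r,s}_{J_i}$ is the identity if $r,s\in J_i$ and zero otherwise. Consequently, the image of $e_k^t$ under a forward map, or any preimage of $e_k^t$ under a backward map, only involves the $k$-th coordinate.

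The support splits into two parts, which I treat in turn.

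\emph{Forward part.} Take $s\ge t$. Then $\bigoplus_i \mathsf{i}^{s,t}_{J_i}(e_k^t)$ has $k$-th coordinate $\mathsf{i}^{s,t}_{J_k}(1)$ and all other coordinates zero. This is nonzero exactly when $s\in J_k$, since $t\in J_k$ already. So the forward part of $\spt(e_k^t)$ is $\{s\ge t\colon s\in J_k\}$.

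\emph{Backward part.} Take $s\le t$. The vector $e_k^t$ lies in the image of $\bigoplus_i\mathsf{i}^{t,s}_{J_i}$ if and only if some $w\in\bigoplus_i I^s_{J_i}$ satisfies $\mathsf{i}^{t,s}_{J_k}(w_k)=1$. This happens exactly when $s\in J_k$, in which case we may take $w=e_k^s$. If $s\notin J_k$, the $k$-th coordinate map is zero and no such $w$ exists. So the backward part is $\{s\le t\colon s\in J_k\}$.

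Taking the union gives $J_k\cap[0,\infty)$. The paper only considers nonnegative scales (intervals arising from Rips homology satisfy $J_k\subset[0,\infty)$), so this equals $J_k$. This is the one point needing care: the definition restricts to $s\ge0$, so I would note that the intervals are taken inside $[0,\infty)$, or else read the statement as $J_k\cap[0,\infty)$.

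There are also two minor conventions to handle. First, the definition writes the maps as $\mathsf{i}^{t,s}$ and $\mathsf{i}^{s,t}$ without specifying order, so I would interpret each as being defined only when its superscript is at least its subscript. Second, since $J_k$ is an interval containing $t$, convexity guarantees that the two halves are exactly $J_k\cap[t,\infty)$ and $J_k\cap(-\infty,t]$, with no gaps. Nothing beyond this bookkeeping seems to be an obstacle.
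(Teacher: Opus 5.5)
Your proposal is correct and matches the paper. The paper gives no separate proof and only says the lemma ``follows straight from the definition''; your coordinatewise unwinding of the forward and backward parts of $\spt(e_k^t)$ is exactly that check. Your caveat is also well taken: the definition of $\spt$ only admits scales $s\geq 0$, so what one literally obtains is $J_k\cap[0,\infty)$, and this equals $J_k$ in the Rips setting, where all intervals lie in $[0,\infty)$.
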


The following lemma records that the natural basis elements' persistence support and total persistence support are exactly the same, which is an immediate consequence of the persistence map.

\begin{lemma}{\label{lemma:spt_ospt_same}}
   Consider an interval persistence module $\bigoplus_{k\in K}\mathbb{I}_{J_k}$.  Let $\mathcal{B}^t$ be the natural basis for the vector space $\bigoplus_{k\in K}I^t_{J_k}$. Then for every $e_k^t\in \mathcal{B}^t$ 
\[
\inf\overline{\spt}(e^t_k)=\inf\spt(e^t_k).
\]

\end{lemma}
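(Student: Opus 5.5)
The plan is to compute both sides of the claimed identity directly from the structure of the interval module $\bigoplus_{k\in K}\mathbb{I}_{J_k}$. The key observation is that the maps $\bigoplus_{k}\mathsf{i}^{s,t}_{J_k}$ act coordinatewise. On the basis vector $e^t_k$, for $t\in J_k$, they therefore act by
\[
\bigoplus_{i\in K}\mathsf{i}^{s,t}_{J_i}(e^t_k)=\begin{cases} e^s_k, & s\in J_k,\\ 0, & \text{otherwise}.\end{cases}
\]
So every nonzero forward image of $e^t_k$ is again a natural basis vector $e^s_k$ with $s\in J_k$, $s\ge t$.

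First, by the definition of total persistence support, $\overline{\spt}(e^t_k)$ is the union of $\spt(e^s_k)$ over those $s\ge t$ with $e^s_k\neq 0$. This union includes the term $s=t$, since the map $\mathsf{i}^{t,t}$ is the identity and $e^t_k\neq 0$. Hence $\spt(e^t_k)\subset\overline{\spt}(e^t_k)$, which gives $\inf\overline{\spt}(e^t_k)\le\inf\spt(e^t_k)$.

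Second, for the reverse inequality, the previous lemma gives $\spt(e^s_k)=J_k$ for every $s\in J_k$. Thus every set in the union equals $J_k$, so $\overline{\spt}(e^t_k)=J_k=\spt(e^t_k)$, and in particular the infima agree. If one prefers not to lean on the previous lemma, one checks directly that the backward part $\{r : e^s_k\in\operatorname{im}\mathsf{i}^{s,r}\}$ is exactly $\{r\in J_k : r\le s\}$, and the forward part is $\{r\in J_k : r\ge s\}$. This uses convexity of the interval $J_k$ and the fact that the maps are the identity only between points of $J_k$.

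The only mild subtlety is bookkeeping. The paper's definition writes the support with the index order of the maps $\mathsf{i}^{t,s}$ somewhat loosely, and one must also respect the restriction $s\ge 0$ appearing in the definition of $\spt$. I would handle both by intersecting with $[0,\infty)$ throughout. Since both supports then equal $J_k\cap[0,\infty)$, the claim follows; no genuine obstacle is expected.
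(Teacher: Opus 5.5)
Your proposal is correct and follows essentially the paper's reasoning. The paper gives no written proof and only calls the lemma an immediate consequence of how the persistence maps act. You spell this out: every nonzero forward image of $e^t_k$ is again a natural basis vector $e^s_k$ with support $J_k$ (restricted to $[0,\infty)$). Hence $\overline{\spt}(e^t_k)=\spt(e^t_k)$, so in particular the infima agree.
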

The support of a general element of $\bigoplus_{k\in K}I_{J_k}^t$ depends on the persistence supports of the persistence natural basis elements. For that, we use the following index notation.
For every $v\in \bigoplus_{k\in K}I_{J_k}^t$, denote 
\[
K_{ \mathcal{B}^t}(v)=\{k \in K\mid v= \bigoplus_{e_k^t\in \mathcal{B}^t}b_ke^t_k, b_k\neq0\}.
\]
\begin{lemma}\label{lemma:gen_elem_spt}
    Consider an interval persistence module $\bigoplus_{k\in K}\mathbb{I}_{J_k}$. Let $\mathcal{B}^t$ be the persistence natural basis for the  vector space $\bigoplus_{k\in K}I^t_{J_k}$. Then for every $v\in\bigoplus_{k\in K}I^t_{J_k} $, the persistence support $\spt(v)$ is an interval $J$ such that 
    \begin{align*}  
    \inf(J)&=\max_{k\in K_{\mathcal{B}^t}(v)}\inf{\spt(e^t_k)}
\end{align*}
and
\begin{align*}
        \sup(J)&=\max_{k\in K_{\mathcal{B}^t}(v)}\sup{\spt(e^t_k)}.
    \end{align*}
\end{lemma}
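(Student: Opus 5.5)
We would prove Lemma \ref{lemma:gen_elem_spt} by working coordinatewise in the persistence natural basis. Since every map in $\bigoplus_{k\in K}\mathbb{I}_{J_k}$ is diagonal, write $v=\sum_{k\in K_{\mathcal{B}^t}(v)} e^t_k$ (over $\Z_2$ every nonzero coefficient equals $1$). The set $K_{\mathcal{B}^t}(v)$ is finite, because elements of a direct sum have finite support, so the maxima in the statement exist. Throughout we use the previous lemma, which says that $\spt(e^t_k)=J_k$ and that $t\in J_k$ for every $k\in K_{\mathcal{B}^t}(v)$.

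We treat the two halves of the support separately. For the forward half, take $s\ge t$. Then $\bigoplus_k \mathsf{i}^{s,t}_{J_k}(v)=\sum_{k\in K_{\mathcal{B}^t}(v)} \mathsf{i}^{s,t}_{J_k}(e^t_k)$. The $k$-th term equals $e^s_k$ if $s\in J_k$ and $0$ otherwise, and distinct $k$ occupy distinct coordinates, so no cancellation can occur. Hence the image is nonzero if and only if $s\in J_k$ for some $k$. Because each $J_k$ is an interval containing $t$, this set of $s\ge t$ is the interval $[t,\max_k \sup J_k\rangle$, with the endpoint included or excluded according to the interval $J_k$ that attains the maximum. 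This gives the formula for $\sup(J)$.

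For the backward half, take $s\le t$. Then $v\in\im(\bigoplus_k\mathsf{i}^{t,s}_{J_k})$ if and only if every coordinate $k\in K_{\mathcal{B}^t}(v)$ lies in the image of $\mathsf{i}^{t,s}_{J_k}$. For a single coordinate this holds exactly when $s\in J_k$, since $t\in J_k$ already. So $v$ lies in the image if and only if $s\in\bigcap_k J_k$, and the set of such $s\le t$ is an interval ending at $t$ with infimum $\max_k\inf J_k$. Taking the union with the forward half, which shares the point $t$, shows that $\spt(v)$ is an interval $J$ with the stated $\inf$ and $\sup$.

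The only delicate point is the asymmetry between the two halves. Going backward requires all components to survive, which gives an intersection and hence a max of infima. Going forward only requires one component to survive, which gives a union and hence a max of suprema. I would also note the role of finiteness of $K_{\mathcal{B}^t}(v)$ together with each $J_k$ containing $t$: this guarantees that the intersection and the union are intervals and that the extrema are attained.
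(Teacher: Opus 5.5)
Your proof is correct and follows the same route as the paper: $v$ lies in the image of the backward map only if every component $e^t_k$, $k\in K_{\mathcal{B}^t}(v)$, is alive at $s$, which gives the max of infima, and the forward image is nonzero as soon as one component survives, which gives the max of suprema. Your version is more complete than the paper's sketch, since you prove both directions of each equivalence (the paper states only the ``only if'' for the image and the ``if'' for non-vanishing). You should, however, state the implicit assumption $v\neq 0$, so that $K_{\mathcal{B}^t}(v)$ is nonempty and the maxima make sense.
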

\begin{proof}
We note that $v\in \im\bigoplus_{k\in K} \mathsf{i}^{t,s}_{J_{k}} $, only if  $e^s_k\in\mathcal{B}^s$ for every $k\in K$ for which $k\in K_{\mathcal{B}^t}(v)$. Thus $\inf(J)=\max_{k\in K_{\mathcal{B}^t}(v)}\inf{\spt(e^t_k)}.$ 
Furthermore $\bigoplus_{k\in K}\mathsf{i}_{J_{k}}^{s,t}(v)\neq 0$ whenever for some $k\in K_{\mathcal{B}^t}(v)$, $\bigoplus_{k\in K}\textsf{i}^{s,t}_{J_k}(e^t_k)\neq 0$. Thus $\sup(J)=\max_{k\in K_{\mathcal{B}^t}(v)}\sup{\spt(e^t_k)}$.
\end{proof}
We finish this subsection with two lemmas, which follow directly from the definition of isomorphism between two persistence modules. 

\begin{lemma}{\label{lemma:decombosable_same_support_1}}
    Suppose $\mathbb{H}_p(X)$ is decomposable, $\mathbb{H}_p(X)\cong \bigoplus_{k\in K} \mathbb{I}_{J_k}$, and $(\tau^{-1})^t\colon \bigoplus_{k\in K}I^t_{J_k}\to H_p(\Rips(X,t))$ is a persistence isomorphism. If $\spt(v)=J$ for $v\in \bigoplus_{k\in K}I^t_{J_k}$, then $\spt((\tau^{-1})^t(v))=J$. Furthermore, if $\overline{\spt}(v)=J'$ for $v\in \bigoplus_{k\in K}I^t_{J_k}$, then $\overline{\spt}((\tau^{-1})^t(v))=J'$.
\end{lemma}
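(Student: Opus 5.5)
The plan is to prove the statement directly from the definitions of persistence support and total persistence support, using the two intertwining relations of a persistence isomorphism. Write $w^{t,s}=\bigoplus_{k\in K}\mathsf{i}^{t,s}_{J_k}$ for the interval module maps and $c=(\tau^{-1})^t(v)$. The key fact is that, for all $s\le t$,
\[
(\tau^{-1})^t\circ w^{t,s}=\phi_X^{t,s}\circ(\tau^{-1})^s
\quad\text{and}\quad
\tau^t\circ\phi_X^{t,s}=w^{t,s}\circ\tau^s,
\]
and that every $(\tau^{-1})^r$ and $\tau^r$ is a linear isomorphism.

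First I treat the ``forward'' half of the support, i.e.\ scales $r\ge t$. Applying the intertwining relation with $(t,r)$ in place of $(s,t)$ gives $\phi_X^{r,t}(c)=(\tau^{-1})^r\big(w^{r,t}(v)\big)$. Since $(\tau^{-1})^r$ is injective, $\phi_X^{r,t}(c)\neq 0$ exactly when $w^{r,t}(v)\neq 0$.

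Next I treat the ``backward'' half, i.e.\ scales $r\le t$; we must show $c\in\im\phi_X^{t,r}$ if and only if $v\in\im w^{t,r}$.
\begin{itemize}
\item If $v=w^{t,r}(u)$, then $c=(\tau^{-1})^t w^{t,r}(u)=\phi_X^{t,r}\big((\tau^{-1})^r(u)\big)$.
\item Conversely, if $c=\phi_X^{t,r}(c')$, then $v=\tau^t(c)=\tau^t\phi_X^{t,r}(c')=w^{t,r}\big(\tau^r(c')\big)$.
\end{itemize}
Combining the two halves, the defining sets of $\spt$ coincide, so $\spt(c)=\spt(v)=J$. The only care needed is the index convention in the paper's definition of $\spt$, where the superscripts are written loosely. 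I read the first set as ``preimage scales $r\le t$'' and the second as ``forward scales $r\ge t$'', as in the homology definition.

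For the total support, I first note that for every $r\ge t$ the forward computation gives $\phi_X^{r,t}(c)=(\tau^{-1})^r\big(w^{r,t}(v)\big)$. The index sets in the two unions therefore agree: $\phi_X^{r,t}(c)\ne0$ if and only if $w^{r,t}(v)\neq 0$. For each such $r$, I apply the already proved first claim at scale $r$ to the vector $w^{r,t}(v)$. This gives $\spt\big(\phi_X^{r,t}(c)\big)=\spt\big(w^{r,t}(v)\big)$. Taking unions yields $\overline{\spt}(c)=\overline{\spt}(v)=J'$.

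I do not expect a real obstacle; the argument is bookkeeping. The one point to watch is the backward direction, where the inverse intertwining relation involving $\tau$ is genuinely needed to pull a preimage back to the interval module.
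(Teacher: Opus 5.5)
Your proof is correct and matches the paper's approach. The paper gives no written proof, only the remark that the lemma follows directly from the definition of isomorphism of persistence modules, and your argument is that verification made explicit: injectivity of $(\tau^{-1})^r$ for the forward scales $r\ge t$, the $\tau$-intertwining relation to pull preimages back for the scales $r\le t$, and the first claim applied at each scale $r\ge t$ for $\overline{\spt}$. (Incidentally, the paper's superscript conventions in both definitions of $\spt$ already agree with your reading; only the ranges $r\le t$ and $r\ge t$ are left implicit.)
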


\begin{lemma}{\label{lemma:decombosable_same_support_2}}
      Suppose $\mathbb{H}_p(X)$ is decomposable, $\mathbb{H}_p(X)\cong \bigoplus_{k\in K} \mathbb{I}_{J_k}$, and $\tau^t\colon H_p(\Rips(X,t))\to  \bigoplus_{k\in K}I^t_{J_k}$ is a persistence isomorphism. If $ \spt(c)=J$ for $c\in H_p(\Rips(X,t),$ then $\spt(\tau^t(c))=J$ . 
      Furthermore, if $\overline{\spt}(c)=J'$ for $v\in \bigoplus_{k\in K}I^t_{J_k}$, then $\overline{\spt}(\tau^t(c))=J'$.
\end{lemma}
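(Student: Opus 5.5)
This is immediate from the definition of an isomorphism of persistence modules. The key is that the persistence maps commute with $\tau$, so the conditions defining $\spt$ transfer verbatim. I will prove the statement for $\tau^t$ directly; Lemma \ref{lemma:decombosable_same_support_1} is the same argument with $(\tau^{-1})^t$ in place of $\tau^t$.

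\textbf{Persistence support.} Fix $c\in H_p(\Rips(X,t))$, nonzero, and recall that $\spt(c)$ is the union of two sets.

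First take the forward part. For $s\ge t$, the commutation relation gives
\[
\Big(\bigoplus_{k\in K}\mathsf{i}^{s,t}_{J_k}\Big)(\tau^t(c))=\tau^s\big(\phi_X^{s,t}(c)\big).
\]
Since $\tau^s$ is injective, this is nonzero if and only if $\phi_X^{s,t}(c)\neq 0$.

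Next take the backward part. For $s\le t$, suppose $c=\phi_X^{t,s}(c')$. Then
\[
\tau^t(c)=\Big(\bigoplus_k \mathsf{i}^{t,s}_{J_k}\Big)(\tau^s(c')),
\]
so $\tau^t(c)$ lies in the image. Conversely, suppose $\tau^t(c)=\bigoplus_k\mathsf{i}^{t,s}_{J_k}(w)$. Using the second commutation relation, involving $(\tau^{-1})$, we get
\[
c=(\tau^{-1})^t\Big(\bigoplus_k\mathsf{i}^{t,s}_{J_k}(w)\Big)=\phi_X^{t,s}\big((\tau^{-1})^s(w)\big).
\]
Hence $c\in\im\phi_X^{t,s}$. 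Both parts of the union therefore coincide, and $\spt(\tau^t(c))=\spt(c)=J$.

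\textbf{Total persistence support.} For $s\ge t$, the forward step shows that $\phi_X^{s,t}(c)\neq0$ if and only if $\bigoplus_k\mathsf{i}^{s,t}_{J_k}(\tau^t(c))\neq 0$. The commutation relation also shows that the latter element equals $\tau^s(\phi_X^{s,t}(c))$.

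Applying the persistence-support equality to each such element, now at level $s$, gives
\[
\spt\big(\phi_X^{s,t}(c)\big)=\spt\big(\tau^s(\phi_X^{s,t}(c))\big).
\]
Taking the union over all admissible $s$ yields $\overline{\spt}(\tau^t(c))=\overline{\spt}(c)=J'$.

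The only point needing care is the backward (image) part of $\spt$. It is the one place where both commutation relations of the isomorphism are needed, together with invertibility at the level $s$, to pull a preimage back; everything else is bookkeeping.
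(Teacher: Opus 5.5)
Your proof is correct and takes the same approach as the paper. The paper gives no written proof, only the remark that this lemma and Lemma~\ref{lemma:decombosable_same_support_1} follow directly from the definition of an isomorphism of persistence modules; you carry out that verification, and you rightly read the misprinted hypothesis ``for $v\in\bigoplus_{k}I^t_{J_k}$'' as $c\in H_p(\Rips(X,t))$. One small remark: the second commutation relation you use for the image part is not an independent ingredient, since it follows from the first relation together with the invertibility of each $\tau^s$.
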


\subsection{Decomposable persistence modules and quasi-isometry}
Throughout this section, we use the following simplified notation whenever 
$\mathbb{H}_p(X) \cong \bigoplus_{k\in K}\mathbb{I}_{J_k}$, $\mathbb{H}_p(Y) \cong \bigoplus_{l\in L}\mathbb{I}_{J_l}$, and $f \in \QI_{(L,\varepsilon,\delta)}(X, Y)$. Let $g_f = \Phi(f) \in \QI_{(L,L(\varepsilon+2\delta),L\varepsilon)}(Y,X)$ be the fixed quasi-inverse of $f$. We denote that the persistence mappings are 
\begin{align*}
\iX^{t,s}&:=\bigoplus_{k\in K}\mathsf{i}_{J_k}^{t,s} \colon \bigoplus_{k\in K}I^s_{J_k}\to \bigoplus_{k\in K}I^t_{J_k}
\end{align*}
and
\begin{align*}
\iY^{t,s}&:=\bigoplus_{l\in L}\mathsf{i}_{J_l}^{t,s} \colon \bigoplus_{l\in L}I^s_{J_l}\to \bigoplus_{l\in L}I^t_{J_l}.
\end{align*}
Let $\tau_X^t\colon  H_p(\Rips(X,t)) \to \bigoplus_{k\in K}I_{J_k}^t$ and $\tau_Y^t\colon H_p(\Rips(Y,t))\to \bigoplus_{l\in L}I_{J_l}^t$ be the persistence isomorphisms, and $(\tau_X^{-1})^t$, $(\tau_Y^{-1})^t$ their inverses respectively. We denote 
\begin{align*}
    \tildeF&:= \tau_Y^{\varphi_f(t)}\circ f_*\circ (\tau_X^{-1})^t\colon \bigoplus_{k\in K}I^t_{J_k} \to  \bigoplus_{l\in L}I^{\varphi_f(t)}_{J_l}
    \end{align*}
    and
    \begin{align*}
      \tildeGf&:= \tau_X^{\vartheta_f(t)}\circ ({g_f})_*\circ (\tau_Y^{-1})^t\colon  \bigoplus_{l\in L}I^{t}_{J_l} \to  \bigoplus_{k\in K}I^{\vartheta_f(t)}_{J_k}
\end{align*}
the induced homomorphisms in the interval persistence modules.
Moreover, denote that $\mathcal{B}_X^t$ and $\mathcal{B}_Y^t$ are the persistence natural bases with respect to spaces $X$ and $Y$. 

Reader might already notice that the commuting properties of $f_*$ and $(g_f)_*$ are preserved, i.e.
\begin{align} \tildeGf \circ  \tildeF&=\iX^{\vartheta_f(\varphi_f(s)),s}\colon  \bigoplus_{k\in K}I^{s}_{J_k} \to  \bigoplus_{k\in K}I^{\vartheta_f(\varphi_f(s))}_{J_k},
\\
 \tildeF \circ  \tildeGf&=\iY^{\varphi_f(\vartheta_f(s)),s}\colon  \bigoplus_{l\in L}I^{s}_{J_l} \to  \bigoplus_{l\in L}I^{\varphi_f(\vartheta_f(s))}_{J_l}, \\
 \tildeF \circ \iX^{t,s}&=\iY^{\varphi_f(t),\varphi_f(s)}\circ \tildeF\colon \bigoplus_{k\in K}I^{s}_{J_k}\to \bigoplus_{l\in L}I^{\varphi_f(t)}_{J_l}   \text{, and }\\
  \tildeGf \circ \iY^{t,s}&=\iX^{\vartheta_f(t),\vartheta_f(s)}\circ \tildeGf \colon \bigoplus_{l\in L}I^{s}_{J_l} \to  \bigoplus_{k\in K}I^{\vartheta_f(t)}_{J_k}  .
\end{align}
It is natural to define also stable elements for interval persistence families. 

\begin{definition}
   Suppose $\mathbb{H}_p(X)$ is decomposable and $\mathbb{H}_p(X)\cong \bigoplus_{k\in K} \mathbb{I}_{J_k}$. We say that $v\in  \bigoplus_{k\in K}I^{s}_{J_k}$ is $(L,\varepsilon,\delta)$-stable if $\tau_X^s(v) \in H_p(\Rips(X,s))$ is $(L,\varepsilon,\delta)$-stable. 
\end{definition}

The following proposition gives, for quasi-isometric spaces, a correspondence of the stable persistence natural basis elements in the persistence natural bases.

\begin{proposition}\label{lemma:existence_basiselement}
       Suppose $\mathbb{H}_p(X)$ is decomposable, $\mathbb{H}_p(X)\cong \bigoplus_{k\in K} \mathbb{I}_{J_k}$,  and $f \in \QI_{(L,\varepsilon,\delta)}(X, Y)$.
       If $\mathbb{H}_p(Y)$ is decomposable, $\mathbb{H}_p(Y) \cong \bigoplus_{l\in L}\mathbb{I}_{J_l}$, and $e^t_k \in \mathcal{B}^t_X$ is $(L,\varepsilon,\delta)$-stable, then there exists $\overline{e}^{\varphi_f(t)}_l\in \mathcal{B}^{\varphi_f(t)}_Y$ such that \[
       \inf\spt(\overline{e}^{\varphi_f(t)}_l)\in [\vartheta_f^{-1}(\inf\spt(e^t_k)),\varphi_f(\inf\spt(e^t_k))]\]
       and \[
       \sup\spt(\overline{e}^{\varphi_f(t)}_l)\in [\vartheta_f^{-1}(\sup\spt(e^t_k)),\varphi_f(\sup\spt(e^t_k))].\]
       Furthermore, $l\in K_{\mathcal{B}^{\varphi_f(t)}_Y}(\tildeF(e^t_k))$.
\end{proposition}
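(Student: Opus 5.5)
We transport Theorem \ref{thm:sup-inf-interval2} to the interval modules and then pick a suitable basis element in the support of the image. Set $c=(\tau_X^{-1})^t(e^t_k)\in H_p(\Rips(X,t))$; it is $(L,\varepsilon,\delta)$-stable by definition. By Lemma \ref{lemma:decombosable_same_support_1}, $\spt(c)=\spt(e^t_k)=J_k$, and by Lemma \ref{lemma:spt_ospt_same}, $\inf\overline{\spt}(c)=\inf\spt(c)$. Theorem \ref{thm:sup-inf-interval2} then gives
\[
\sup\spt(f_*c)\in[\vartheta_f^{-1}(\sup J_k),\varphi_f(\sup J_k)],\qquad
\inf\spt(f_*c)\in[\vartheta_f^{-1}(\inf J_k),\varphi_f(\inf J_k)].
\]
Proposition \ref{prop:stable-classes2} gives $f_*c\neq 0$. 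Let $w=\tildeF(e^t_k)=\tau_Y^{\varphi_f(t)}(f_*c)$. By Lemma \ref{lemma:decombosable_same_support_2}, $\spt(w)=\spt(f_*c)$, and $w\neq0$, so $K_{\mathcal B_Y^{\varphi_f(t)}}(w)$ is nonempty.

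Write $w=\sum_{l\in K_w} \overline e_l^{\varphi_f(t)}$, where $K_w=K_{\mathcal B_Y^{\varphi_f(t)}}(w)$ and the coefficients are $1$ over $\Z_2$. Lemma \ref{lemma:gen_elem_spt} gives $\inf\spt(w)=\max_{l\in K_w}\inf J_l$ and $\sup\spt(w)=\max_{l\in K_w}\sup J_l$. We must choose one $l\in K_w$ for which both $\inf J_l$ and $\sup J_l$ lie in the required intervals. The natural choice is an $l$ attaining $\max\sup J_l$; if $K_w$ is infinite, first note that $w$ is a finite sum, so $K_w$ is finite and the maxima are attained. This $l$ gives $\sup J_l=\sup\spt(w)$, which lies in the correct interval. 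For the infimum we have $\inf J_l\le\inf\spt(w)\le\varphi_f(\inf J_k)$, so the upper bound holds automatically.

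\textbf{Main obstacle:} the lower bound $\inf J_l\ge\vartheta_f^{-1}(\inf J_k)$. We argue it directly rather than through $w$. Suppose $\vartheta_f(\inf J_l)<\inf J_k$ and pick $r\in J_l$ with $\vartheta_f(r)<\inf J_k$ and $r\le\varphi_f(t)$. Then
\[
\tildeGf(\overline e^{\varphi_f(t)}_l)=\tildeGf\circ\iY^{\varphi_f(t),r}(\overline e^r_l)=\iX^{\vartheta_f\varphi_f(t),\vartheta_f(r)}\bigl(\tildeGf(\overline e^r_l)\bigr),
\]
so this image has no $k$-component: it lies in the image of a persistence map from a scale below $\inf J_k$, and $e_k$ does not exist there. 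On the other hand, summing over $K_w$,
\[
\tildeGf(w)=\iX^{\vartheta_f\varphi_f(t),t}(e^t_k)=e_k^{\vartheta_f\varphi_f(t)}\neq 0,
\]
where nonvanishing follows from stability, $\lambda(t)<\sup J_k$. Hence some $l\in K_w$ must contribute a $k$-component, and every such $l$ satisfies $\inf J_l\ge\vartheta_f^{-1}(\inf J_k)$.

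This leaves a conflict, and handling it is the delicate step: the $l$ chosen for the infimum must also satisfy the supremum bounds. For any $l$ contributing the $k$-component, the upper bound $\sup J_l\le\sup\spt(w)\le\varphi_f(\sup J_k)$ holds. The lower bound $\sup J_l\ge\vartheta_f^{-1}(\sup J_k)$ follows by the analogous argument: if $\sup J_l<\vartheta_f^{-1}(\sup J_k)$, push $\overline e_l$ forward past $\sup J_l$, where it dies, and use the commutation of $\tildeGf$ with $\iY$ to show that its $k$-component vanishes below $\sup J_k$, contradicting $e_k$ surviving there. We therefore select $l$ among the indices contributing the $k$-component of $\tildeGf(w)$; all four bounds then hold and $l\in K_w$.
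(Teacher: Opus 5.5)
Your argument is correct, but it is organized differently from the paper's. The paper argues by contradiction on the whole expansion of $v=\tildeF(e^t_k)$. It assumes every basis element in that expansion violates one of the two lower bounds, and splits $v=w+z$ into the part $w$ born too early (with admissible death) and the part $z$ dying too early. It then fixes a single scale $s<\vartheta_f^{-1}(\inf J_k)$ at which all of $w$ already exists, and a single scale $s'<\vartheta_f^{-1}(\sup J_k)$ past which $z$ is dead. From $\iX^{\vartheta_f(s'),t}(e^t_k)=\iX^{\vartheta_f(s'),\vartheta_f(s)}\bigl(\tildeGf(w^s)\bigr)\neq 0$ it concludes that $e_k$ would be born before $\inf J_k$.

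You instead use the $e_k$-coordinate of $\tildeGf(\overline e^{\varphi_f(t)}_l)$ as a certificate. Write $\lambda(t)=\vartheta_f(\varphi_f(t))$. These coordinates, summed over your index set $K_w$, give the $e_k$-coordinate of $\iX^{\lambda(t),t}(e^t_k)=e_k^{\lambda(t)}$, which is $1$, so some $l\in K_w$ has a nonzero one. You then show, one element and one endpoint at a time, that a nonzero coordinate rules out two failures:
\begin{itemize}
\item $\inf J_l<\vartheta_f^{-1}(\inf J_k)$, by pulling back to a scale $r$ with $\vartheta_f(r)\notin J_k$;
\item $\sup J_l<\vartheta_f^{-1}(\sup J_k)$, by pushing forward to $s'$ with $\varphi_f(t)\le\sup J_l<s'$, where $\iX^{\vartheta_f(s'),\lambda(t)}$ fixes the $e_k$-coordinate because $\lambda(t)\le\vartheta_f(s')<\sup J_k$.
\end{itemize}
The upper bounds come from Theorem \ref{thm:sup-inf-interval2} and Lemma \ref{lemma:gen_elem_spt} exactly as in the paper.

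The ingredients are the same: $\tildeGf\circ\tildeF=\iX^{\lambda(t),t}$, commutation with the persistence maps, and the interval structure. Your version is direct rather than by contradiction, and it needs no scales that work simultaneously for a whole group of basis elements. It also describes the admissible witnesses explicitly: any $l$ with nonzero coefficient in $\tildeF(e^t_k)$ whose $\tildeGf$-image has a nonzero $e_k$-coordinate. The paper's version stays inside the support calculus for sums, which it reuses in Proposition \ref{lemma:two_different_elements}. Your opening ``natural choice'' of an $l$ maximizing $\sup J_l$ is never used in the final argument and can be dropped.
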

\begin{proof}
Suppose that 
$e^t_k \in \mathcal{B}^t_X$ is $(L,\varepsilon,\delta)$-stable. By Lemmas \ref{lemma:decombosable_same_support_2} and \ref{lemma:spt_ospt_same} and Theorem \ref{thm:sup-inf-interval2}, 
\begin{align*}
    \inf\spt(\tildeF(e^t_k))\in [\vartheta_f^{-1}(\inf\spt(e^t_k)),\varphi_f(\inf\spt(e^t_k))],
    \end{align*}
and
\begin{align*}    
\sup\spt(\tildeF(e^t_k))\in [\vartheta_f^{-1}(\sup\spt(e^t_k)),\varphi_f(\sup\spt(e^t_k))].\end{align*}

Suppose that $\mathbb{H}_p(Y)\cong \bigoplus_{l\in L}\mathbb{I}_{J_l}$.
Recall that each  $\bigoplus_{l\in L}I^s_{J_l}$, $s\in \R$ is a vector space, and thus \[
v:=\tildeF(e^t_k)=\bigoplus_{\overline{e}^{\varphi_f(t)}_l\in \mathcal{B}_Y^{\varphi_f(t)}}a_l\overline{e}^{\varphi_f(t)}_l,
\]
for finitely many $a_l\neq 0$. 
Also recall that \[
\inf\spt(v)=\max_{l'\in K_{\mathcal{B}_Y^{\varphi_f(t)}}(v)}\inf \spt(\overline{e}_{l'}^{\varphi_f(t)})\]
and 
\[
\sup\spt(v)=\max_{{l'}\in K_{\mathcal{B}_Y^{\varphi_f(t)}}(v)}\sup \spt(\overline{e}_{l'}^{\varphi_f(t)}).\] 

The claim holds if any $\overline{}{e}^{\varphi_f(t)}_{l'}$, $l'\in K_{\mathcal{B}_Y^{\varphi_f(t)}}(v)$ has the property that 
\[
\inf\spt(\overline{e}^{\varphi(t)}_{l'})\in[\vartheta_f^{-1}(\inf\spt(e^t_k)),\varphi_f(\inf\spt(e^t_k))],\]
and
\[
\sup\spt(\overline{e}^{\varphi(t)}_{l'})\in[\vartheta_f^{-1}(\sup\spt(e^t_k)),\varphi_f(\sup\spt(e^t_k))].\]
Also, if $v=\overline{e}_{l'}^{\varphi_f(t)}$, the claim holds by Lemma \ref{lemma:decombosable_same_support_1}.

Thus suppose, that $v=\bigoplus_{l\in L}a_l\overline{e}^{\varphi(t)}_l$, at least for two non-zero coefficients $a_l$. Suppose furthermore that for every basis element 
$\overline{e}^{\varphi_f(t)}_{l'}$, $l'\in K_{\mathcal{B}_Y^{\varphi_f(t)}}(v)$, either the supremum of the support or the infimum of the support does not belong to the desired interval
i.e.,
 \begin{align}\label{assump:inf}
\inf\spt(\overline{e}^{\varphi_f(t)}_{l'})<\vartheta_f^{-1}(\inf\spt(e^t_k))\end{align}
 or
 \begin{align}\label{assump:sup}
   \sup\spt(\overline{e}^{\varphi_f(t)}_{l'})<\vartheta_f^{-1}(\sup\spt(e^t_k)). \end{align}
We will show that this leads to a contradiction.

Denote  
\begin{align*}
    K^1&= \left\{ l'\in K_{\mathcal{B}_Y^{\varphi_f(t)}}(v)\ \middle\vert \begin{array}{l}
    \vartheta_f^{-1}(\sup\spt(e^t_k))\leq\sup\spt(\overline{e}^{\varphi_f(t)}_{l'})\leq\sup\spt(v), \\
    \inf\spt(\overline{e}_{l'}^{\varphi_f(t)})<\vartheta^{-1}_f(\inf\spt(e^t_k))
  \end{array}\right\} \text{ and} \\
K^2&=K_{\mathcal{B}_Y^{\varphi_f(t)}}(v)\setminus K^1.
\end{align*}

By the assumptions \eqref{assump:inf} and \eqref{assump:sup}, and Lemma \ref{lemma:gen_elem_spt} there are at least one index in both $K^1$ and $K^2$. 

Denote that $w^{\varphi_f(t)}=\bigoplus_{l\in K^1}\overline{e}^{\varphi(t)}_l$. Fix $s$ such that
\[
\inf(\spt(w^{\varphi_f(t)}))<s<\vartheta^{-1}_f(\inf\spt(e^t_k)).
\] 
Moreover $\overline{e}^s_{l'}\in \mathcal{B}_Y^s$ for every $l'\in K^1$. Denote that $w^s=\bigoplus_{l\in K^1}\overline{e}^{s}_l$. 

Denote that $z^{\varphi_f(t)}=\bigoplus_{l\in K^2}\overline{e}^{\varphi(t)}_l$. Now $v=w^{\varphi_f(t)}+z^{\varphi_f(t)}$. Also fix $s'$ such that 
\[
\sup\spt(z^{\varphi_f(t)})<s'< \vartheta_f^{-1}(\sup\spt(e^t_k))
.
\]
Notably $\iY^{s',\varphi_f(t)}(z_{\varphi_f(t)})=0.$
Furthermore,
\begin{align*}
v^{s'}&:=\iY^{s',\varphi_f(t)}(v)=\iY^{s',\varphi_f(t)}(w^{\varphi_f(t)})+\iY^{s',\varphi_f(t)}(z^{\varphi_f(t)})\\&=\iY^{s',\varphi_f(t)}(w^{\varphi_f(t)})=\iY^{s',s}(w^{s}).
\end{align*}
 
Recall that $\vartheta_f(s')<\sup\spt(e^t_k)$, meaning that $ \iX^{\vartheta_f(s'),t}(e^t_k)\neq 0$, and
\begin{align*}
    \iX^{\vartheta_f(s'),t}(e^t_k) &=(\tildeGf \circ \iY^{s',\varphi_f(t)}\circ \tildeF)(e^t_k) \\&=(\tildeGf \circ \iY^{s',\varphi_f(t)})(v) =\tildeGf(v^{s'})= (\tildeGf \circ \iY^{s',s})(w^{s})\\
    &=(\iX^{\vartheta_f(s'),\vartheta_f(s)}\circ \tildeGf)(w^s).
\end{align*}
Thus $\inf\spt(e^t_k)\leq \vartheta_f(s)<\inf\spt(e^t_k)$. This is a contradiction.
Thus, the claim holds.

\end{proof}

Before we go to the next proposition, we give the following definition.
\begin{definition}
Suppose that $\mathbb{H}_p(X) \cong \bigoplus_{k\in K}\mathbb{I}_{J_k}$,  and $f \in \QI_{(L,\varepsilon,\delta)}(X, Y)$. We say that natural persistence basis elements $e^t_1,e^t_2\in \mathcal{B}^t_X$, $e^t_1\neq e^t_2$ are separated
if either \[
[\vartheta_f^{-1}(\inf\spt(e^t_1)),\varphi_f(\inf\spt(e^t_1))]\cap [\vartheta_f^{-1}(\inf\spt(e^t_2)),\varphi_f(\inf\spt(e^t_2))]= \emptyset
\]
or
\[
[\vartheta_f^{-1}(\sup\spt(e^t_1)),\varphi_f(\sup\spt(e^t_1))]\cap [\vartheta_f^{-1}(\sup\spt(e^t_2)),\varphi_f(\sup\spt(e^t_2))]= \emptyset
\] holds. Moreover, we say that they are 
coarsely identical if 
\[
[\vartheta_f^{-1}(\inf\spt(e^t_1)),\varphi_f(\inf\spt(e^t_1))]\cap [\vartheta_f^{-1}(\inf\spt(e^t_2)),\varphi_f(\inf\spt(e^t_2))]\neq \emptyset
\]
and
\[
[\vartheta_f^{-1}(\sup\spt(e^t_1)),\varphi_f(\sup\spt(e^t_1))]\cap [\vartheta_f^{-1}(\sup\spt(e^t_2)),\varphi_f(\sup\spt(e^t_2))]\neq \emptyset.
\]
\end{definition}

It is clear from Proposition \ref{lemma:existence_basiselement} that the assignment $e^t_k\mapsto e^{\varphi_f(t)}_l$ is an injection for stable, separated natural persistence basis elements. The same holds for stable, coarsely identical natural persistence basis elements, which we will show in the next proposition. 

\begin{proposition}
    
\label{lemma:two_different_elements}
Suppose $\mathbb{H}_p(X)$ is decomposable, $\mathbb{H}_p(X)\cong \bigoplus_{k\in K} \mathbb{I}_{J_k}$,  and $f \in \QI_{(L,\varepsilon,\delta)}(X, Y)$.      
If $\mathbb{H}_p(Y)$ is decomposable, $\mathbb{H}_p(Y) \cong \bigoplus_{l\in L}\mathbb{I}_{J_l}$, and $e^t_1,e^t_2\in \mathcal{B}^t_X$ are $(L,\varepsilon,\delta)$-stable coarsely identical basis elements,
then there exist $\overline{e}^{\varphi_f(t)}_{1},\overline{e}^{\varphi_f(t)}_{2} \in \mathcal{B}^{\varphi_f(t)}_Y$, $\overline{e}^{\varphi_f(t)}_{1}\neq\overline{e}^{\varphi_f(t)}_{2}$ such that 
\begin{align*}
\inf\spt(\overline{e}^{\varphi_f(t)}_{1})&\in [\vartheta_f^{-1}(\inf\spt(e^t_{1})),\varphi_f(\inf\spt(e^t_{1}))],\\
\sup\spt(\overline{e}^{\varphi_f(t)}_{1})&\in [\vartheta_f^{-1}(\sup\spt(e^t_{1})),\varphi_f(\sup\spt(e^t_{1}))],\\
\inf\spt(\overline{e}^{\varphi_f(t)}_{2})&\in [\vartheta_f^{-1}(\inf\spt(e^t_{2})),\varphi_f(\inf\spt(e^t_{2}))], \text{ and }\\
\sup\spt(\overline{e}^{\varphi_f(t)}_{2})&\in [\vartheta_f^{-1}(\sup\spt(e^t_{2})),\varphi_f(\sup\spt(e^t_{2}))],
 \end{align*}
\end{proposition}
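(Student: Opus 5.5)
The plan is to apply Proposition \ref{lemma:existence_basiselement} to each of $e^t_1$ and $e^t_2$ separately, and to rule out that the two outputs coincide by a linear-algebra argument on the span of $\tildeF(e^t_1)$ and $\tildeF(e^t_2)$. The point is that any basis element produced by the earlier argument for $e^t_i$ lies in $K_{\mathcal{B}^{\varphi_f(t)}_Y}(\tildeF(e^t_i))$. The danger is therefore that both $e^t_1$ and $e^t_2$ are matched only to one and the same $\overline{e}_l$. To exclude this, I would work with the whole set of \emph{admissible} indices
\[
A_i=\{l\in K_{\mathcal{B}^{\varphi_f(t)}_Y}(\tildeF(e^t_i)) \colon \inf\spt(\overline e_l)\in I_i,\ \sup\spt(\overline e_l)\in S_i\},
\]
where $I_i$ and $S_i$ denote the two target intervals for $e^t_i$. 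By Proposition \ref{lemma:existence_basiselement}, each $A_i$ is nonempty.

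The goal is a Hall-type statement: there exist distinct $l_1\in A_1$ and $l_2\in A_2$. This fails only if $A_1=A_2=\{l\}$ for a single index $l$. So I assume this and aim for a contradiction.

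Consider $u=e^t_1+e^t_2$. Since both summands are stable and lie in a direct sum of interval modules, the sum is nonzero, and its persistence support has infimum $\max\inf$ and supremum $\max\sup$ of the two supports (Lemma \ref{lemma:gen_elem_spt}). Hence $u$ is stable and $\tildeF(u)=\tildeF(e^t_1)+\tildeF(e^t_2)\neq0$ by Proposition \ref{prop:stable-classes2}. In this sum the coefficient of $\overline e_l$ cancels over $\Z_2$. I would then rerun the contradiction argument from the proof of Proposition \ref{lemma:existence_basiselement}: split $K(\tildeF(u))$ into $K^1$ (early births) and $K^2$ (early deaths), and use the identity
\[
\tildeGf\circ\iY^{s',\varphi_f(t)}\circ\tildeF=\iX^{\vartheta_f(s'),t}.
\]
This should show that $\tildeF(u)$ must contain a basis element whose support endpoints lie in the interval hull determined by $u$, namely $[\vartheta_f^{-1}(\max\inf),\ \varphi_f(\max\inf)]$ and similarly for the supremum. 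Here the coarse-identity hypothesis is needed: the intervals $I_1$ and $I_2$ intersect, and so do $S_1$ and $S_2$, so these hulls relate well to $I_i$ and $S_i$.

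To get a genuinely new index, I would instead apply the argument to the single element $e^t_{j}$ with the larger $\sup\spt$ (or, on ties, the larger $\inf\spt$), but projected. Concretely, replace $\tildeF(e^t_j)$ by $\tildeF(e^t_j)-\tildeF(e^t_{i})$ when $\overline e_l$ appears in both, and use that $\tildeGf$ of the projected element still recovers $\iX^{\vartheta_f(s'),t}(e^t_j)$ modulo the $e_i$-component. The $e_i$-component is killed by the direct sum structure, because the persistence maps preserve each summand $\mathbb{I}_{J_k}$. This should force an index $l'\neq l$ in $K(\tildeF(e^t_j))$ satisfying the inequalities for $e^t_j$, which contradicts $A_j=\{l\}$.

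I expect the main obstacle to be exactly this projection step. $\tildeGf$ is not diagonal with respect to the natural bases, so I must argue through the coordinate of $e_j$ in $\tildeGf(\cdot)$, which is nonzero by the displayed identity, and not through the whole vector. I would handle this by composing with the coordinate projection onto the $k=j$ summand. That projection commutes with $\iX$, which should make the inf/sup contradiction argument go through verbatim for each $j$ separately while tracking which $Y$-indices contribute.
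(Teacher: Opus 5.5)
Projecting onto the coordinate $\pi_j$ of the summand $\mathbb{I}_{J_j}$ is sound, and it isolates one element differently from the paper. Since $\pi_j$ commutes with $\iX$, the identity $\iX^{\vartheta_f(s'),t}(e^t_1+e^t_2)=\iX^{\vartheta_f(s'),\vartheta_f(s)}(\tildeGf(w^s))$ together with $\pi_j\iX^{\vartheta_f(s'),t}(e^t_1+e^t_2)\neq 0$ forces $\vartheta_f(s)\in J_j$. The paper instead pushes to a scale past the other element's death, or uses $\inf\overline{\spt}(e^t_1+e^t_2)$ when the deaths coincide, and that is what creates its case split. However, your closing step does not work as stated, for two reasons. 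Write $a_k=\inf\spt(e^t_k)$, $b_k=\sup\spt(e^t_k)$, $v=\tildeF(e^t_1+e^t_2)$, and $K(\cdot)$ for $K_{\mathcal{B}^{\varphi_f(t)}_Y}(\cdot)$.

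\emph{First gap.} Over $\Z_2$, your ``projected'' element $\tildeF(e^t_j)-\tildeF(e^t_i)$ is just $v$, and the argument only ever sees $v$. So the index it produces lies in $K(v)=K(\tildeF(e^t_1))\triangle K(\tildeF(e^t_2))$; nothing places it in $K(\tildeF(e^t_j))$. Your sets $A_i$ are required to lie in $K(\tildeF(e^t_i))$. Hence an index of $K(\tildeF(e^t_i))\setminus K(\tildeF(e^t_j))$ that is admissible for $e^t_j$ but not for $e^t_i$ contradicts neither $A_1=\{l\}$ nor $A_2=\{l\}$. The remedy is to drop the membership condition, which the proposition does not need. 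It then suffices to find some $n\in K(v)$ admissible for $e^t_1$ or for $e^t_2$, since $l\notin K(v)$ and $l$ is admissible for both.

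\emph{Second gap.} The argument of Proposition \ref{lemma:existence_basiselement} does not carry over verbatim. There, only ``born too early'' and ``dies too early'' had to be ruled out, because every component of $\tildeF(e^t_k)$ is automatically born by $\varphi_f(a_k)$ and dead by $\varphi_f(b_k)$ (Lemma \ref{lemma:gen_elem_spt} with Theorem \ref{thm:sup-inf-interval2}). Measured against $e^t_j$ with $b_j=\max\{b_1,b_2\}$, the components of $v$ still die by $\varphi_f(b_j)$. But if $a_j<a_i$, some may be born in $(\varphi_f(a_j),\varphi_f(a_i)]$ and die after $\vartheta_f^{-1}(b_j)$. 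Such components are neither killed by $\iY^{s',\varphi_f(t)}$ nor in the image of $\iY^{\varphi_f(t),s}$ for $s<\vartheta_f^{-1}(a_j)$, so no contradiction is reached.

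This is exactly where coarse identity must enter, and your final argument never invokes it. Such a component cannot come from $\tildeF(e^t_j)$, so it lies in $K(\tildeF(e^t_i))$ and dies by $\varphi_f(b_i)$. Since the birth intervals meet and $a_j<a_i$, we have $\vartheta_f^{-1}(a_i)\le\varphi_f(a_j)$. Also $\vartheta_f^{-1}(b_i)\le\vartheta_f^{-1}(b_j)$. So the component is admissible for $e^t_i$. (The paper tacitly needs the same observation for its claim $\sup\spt(z^{\varphi_f(t)})<\vartheta_f^{-1}(b_1)$ in Case 2.)

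With these two repairs your argument closes and handles all orderings of $a_1,a_2,b_1,b_2$ uniformly, which is a real simplification over the paper's case analysis. Your intermediate ``hull'' claim can fail when the later-dying element is born first, since then $\inf\overline{\spt}(e^t_1+e^t_2)=\min\{a_1,a_2\}$; but you do not end up relying on it.
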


The proof of Proposition \ref{lemma:two_different_elements} is rather elementary and long. It is divided into different cases depending on the natural basis elements $e_1^t$ and $e_2^t$ persistence supports. Since each case follows the same core idea, we present here the main ideas before heading to the actual proof. 

The idea is to study the sum element $e_1^t+e_2^t$, and its image under the map $\tildeF$. Notably, $\tildeF(e_1^t+e_2^t)$ can be written as a sum of natural basis elements of $\mathcal{B}_Y^{\varphi_f(t)}$. We suppose that none of these natural basis elements has the persistence support we want, similarly to what we did in the proof of Proposition \ref{lemma:existence_basiselement}. 
Utilizing the knowledge of $\inf\spt(\tildeF(e_1^t+e_2^t))$ and $\sup\spt( \tildeF(e_1^t+e_2^t))$ respect to $\spt(e_1^t+e_2^t)$, and the knowledge of each natural basis element forming the $\tildeF(e_1^t+e_2^t)$, we will end to the contradiction. Given this heuristic proof, we are ready to present the actual proof.  

\begin{proof} [Proof of Proposition \ref{lemma:two_different_elements}]
Suppose that persistence natural basis elements $e_1^t,e_2^t\in \mathcal{B}_X^t $ are $(L,\varepsilon,\delta)$-stable and coarsely identical. Denote that $a_1=\inf\spt(e^t_1)$, $b_1=\sup\spt(e^t_1)$, and $a_2=\inf\spt(e^t_2)$, $b_2=\sup\spt(e^t_2)$.
By Proposition \ref{lemma:existence_basiselement} we know that for $(L,\varepsilon,\delta)$-stable $e_1^t,e_2^t\in \mathcal{B}_X^t $ there exist persistence basis elements $\overline{e}_{l'}^{\varphi_f(t)},\overline{e}_{l''}^{\varphi_f(t)}\in  \mathcal{B}^{\varphi_f(t)}_Y $ such that 
\begin{align*}
    &\inf\spt(\overline{e}_{l'}^{\varphi_f(t)})\in [\vartheta_f^{-1}(a_1),\varphi_f(a_1)], \\
    &\sup\spt(\overline{e}_{l'}^{\varphi_f(t)}) \in [\vartheta_f^{-1}(b_1),\varphi_f(b_1)], \\
    &\inf\spt(\overline{e}_{l''}^{\varphi_f(t)})\in [\vartheta_f^{-1}(a_2),\varphi_f(a_2)], \text{ and} \\
    &\sup\spt(\overline{e}_{l''}^{\varphi_f(t)}) \in [\vartheta_f^{-1}(b_2),\varphi_f(b_2)].
\end{align*} Furthermore $l'\in K_{ \mathcal{B}^{\varphi_f(t)}_Y }(\Tilde{f}_*(e_1^t))$, and $l''\in K_{ \mathcal{B}^{\varphi_f(t)}_Y }(\Tilde{f}_*(e_2^t))$.
Since intervals defined by $e_1^t,e_2^t$ intersect, we may assume that $l'=l''$. Otherwise, the claim holds.

Since $\Tilde{f_*}(e_1^t)$ is a sum of finitely many persistence basis elements, and $l'\in K_{ \mathcal{B}^{\varphi_f(t)}_Y }(\Tilde{f}_*(e_1^t))$, we write that
\[
\Tilde{f_*}(e_1^t)=\overline{e}_{l'}^{\varphi_f(t)}+\bigoplus_{n\in N}x_n^{\varphi_f(t)},
\] 
where $N\subset L$ is finite, $l'\notin N$, and $x_n^{\varphi_f(t)}\in \mathcal{B}_Y^{\varphi_f(t)}$ for every $n\in N$. Similarly, we write that 
\[
 \Tilde{f_*}(e_2^t)=\overline{e}_{l'}^{\varphi_f(t)}+\bigoplus_{m\in M}x_m^{\varphi_f(t)},\] where $M\subset L$ is finite, $l'\notin M$, and $x_m^{\varphi_f(t)}\in \mathcal{B}_Y^{\varphi_f(t)}$ for every $m\in M$.
Notably $\Tilde{f_*}(e_1^t)\neq \Tilde{f_*}(e_2^t)$, meaning $N\neq M$. Since, $\Tilde{f_*}(e_1^t+e_2^t)\neq0$,
\[
\Tilde{f_*}(e_1^t+e_2^t)=\bigoplus_{n\in N}x_n^{\varphi_f(t)}+ \bigoplus_{m\in M}x_m^{\varphi_f(t)}=\bigoplus_{n\in (N\cup M)\setminus(N\cap M)}x_n^{\varphi_f(t)}\neq0.
\]
We want to show that at least for one basis vector that is $x_n^{\varphi_f(t)}$, $n\in K_{\mathcal{B}_Y^{\varphi_f(t)}}(\tildeF(e_1^t+e_2^t))=(N\cup M)\setminus(N\cap M)$, we have the endpoints of the persistence support of $x_n^{\varphi_f(t)}$ belong to the intervals defined by $e_1^t$ or $e_2^t$. We divide the remaining proof into two parts.

\medskip
\emph{Case 1: $a_1\leq a_2$ and $b_1\leq b_2$}. One can easily verify that \begin{align*}
    &\sup\spt(e^t_1+e^t_2)=b_2,\\
    &\inf \spt(e^t_1+e^t_2)=a_2, \text{ and } \\
    &\inf\overline{\spt}(e^t_1+e^t_2)=a_2.  
\end{align*}
That means by Theorem \ref{thm:sup-inf-interval2} that
\begin{align}
    &\inf \spt(\Tilde{f_*}(e_1^t+e_2^t))\in [\vartheta_f^{-1}(a_2),\varphi_f(a_2)]    
    \end{align}
     and
\begin{align}
    \sup \spt(\Tilde{f_*}(e_1^t+e_2^t))\in [\vartheta_f^{-1}(b_2),\varphi_f(b_2)] \label{eq:sum_sup}.
\end{align}
We suppose that there does not exists natural basis element $x_n^{\varphi_f(t)}\in \mathcal{B}^{\varphi_f(t)}_Y$, for which $n\in K_{\mathcal{B}_Y^{\varphi_f(t)}}(\tildeF(e_1^t+e_2^t))$ has the property that endpoints of the persistence support stay in intervals defined by $e_1^t$ or $e_2^t$. We show that this leads to a contradiction.
Denote that
\begin{align*}
    K^1= \left\{ n \in K_{\mathcal{B}_Y^{\varphi_f(t)}}(\tildeF(e_1^t+e_2^t))\ \middle\vert \begin{array}{l}
    \vartheta_f^{-1}(b_2)\leq\sup\spt(x_n^{\varphi_f(t)})\leq \varphi_f(b_1), \\
    \inf\spt(x_n^{\varphi_f(t)})<\vartheta^{-1}_f(a_1)
  \end{array}\right\},
\end{align*}
\begin{align*}
    K^2= \left\{ n \in K_{\mathcal{B}_Y^{\varphi_f(t)}}(\tildeF(e_1^t+e_2^t))\ \middle\vert \begin{array}{l}
     \varphi_f(b_1)<\sup\spt(x_n^{\varphi_f(t)})\leq \varphi_f(b_2), \\
 \inf\spt(x_n^{\varphi_f(t)})<  \vartheta^{-1}_f(a_2)
 \end{array}\right\},
\end{align*}
and \[
K^3=K_{\mathcal{B}_Y^{\varphi_f(t)}}(\tildeF(e_1^t+e_2^t))\setminus(K^1\cup K^2).\] 
By equation \eqref{eq:sum_sup}, there exists at least one index $m\in K^1\cup K^2$. 
Denote that $w^{\varphi_f(t)}:=\bigoplus_{m\in K^1\cup K^2}x_m^{\varphi_f(t)}$, and $z^{\varphi_f(t)}:=\bigoplus_{m\in K^3}x_m^{\varphi_f(t)}$. That is 
\[
\Tilde{f_*}(e_1^t+e_2^t)=\bigoplus_{n\in  K_{\mathcal{B}_Y^{\varphi_f(t)}}(\tildeF(e_1^t+e_2^t))}x_n^{\varphi_f(t)}=w^{\varphi_f(t)}+ z^{\varphi_f(t)}.
\]
We consider cases $b_1=b_2$ and $b_1<b_2$ separately.

\medskip
\emph{Subcase 1.1: $b_1=b_2$}. Now $K^2=\emptyset$. That means $\inf\spt(w^{\varphi_f(t)})<\vartheta_f^{-1}(a_1)$. That also means that $z^{\varphi_f(t)}\neq 0$, and $\sup\spt(z^{\varphi_f(t)})<\vartheta^{-1}_f(b_1)$. Fix $s$ and $r$ such that $\inf\spt(w^{\varphi_f(t)})<s<\vartheta^{-1}_f(a_1)$ and $\sup\spt(z^{\varphi_f(t)})<r<\vartheta^{-1}_f(b_1)$.  That is $\iY^{r,\varphi_f(t)}(z^{\varphi_f(t)})=0$, and furthermore $\iY^{r,\varphi_f(t)}(w^{\varphi_f(t)}+z^{\varphi_f(t)})=\iY^{r,\varphi_f(t)}(w^{\varphi_f(t)})$.

Notably for every $n\in K^1$, $x_n^s\in \mathcal{B}_Y^s$. Denote that $w^{s}=\bigoplus_{n\in{K^1}} x_n^{s}$. Now $w^{\varphi_f(t)}=\iY^{\varphi_f(t),s}(w^{s})$.
Moreover,
\begin{align*}
(\iX^{\vartheta_f(r),\vartheta_f(s)}\circ\tildeGf)(w^s)&=(\tildeGf\circ\iY^{r,s})(w^s)\\
&=(\tildeGf\circ\iY^{r,\varphi_f(t)}\circ\iY^{\varphi_f(t),s})(w^s)\\
&=(\tildeGf\circ\iY^{r,\varphi_f(t)})(w^{\varphi_f(t)}) \\
&= (\tildeGf\circ\iY^{r,\varphi_f(t)})(w^{\varphi_f(t)}+z^{\varphi_f(t)})\\
&= (\tildeGf\circ\iY^{r,\varphi_f(t)}\circ \Tilde{f}_*) (e_1^t+e_2^t)\\
&=\iX^{\vartheta_f(r),t} (e_1^t+e_2^t)\neq 0.
\end{align*}
That means
$\inf\overline{\spt}(e^t_1+e^t_2)\leq \vartheta_f(s)<a_1$, which is contradiction.  

\medskip
\emph{Subcase 1.2: $b_1<b_2$}. In this case, $\inf(\spt(w^{\varphi_f(t)}))<\vartheta_f^{-1}(a_2)$. Fix $s$ such that  $\inf(\spt(w^{\varphi_f(t)}))<s<\vartheta_f^{-1}(a_2)$. We note that $\sup(\spt(z^{\varphi_f(t)}))<\vartheta_f^{-1}(b_2)$. 
Fix $r$ such that $\max\{\vartheta_f^{-1}(b_1),\sup\spt(z^{\varphi_f(t)})\}<r<\vartheta^{-1}_f(b_2)$. That is $\iY^{r,\varphi_f(t)}(z^{\varphi_f(t)})=0$, and furthermore $\iY^{r,\varphi_f(t)}(w^{\varphi_f(t)}+z^{\varphi_f(t)})=\iY^{r,\varphi_f(t)}(w^{\varphi_f(t)})$.
Again for every $n\in K^1\cup K^2$, $x_n^s\in \mathcal{B}_Y^s$. Denote that $w^{s}=\bigoplus_{n\in{K^1}\cup K^2} x_n^{s}$. $w^{\varphi_f(t)}=\iY^{\varphi_f(t),s}(w^{s})$.
Now \begin{align*}
    (\iX^{\vartheta_f(r),\vartheta_f(s)}\circ\tildeGf)(w^s)=\iX^{\vartheta_f(r),t} (e_1^t+e_2^t)
=\iX^{\vartheta_f(r),t} (e_2^t)\neq 0.
\end{align*}

That means $\inf{\spt}(e^t_2)\leq \vartheta_f(s)<a_2$, which is contradiction.

\medskip
\emph{Case 2: $a_1\leq a_2$ and $b_2< b_1$}. One can now verify that \begin{align*}
    &\sup\spt(e^t_1+e^t_2)=b_1,\\
    &\inf \spt(e^t_1+e^t_2)=a_2, \text{ and } \\
    &\inf\overline{\spt}(e^t_1+e^t_2)=a_1.  
\end{align*}
That means by Theorem \ref{thm:sup-inf-interval2} that
\begin{align}
    &\inf \spt(\Tilde{f_*}(e_1^t+e_2^t))\in [\vartheta_f^{-1}(a_1),\varphi_f(a_2)] \end{align} and \begin{align}
    &\sup \spt(\Tilde{f_*}(e_1^t+e_2^t))\in [\vartheta_f^{-1}(b_1),\varphi_f(b_1)] \label{eq:sum_sup2}.
    \end{align}

Recall that 
$ \Tilde{f_*}(e_1^t)=\overline{e}_{l'}^{\varphi_f(t)}+\bigoplus_{n\in N}x_n^{\varphi_f(t)}$ for finite $N\subset L$, where $x_n^{\varphi_f(t)}\in \mathcal{B}_Y^{\varphi_f(t)}$, and $ \Tilde{f_*}(e_2^t)=\overline{e}_{l'}^{\varphi_f(t)}+\bigoplus_{m\in M}x_m^{\varphi_f(t)}$ for finite $M\subset L$, $x_m^{\varphi_f(t)}\in \mathcal{B}_Y^{\varphi_f(t)}$.

Notably for every $n\in N$, $\inf\spt(x^{\varphi_f(t)}_n)\leq \varphi_f(a_1)$ and for every $m\in M$,  $\sup\spt(x^{\varphi_f(t)}_m)\leq \varphi_f(b_2)$. We denote

\begin{align*}
    K^1= \left\{ n \in  K_{\mathcal{B}_Y^{\varphi_f(t)}}(\tildeF(e_1^t+e_2^t))\ \middle\vert \begin{array}{l}
    \vartheta_f^{-1}(b_1)\leq\sup\spt(x_n^{\varphi_f(t)})\leq \varphi_f(b_1), \\
    \inf\spt(x_n^{\varphi_f(t)})<\vartheta^{-1}_f(a_1)
  \end{array}\right\}
\end{align*}
and $K^2=( K_{\mathcal{B}_Y^{\varphi_f(t)}}(\tildeF(e_1^t+e_2^t)))\setminus K^1$.

By equation \eqref{eq:sum_sup2}, there exists at least one index $m\in K^1$. 
Denote that $w^{\varphi_f(t)}:=\bigoplus_{m\in K^1}x_m^{\varphi_f(t)}$.
Denote that $z^{\varphi_f(t)}:=\bigoplus_{m\in K^2}x_m^{\varphi_f(t)}$. That is 
\[
\Tilde{f_*}(e_1^t+e_2^t)=\bigoplus_{n\in  K_{\mathcal{B}_Y^{\varphi_f(t)}}(\tildeF(e_1^t+e_2^t))}x_n^{\varphi_f(t)}=w^{\varphi_f(t)}+ z^{\varphi_f(t)}.
\]

Notably $\inf(\spt(w^{\varphi_f(t)}))<\vartheta_f^{-1}(a_1)$. Fix $s$ such that  $\inf(\spt(w^{\varphi_f(t)}))<s<\vartheta_f^{-1}(a_1)$. We note that $\sup(\spt(z^{\varphi_f(t)}))<\vartheta_f^{-1}(b_1)$. 
Fix $r$ such that $\max\{\vartheta_f^{-1}(b_2),\sup\spt(z^{\varphi_f(t)})\}<r<\vartheta^{-1}_f(b_1)$. That is $\iY^{r,\varphi_f(t)}(z^{\varphi_f(t)})=0$, and furthermore $\iY^{r,\varphi_f(t)}(w^{\varphi_f(t)}+z^{\varphi_f(t)})=\iY^{r,\varphi_f(t)}(w^{\varphi_f(t)})$.
 For every $n\in K^1$, $x_n^s\in \mathcal{B}_Y^s$. Denote that $w^{s}=\bigoplus_{n\in{K^1}} x_n^{s}$. Notably $w^{\varphi_f(t)}=\iY^{\varphi_f(t),s}(w^{s})$.
Now \begin{align*}
    (\iX^{\vartheta_f(r),\vartheta_f(s)}\circ\tildeGf)(w^s)=\iX^{\vartheta_f(r),t} (e_1^t+e_2^t)
=\iX^{\vartheta_f(r),t} (e_1^t)\neq 0.
\end{align*}

That means $\inf{\spt}(e^t_1)\leq \vartheta_f(s)<a_1$, which is contradiction.
    
\end{proof}

\subsection{Persistence diagram}\label{subsec:persdiagrams}

Now we are ready to define persistence diagrams.

\begin{definition}\label{def:persistence_diagrams}
If $\mathbb{H}_p(X)$ is decomposable, $\mathbb{H}_p(X)\cong \bigoplus_{k\in K} \mathbb{I}_{J_k}$, then the persistence diagram map is $\mdgm_{p,X}\colon K\to \R^2$, $k\mapsto (\inf(J_k),\sup(J_k))$. 
We call the image of the persistence map, $\dgm_p(X):=\im(\mdgm_{p,X}) $, a simplified persistence diagram, and the graph $\Dgm_p(X)=\{(k,\mdgm_{p,X}(k))\colon k\in K\}$ an indexed persistence diagram.
\end{definition}

In this article, indexed persistence diagrams serve an analytical role, whereas simplified persistence diagrams are used for illustration. 
The indexed persistence diagram fully describes the structure of the space $X$ in each scale.
Notably, by Theorem \ref{thm:indexing}, the persistence diagram map and the indexed persistence diagram are unique up to indexing.
Furthermore, each indexed persistence diagram point has a corresponding natural persistence basis element.
Let $(k,(a,b))\in \Dgm_p(X)$, then for every $t\in \R$ satisfying $a<t<b$, there exists the natural persistence basis element $e^t_k\in \mathcal{B}_X^t$, such that $a=\inf(\spt(e^t_k))$ and $b=\sup(\spt(e^t_k))$. Before continuing to the results at the diagram level, let's make a remark about how, in the literature, persistent diagrams are generally defined and the relation to the above definition.

 \begin{remark}
 In the literature, the persistent diagrams of  $\bigoplus_{k\in K}\mathbb{I}_{J_k}$ are the multiset of pairs $(\inf(J_k),\sup(J_k))$, which is essentially the same as an indexed persistence diagram. In the persistence diagrams, $\inf(J_k)$ is often referred to as the birth time, and  $\sup(J_k)$ is the death time. Especially, when $X$ is finite, the intervals are half-open, form $J_k=[a,b)_k$. In more general cases, one can see decorated diagrams, see e.g. \cite{Chazal_SSPM_book}.   

In order to define the bottleneck distance properly, the persistence diagram is extended with diagonal points $(a,a) \in\R^2$ with infinite multiplicity. Denote this persistent diagram of $X$ with all the diagonal points with  infinite multiplicity by
$\overline{\Dgm}_p(X)$.
The bottleneck distance, appearing in \eqref{eq:bottleneck_result} is then
\begin{align}\label{def:bottleneck}
d_{\text{bottleneck}}(\overline{\Dgm}_p(X),\overline{\Dgm}_p(Y))=\inf_{\substack{\text{bijection} \\ \eta \colon \overline{\Dgm}_p(X)\to \overline{\Dgm}_p(Y) }} \sup_{x\in \overline{\Dgm}_p(X)} \norm{x-\eta(x)}_\infty.
\end{align}

 \end{remark}

Next, we define the stable elements of an indexed persistence diagram. 
\begin{definition}
    Suppose that  $\mathbb{H}_p(X) \cong \bigoplus_{k\in K}\mathbb{I}_{J_k}$, we call an indexed diagram point $(k,(a,b))\in \Dgm_p(X)$  a $(L,\varepsilon,\delta)$-stable if $\lambda_{(L,\varepsilon,\delta)}(a)<b$. Furthermore, we call the set 
    \[
    \Dgm_p^{(L,\varepsilon,\delta)}(X):=\{(k,(a,b))\in \Dgm_p(X)\colon \lambda_{(L,\varepsilon,\delta)}(a)<b \} 
    \] 
    the stable indexed persistence diagram of $X$.
\end{definition}

For every $(L,\varepsilon,\delta)$-stable indexed diagram point, one can find a $(L,\varepsilon,\delta)$-stable homology class. Since  $\lambda_{(L,\varepsilon,\delta)}(\varphi^{-1}_f(\vartheta^{-1}_f(s))=s$ this is immediate. We record this in the following lemma.

\begin{lemma}
    Suppose that  $\mathbb{H}_p(X) \cong \bigoplus_{k\in K}\mathbb{I}_{J_k}$. If $(k,(a,b))\in \Dgm^{(L,\varepsilon,\delta)}_p(X)$ then, for every $s$ satisfying $\lambda_{(L,\varepsilon,\delta)}(a)<s<b$, the basis element $e_k^{\varphi^{-1}_f(\vartheta^{-1}_f(s))}\in \mathcal{B}_X^{\varphi^{-1}_f(\vartheta^{-1}_f(s))}$ is an $(L,\varepsilon,\delta)$-stable .  
\end{lemma}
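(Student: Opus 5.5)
The plan is to unwind the definitions directly. The statement has two parts: $e_k^{u}$ lies in $\mathcal{B}_X^{u}$, and it is $(L,\varepsilon,\delta)$-stable. Here
\[
u:=\varphi_f^{-1}(\vartheta_f^{-1}(s)).
\]
The only computation is the identity
\[
\lambda_{(L,\varepsilon,\delta)}(u)=\vartheta_f(\varphi_f(u))=s.
\]
It holds because $\lambda_{(L,\varepsilon,\delta)}=\vartheta_f\circ\varphi_f$ and both $\varphi_f$ and $\vartheta_f$ are strictly increasing affine bijections of $\R$ (slope $L\ge 1$).

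First I will show that $u$ lies in $J_k$, so that $e_k^u$ is a genuine element of the persistence natural basis $\mathcal{B}^u_X$. Since $\lambda_{(L,\varepsilon,\delta)}(t)=L^2t+2L(\varepsilon+\delta)\ge t$ for $t\ge 0$, we get $u\le \lambda(u)=s<b$. For the lower bound, $\lambda$ is strictly increasing and $\lambda(a)<s=\lambda(u)$, so $a<u$. Hence $a<u<b$. By the description following Definition \ref{def:persistence_diagrams}, for such $u$ we have $e_k^u\in\mathcal{B}_X^u$ with $\inf\spt(e_k^u)=a$ and $\sup\spt(e_k^u)=b$.

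Next comes stability. By definition, $e_k^u$ is $(L,\varepsilon,\delta)$-stable when the corresponding homology class $(\tau_X^{-1})^u(e_k^u)$ (the paper writes $\tau_X^u$, but it means the class corresponding to $e_k^u$) satisfies
\[
\lambda_{(L,\varepsilon,\delta)}(u)<\sup\spt\big((\tau_X^{-1})^u(e_k^u)\big).
\]
By Lemma \ref{lemma:decombosable_same_support_1}, the persistence isomorphism preserves supports, so this supremum equals $\sup\spt(e_k^u)=\sup J_k=b$. The required inequality therefore reads $\lambda(u)=s<b$, which is exactly the hypothesis on $s$. Nonzeroness of the class is clear, since $e_k^u\ne 0$ and $\tau$ is an isomorphism.

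The main subtlety is ensuring that $u$ actually lies in the interior of $J_k$. The endpoints of $J_k$ may be open or closed, so I will rely on the strict inequalities $a<u<b$ derived above rather than on any endpoint membership. A secondary point is nonnegativity of scales: if $u<0$ the Rips modules vanish. This cannot happen here, because $a\ge 0$ and $u>a$. Everything else is direct substitution, so the proof should be short.
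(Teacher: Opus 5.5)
Your proof is correct and follows the paper's argument, which consists of the single observation that $\lambda_{(L,\varepsilon,\delta)}(\varphi_f^{-1}(\vartheta_f^{-1}(s)))=s<b$. You also make explicit several details the paper leaves implicit: that $a<u<b$, so $e_k^u\in\mathcal{B}_X^u$ exists; that the persistence isomorphism preserves supports; and that the stability definition should read $(\tau_X^{-1})^u$ rather than $\tau_X^u$.
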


Now there is a corollary from Propositions \ref{lemma:existence_basiselement} and \ref{lemma:two_different_elements}. 
\begin{corollary}\label{cor:persdgm_X_to_Y}
   Suppose that $\mathbb{H}_p(X)$, and  $\mathbb{H}_p(Y)$ are decomposable, and $f\colon X\to Y$ is $(L,\varepsilon,\delta)$-quasi-isometry. There exists an injection
   \[
   \boxdot
_f\colon \Dgm_p^{(L,\varepsilon,\delta)}(X)\to \Dgm_p(Y)
   \] 
   such that for every $\boxdot_f\big((k,(a,b))\big):=(l,(c,d))$ we have that \[(c,d)\in [\vartheta_f^{-1}(a),\varphi_f(a)]\times [\vartheta_f^{-1}(b),\varphi_f(b)].\] 
\end{corollary}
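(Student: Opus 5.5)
We build $\boxdot_f$ one diagram point at a time, reducing each point to a stable natural basis element and applying Propositions \ref{lemma:existence_basiselement} and \ref{lemma:two_different_elements}.

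\emph{Step 1: attach a basis element.} Let $(k,(a,b))\in \Dgm_p^{(L,\varepsilon,\delta)}(X)$, so $\lambda_{(L,\varepsilon,\delta)}(a)<b$. By the preceding lemma, we choose a scale $s$ with $\lambda_{(L,\varepsilon,\delta)}(a)<s<b$ and set $t=\varphi_f^{-1}(\vartheta_f^{-1}(s))$. Then $e_k^t\in\mathcal{B}_X^t$ is $(L,\varepsilon,\delta)$-stable, with $\inf\spt(e_k^t)=a$ and $\sup\spt(e_k^t)=b$. The element depends on $t$, but its support $J_k$ does not.

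\emph{Step 2: define the map.} Proposition \ref{lemma:existence_basiselement} gives $\overline{e}_l^{\varphi_f(t)}\in\mathcal{B}_Y^{\varphi_f(t)}$ whose support endpoints $(c,d)=(\inf J_l,\sup J_l)$ satisfy
\[
(c,d)\in[\vartheta_f^{-1}(a),\varphi_f(a)]\times[\vartheta_f^{-1}(b),\varphi_f(b)].
\]
We set $\boxdot_f((k,(a,b)))=(l,(c,d))\in\Dgm_p(Y)$. Since the diagram is unique up to indexing (Theorem \ref{thm:indexing}), it suffices to fix one decomposition and make a choice for each $k$.

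\emph{Step 3: injectivity.} This is the main obstacle, because the choices in Step 2 are made independently for each point. Take two stable points $k_1\neq k_2$.
\begin{itemize}
\item If their target boxes are disjoint in the birth coordinate or in the death coordinate (the separated case), any admissible images differ, since $(c,d)$ lies in the respective boxes.
\item If the boxes intersect in both coordinates (the coarsely identical case), Proposition \ref{lemma:two_different_elements} only gives \emph{some} pair of distinct images. A pairwise argument does not immediately give a globally consistent injective choice.
\end{itemize}
This requires two technical fixes.

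First, the two basis elements must live at a common scale $t$. We take $t$ inside $J_{k_1}\cap J_{k_2}$ with stability for both. The coarsely identical condition should guarantee that such a $t$ exists, which needs a short check.

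Second, we need a global matching. We form the bipartite graph joining each stable $k$ to all admissible $l$. Propositions \ref{lemma:existence_basiselement} and \ref{lemma:two_different_elements} give a local Hall-type condition. For arbitrary finite families, we would run the same sum argument as in the proof of Proposition \ref{lemma:two_different_elements}: suppose finitely many coarsely identical elements $e_{k_1},\dots,e_{k_m}$ had fewer than $m$ admissible images. Then the images $\tildeF(e_{k_i})$, projected onto admissible components, would be linearly dependent. Applying $\tildeGf$ and the interleaving identities, as in that proof, would then contradict the linear independence of the $e_{k_i}$ at a suitable scale.

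With Hall's condition on finite subsets, Hall's theorem (and compactness, or Zorn's lemma, when $K$ is infinite) gives the injection $\boxdot_f$. The box property holds by construction.
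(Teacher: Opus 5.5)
Your Steps 1--2 and the separated case are fine, and they are essentially all the paper offers. The paper declares the corollary a direct consequence of Propositions \ref{lemma:existence_basiselement} and \ref{lemma:two_different_elements}. So your diagnosis, that pairwise distinctness does not by itself give a globally consistent injection, is correct and goes beyond the text.

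However, Step 3 does not close the gap, and its first ingredient is false. Write $\lambda=\lambda_{(L,\varepsilon,\delta)}=\vartheta_f\circ\varphi_f$. For stable bars $[a_1,b_1)$ and $[a_2,b_2)$ with $a_1\le a_2$ and $b_1\le b_2$, coarse identity only says $a_2\le\lambda(a_1)$ and $b_2\le\lambda(b_1)$. A common scale $t$ at which both $e^t_1,e^t_2$ exist and are stable needs $t\ge a_2$ and $\lambda(t)<b_1$. Concretely, take $L=1$, $\varepsilon=1$, $\delta=0$, so $\varphi_f(t)=\vartheta_f(t)=t+1$ and $\lambda(t)=t+2$. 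The bars $[0,3)$ and $[2,5)$ are both stable, and their birth boxes $[-1,1],[1,3]$ and death boxes $[2,4],[4,6]$ meet. Yet a common stable scale would need $t\ge 2$ and $t+2<3$. Proposition \ref{lemma:two_different_elements} assumes both elements are stable in the same $\mathcal{B}^t_X$, so it says nothing about such pairs. Even the two-element instance of your Hall condition is therefore unavailable.

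Second, the Hall condition for larger families is the real content of the corollary, and you only assert it. Your sketch needs a common stable scale for all $e_{k_i}$, which fails in general as shown above. It then needs a new multi-term statement: for every nonzero sum $v$ of stable basis elements, $\tildeF(v)$ has a component admissible for \emph{some} summand.

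\begin{itemize}
\item Theorem \ref{thm:sup-inf-interval2} only controls the endpoints of $\spt(\tildeF(v))$. These involve $\inf\spt(v)$, $\inf\overline{\spt}(v)$ and $\sup\spt(v)$, which typically come from different summands.
\item Components of $\tildeF(v)$ can be ``mixed'': birth admissible for one summand, death admissible for another.
\item The proof of Proposition \ref{lemma:two_different_elements} handles this with an ad hoc two-term case split. Its contradiction comes from support endpoints, not from linear independence of the $e_{k_i}$, so ``as in that proof'' does not transfer.
\end{itemize}

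Third, passing to infinite $K$ by compactness requires each $k$ to have finitely many admissible $l$, and decomposability alone does not give this. For example, let $k_0$ be adjacent to $l_1,l_2,\dots$ and each $k_i$ adjacent only to $l_i$. Every finite Hall condition holds, but there is no matching saturating all $k$.

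A route that avoids all three issues, at least for pointwise finite-dimensional modules, is the induced matching of Bauer and Lesnick for the morphism $f_*\colon\mathbb{H}_p(X)\to\mathbb{H}_p(Y)\circ\varphi_f$. By Corollary \ref{cor:QI-composition}, its kernel and cokernel are $\lambda$-trivial. The induced matching is injective by construction and matches every bar $[a,b)$ with $\lambda(a)<b$. Its endpoint inequalities translate exactly into $c\in[\vartheta_f^{-1}(a),\varphi_f(a)]$ and $d\in[\vartheta_f^{-1}(b),\varphi_f(b)]$.
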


Now, we reverse the roles of the quasi-isometry $f\colon X\to Y$ and its quasi-inverse $g_f\colon Y\to X$. Notably, then $f$ acts as a quasi-inverse of $g_f$, and the scale function $\lambda_{(L,\varepsilon,\delta)}= \vartheta_f\circ \varphi_f$ is reversed, i.e., we consider $\varphi_f \circ \vartheta_f$.  We get the following corollary.

\begin{corollary}\label{cor:persdgm_Y_to_X}
   Suppose that $\mathbb{H}_p(X)$, and  $\mathbb{H}_p(Y)$ are decomposable, and $f\colon X\to Y$ is $(L,\varepsilon,\delta)$-quasi-isometry. There exists an injection
   \[
   \boxdot_{g_f}\colon \Dgm_p^{(L,(L\varepsilon+2\delta),L\varepsilon)}(Y)\to \Dgm_p(X)
   \] 
   such that for every $\boxdot_{g_f}\big((k,(a,b))\big):=(l,(c,d))$ we have that \[(c,d)\in [\varphi_f^{-1}(a),\vartheta_f(a)]\times [\varphi_f^{-1}(b),\vartheta_f(b)].\] 
\end{corollary}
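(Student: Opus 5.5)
The plan is to obtain Corollary \ref{cor:persdgm_Y_to_X} by applying Corollary \ref{cor:persdgm_X_to_Y} with the roles of $X$ and $Y$ exchanged. The relevant map is the quasi-inverse $g_f=\Phi(f)$, which by Proposition \ref{prop:quasi-inverse} lies in $\QI_{(L,L(\varepsilon+2\delta),L\varepsilon)}(Y,X)$. The issue is that Corollary \ref{cor:persdgm_X_to_Y}, read literally for $g_f$, would involve a second quasi-inverse $\Phi(g_f)$ and its scale functions, and these give worse constants. So instead of citing the corollary as a black box, I will rerun its proof, that is, Propositions \ref{lemma:existence_basiselement} and \ref{lemma:two_different_elements}, with the pair $(g_f,f)$ in place of $(f,g_f)$.

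First I check which properties of the pair $(f,g_f)$ those proofs actually use. They use:
\begin{itemize}
\item that $f_*$ shifts scales by $\varphi_f$ and $(g_f)_*$ shifts scales by $\vartheta_f$;
\item both composition identities in Corollary \ref{cor:QI-composition};
\item naturality of $f_*$ and $(g_f)_*$ with respect to the maps $\phi$;
\item Theorem \ref{thm:sup-inf-interval2}, whose proof uses only these facts.
\end{itemize}
All of these are symmetric under swapping $f\leftrightarrow g_f$, $X\leftrightarrow Y$ and $\varphi_f\leftrightarrow\vartheta_f$. The point to verify is the second identity $f_*\circ(g_f)_*=\phi_Y^{\varphi_f(\vartheta_f(s)),s}$, which Corollary \ref{cor:QI-composition} already provides. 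So the swapped versions hold with $g_f$ as the forward map, $f$ as its quasi-inverse, forward scale function $\vartheta_f$, backward scale function $\varphi_f$, and stability threshold $\varphi_f\circ\vartheta_f(s)=L^2s+L^2(\varepsilon+2\delta)+\varepsilon$.

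Next I identify the stability condition. With parameters $(L,L\varepsilon+2\delta,L\varepsilon)$ as written in the statement, I will check that $\lambda_{(L,L\varepsilon+2\delta,L\varepsilon)}$ dominates $\varphi_f\circ\vartheta_f$, or alternatively read the stable set directly as $\{(k,(a,b)):\varphi_f(\vartheta_f(a))<b\}$. I expect the parameter bookkeeping here to be the main, if modest, obstacle, since the indices in the statement do not obviously match $\Phi$'s parameters. Whichever form is used, the needed input is that $(k,(a,b))$ being stable gives a stable basis element. By the lemma preceding Corollary \ref{cor:persdgm_X_to_Y}, applied with $\vartheta_f^{-1}\varphi_f^{-1}$ in place of $\varphi_f^{-1}\vartheta_f^{-1}$, the element $e_k^{\vartheta_f^{-1}(\varphi_f^{-1}(s))}\in\mathcal{B}_Y$ is stable for $\varphi_f(\vartheta_f(a))<s<b$.

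Then I apply the swapped Proposition \ref{lemma:existence_basiselement} to this basis element. It yields a basis element of $\mathcal{B}_X$ whose support endpoints lie in $[\varphi_f^{-1}(a),\vartheta_f(a)]$ and $[\varphi_f^{-1}(b),\vartheta_f(b)]$. This defines $\boxdot_{g_f}$, and the box condition holds by construction.

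Finally I establish injectivity. For two distinct stable points there are two cases:
\begin{itemize}
\item If their boxes are disjoint (the separated case), their images are automatically different.
\item If their boxes intersect (the coarsely identical case), I use the swapped Proposition \ref{lemma:two_different_elements}, taking both basis elements at a common scale $t$ admissible for both intervals, to get distinct targets.
\end{itemize}
When more than two points are coarsely identical, I would fix the choice greedily, or argue as in Corollary \ref{cor:persdgm_X_to_Y} itself, whose injectivity argument I am taking as given.
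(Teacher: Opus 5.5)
Your route is the paper's. The paper proves this corollary only by the remark that one reverses the roles of $f$ and $g_f$, letting $f$ itself serve as the quasi-inverse of $g_f$ so that the threshold becomes $\varphi_f\circ\vartheta_f$. Your choice to rerun Propositions \ref{lemma:existence_basiselement} and \ref{lemma:two_different_elements} for the pair $(g_f,f)$ is what makes that remark rigorous. Citing Corollary \ref{cor:persdgm_X_to_Y} for $g_f$ instead would bring in $\Phi(g_f)$ and the weaker lower bounds $\vartheta_{g_f}^{-1}$. Your construction of a box-respecting image for each stable point is correct.

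On the bookkeeping you flagged: read the subscript as the actual parameters $(L,L(\varepsilon+2\delta),L\varepsilon)$ of $g_f$. Then $\lambda(s)=L^2s+4L^2(\varepsilon+\delta)$, which dominates $\varphi_f(\vartheta_f(s))=L^2s+(L^2+1)\varepsilon+2L^2\delta$, so your first option works. With the subscript literally as printed, $\lambda(s)=L^2s+4L^2\varepsilon+4L\delta$. This drops below $\varphi_f(\vartheta_f(s))$ when, e.g., $\varepsilon=0<\delta$ and $L>2$, and then your argument would not cover the whole stated domain.

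The genuine gap is injectivity. Proposition \ref{lemma:two_different_elements} needs both basis elements to be stable at one and the same scale $t$, and two coarsely identical stable points need not admit such a scale. Take $L=2$ and $\varepsilon=\delta=0$, so $\varphi_f(t)=\vartheta_f(t)=2t$ and every threshold reads $4a<b$. Let $(1,5)$ and $(3,13)$ be points of $\Dgm_p(Y)$.
\begin{itemize}
\item Both points are stable, but the scales at which they are stable lie in $[1,\tfrac54)$ and $[3,\tfrac{13}{4})$, which are disjoint.
\item Their boxes $[\tfrac12,2]\times[\tfrac52,10]$ and $[\tfrac32,6]\times[\tfrac{13}{2},26]$ intersect, so the points are coarsely identical.
\item Hence two separate applications of Proposition \ref{lemma:existence_basiselement} may return the same point of $\Dgm_p(X)$, and nothing you invoke forbids this.
\end{itemize}

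Moreover, even when common scales exist, pairwise-distinct targets do not yield an injection on the whole set. Consider three pairwise coarsely identical points whose boxes all contain the same two points of $\Dgm_p(X)$ and no others. They satisfy every pairwise conclusion, yet no injection exists, so a greedy choice is not guaranteed to succeed.

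The paper asserts Corollary \ref{cor:persdgm_X_to_Y} as following from the two propositions with exactly this step unargued, so ``arguing as there'' does not close the gap. You need a genuinely global matching argument, for instance a Hall-type rank count over families of stable elements, or an induced-matching construction in the spirit of Bauer--Lesnick adapted to the shifts $\varphi_f,\vartheta_f$.
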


Corollary \ref{cor:persdgm_Y_to_X} says that if $f\colon X\to Y$ is $(L,\varepsilon,\delta)$-quasi-isometry, and $\Dgm_p(Y)$ is known, then one can find the corresponding same dimensional diagram point representing a structure of $X$ for each stable persistence diagram point.

\section{Stability of the model spaces' structures}\label{sec:stab_model}
In Section, \ref{sec:model_inverse_prob}, we presented three questions \ref{q1}-\ref{q3}. Recall that we consider the model 
\[
X_{cont.}\to X_{net}\to Y_{clean} \to Y_{noisy}.
\]

Notably, we have three quasi-isometries:
$f^{(0)}\colon X_{cont.}\to X_{net}$, $x\mapsto x'$, such that $d_X(x,x')\leq \frac{1}{2}\varepsilon_0$, is a $(1,\varepsilon_0,0)$-quasi-isometry, $f^{(1)}\colon X_{net} \to Y_{clean}$ is a $(L,\varepsilon_1,0)$-quasi-isometry, and
$f^{(2)}\colon Y_{clean} \to Y_{noisy}$, $y\mapsto y+E$, $\norm{E}\leq \frac{1}{2}\varepsilon_2$, is a $(1,\varepsilon_2,0)$-quasi-isometry. Moreover, we have the quasi-inverses $g_{f^{(0)}}$, $g_{f^{(1)}}$ and $g_{f^{(2)}}$  respectively. We say that $X_{cont.}\to X_{net}\to Y_{clean} \to Y_{noisy}$ is $(L,\varepsilon_0,\varepsilon_1,\varepsilon_2)$-model. 
Recall $X_{net}\subset X_{cont.}$ is finite in the  $(L,\varepsilon_0,\varepsilon_1,\varepsilon_2)$-model. Since $f^{(1)}$ and $f^{(2)}$ are surjections,  $Y_{clean}$ and $Y_{noisy}$ are finite. By Theorem \ref{thrm:decomposable} persistence modules $\mathbb{H}_p(Y_{noisy})$, $\mathbb{H}_p(Y_{clean})$, $\mathbb{H}_p(X_{net})$ are decomposable, and there exist persistence diagram maps. We are now ready to prove the main theorem of the paper.

\MainTheorem*
\begin{proof}
    Note that $\vartheta(t)=(\vartheta_{f^{(0)}}\circ\vartheta_{f^{(1)}}\circ \vartheta_{f^{(2)}})(t)$ and $\varphi^{-1}(t)=(\varphi^{-1}_{f^{(0)}}\circ\varphi^{-1}_{f^{(1)}}\circ\varphi^{-1}_{f^{(2)}})(t)$. 

    Let $(k,(a,b))\in \Dgm^{(L,\varepsilon_0,\varepsilon_1,\varepsilon_2)}_p(Y_{noisy})$. Since we have \[
    \Dgm^{(L,\varepsilon_0,\varepsilon_1,\varepsilon_2)}_p(Y_{noisy})\subset \Dgm^{(1,\varepsilon_2,\varepsilon_2)}_p(Y_{noisy}),\] by Corollary \ref{cor:persdgm_Y_to_X} there exists an injection 
    \[
    \boxdot_{g_{f^{(2)}}\mid \Dgm^{(L,\varepsilon_0,\varepsilon_1,\varepsilon_2)}_p(Y_{noisy})}\colon \Dgm^{(L,\varepsilon_0,\varepsilon_1,\varepsilon_2)}_p(Y_{noisy})\to \Dgm_p(Y_{clean})
    \] 
    such that for $ \boxdot_{g_{f^{(2)}}\mid \Dgm^{(L,\varepsilon_0,\varepsilon_1,\varepsilon_2)}_p(Y_{noisy})}\big((k,(a,b))\big):=(n,(a',b'))\in \Dgm_p(Y_{clean})$, we have that
    \[(a',b')\in
    [\varphi_{f^{(2)}}^{-1}(a),\vartheta_{f^{(2)}}(a)]\times [\varphi_{f^{(2)}}^{-1}(b),\vartheta_{f^{(2)}}(b)].
    \]
    Then $(n,(a',b'))\in \Dgm_p^{(L,L\varepsilon_1,L\varepsilon_1)}(Y_{clean})$ by Corollary \ref{cor:persdgm_Y_to_X}, there exists an injection  \[
    \boxdot_{g_{f^{(1)}}}\colon \Dgm^{(L,L\varepsilon_1,L\varepsilon_1)}_p(Y_{clean})\to \Dgm_p(X_{net})
    \] 
    such that for $\boxdot_{g_{f^{(1)}}}\big((n,(a',b'))\big):=(m,(a'',b''))\in \Dgm_p(X_{net})$, we have that
    \[(a'',b'')\in[\varphi_{f^{(1)}}^{-1}(\varphi_{f^{(2)}}^{-1}(a)),\vartheta_{f^{(1)}}(\vartheta_{f^{(2)}}(a))]\times [\varphi_{f^{(1)}}^{-1}(\varphi_{f^{(2)}}^{-1}(b)),\vartheta_{f^{(1)}}(\vartheta_{f^{(2)}}(b))].
    \]
    Similarly $(m,(a'',b''))\in \Dgm_p^{(1,\varepsilon_0,\varepsilon_0 )}(X_{net})$, and by Corollary \ref{cor:persdgm_Y_to_X}, there exists an injection  
    \[
    \boxdot_{g_{f^{(0)}}} \colon \Dgm_p^{(1,\varepsilon_0,\varepsilon_0 )}(X_{net}) \to \Dgm_p(X_{cont.}) 
    \]
  such that for $\boxdot_{g_{f^{(0)}}}\big((m,(a'',b''))\big):=(l,(c,d))\in \Dgm_p(X_{cont.})$, we have that
  \[(c,d)\in[\varphi^{-1}(a),  \vartheta(a)  ]\times [\varphi^{-1}(b),  \vartheta(b)].
\]
Since the composition of injections is an injection, we can take \[\boxdot=\boxdot_{g_{f^{(0)}}}\circ \boxdot_{g_{f^{(1)}}}\circ \boxdot_{g_{f^{(2)}}}.\]

\end{proof}

\begin{remark}
    
Each model step increases the uncertainty in the estimate in the persistent diagram, see Figure \ref{fig:estimation_different_areas}. The reader may wonder, why one does not consider straight an $(L, L\varepsilon_0+\varepsilon_1+\varepsilon_2,0)$-quasi-isometry $\tilde{f}=f^{(0)}\circ f^{(1)}\circ f^{(2)}\colon X_{cont.} \to Y_{noisy}.$ The reason is that function $\vartheta_{\tilde{f}}(t)=Lt+L(L\varepsilon_0+\varepsilon_1+\varepsilon_2)$ leads to worse estimate than $\vartheta(t)$ in the main theorem.  
\end{remark}

\begin{remark}    
We want to emphasize that for $L\neq 1$, the so-called lifetime of a structure, i.e., the difference between death and birth times, $b-a$, for $(k,(a,b))\in\Dgm_p(Y_{noisy})$, is not a factor when applying the main theorem. This can be seen in Figure \ref{fig:estimation_different_areas}, where there is indexed diagram points $(1,(1,5))$ and $(2,(3,7))$ and $(3,(3,7))$. For each of these, the lifetime is $4$, but only from the first one, $(1,(1,5))$, we can derive information about the space $X_{cont.}$. Structures appearing on a larger scale must exist longer than those appearing on a smaller scale.   
\end{remark}

\begin{figure}[!htb]
\begin{tikzpicture}[scale=0.8]
    \draw[step=1cm, gray!30, very thin] (0,0) grid (5,8); 
    \draw[color=black, fill=green!60!black!40] (0.5800-0.1,3.2467-0.1) rectangle (1.6950+0.1,7.6950+0.1); 
    \draw[color=black, fill=orange!80!red!20] (0.5800,3.2467) rectangle (1.6950,7.6950); 
    \draw[color=black, fill=blue!60!black!40] (1-0.12,5-0.12) rectangle (1+0.12,5+0.12); 
    \draw[->] (0,0) -- (5.5,0); 
    \draw[->] (0,0) -- (0,8.5); 
    \node[below] at (2.5,-0.8) {\small{birth}}; 
    \node[left,rotate=90] at (-1,5) {\small{death}}; 
    \foreach \x in {0,1,2,3,4,5}
        \draw (\x,0) -- (\x,-0.2) node[below] {\tiny{\x}};
    \foreach \y in {0,1,2,3,4,5,6,7,8}
        \draw (0,\y) -- (-0.2,\y) node[left] {\tiny{\y}};
    \draw[-](0,0)--(5,5); 
    \draw[dashed,thick,color=black](0,0.7225)--(3,7.4725); 
    \draw[dashed,thick,color=black](0,0.7225)--(3,7.4725); 
    \node[draw, fill=red, minimum size=1mm, inner sep=0pt, shape=rectangle] at (1,5) {}; 
    \node[draw, fill=red, minimum size=1mm, inner sep=0pt, shape=rectangle] at (3,7) {};
    \node[draw, fill=red, minimum size=1mm, inner sep=0pt, shape=rectangle] at (1.2,1.4) {};
    \node[draw, fill=red, minimum size=1mm, inner sep=0pt, shape=rectangle] at (1.5,1.8) {};
    \node[draw, fill=red, minimum size=1mm, inner sep=0pt, shape=rectangle] at (5.5,4){};
    \node[right] at (5.5,4) {\tiny{Known $\dgm_p(Y_{noisy})$}}; %
    \node[draw, fill=orange!80!red!20, minimum size=1.5mm, inner sep=0pt, shape=rectangle] at (5.5,3){};
    \node[right] at (5.5,3) {\tiny{Estimation box of $\dgm_p(X_{net})$}}; %
    \node[draw, fill=blue!60!black!40, minimum size=1.5mm, inner sep=0pt, shape=rectangle] at (5.5,3.5){};
    \node[right] at (5.5,3.5) {\tiny{Estimation box of $\dgm_p(Y_{clean})$}};
     \node[draw, fill=green!60!black!40, minimum size=1.5mm, inner sep=0pt, shape=rectangle] at (5.5,2.5){};
    \node[right] at (5.5,2.5) {\tiny{Estimation box of $\dgm_p(X_{cont.})$}};
    \draw[dashed,thick,color=black] (5.3,2)--(5.6,2) {};
    \node[right] at (5.5,2) {\tiny{Threshold}};
    \end{tikzpicture}
\caption{Illustration how each $(1.1,0.1,0.01,0.12)$-model quasi-isometry effect to the estimation box of $X_{cont.}$ Let the known indexed persistence diagram of $Y_{noisy}$ be $
\Dgm_p(Y_{noisy})=
\big\{\big(1,(1,5)\big), \big(2,(3,7)\big), \big(3,(3,7)\big), \big(4,(1.2,1.4)\big) , \big(5,(1.5,1.8)\big)\big\}$
The simplified persistence diagram $\dgm_p(Y_{noisy})$ is illustrated, where red markers are the diagram points. Above dashed line (threshold) is the point $(1,(1,5))\in \Dgm_p(Y_{noisy})$ for which $\vartheta(1)<\varphi^{-1}(5)$, i.e., $(1,(1,5))\in \Dgm_p^{(1.1,0.1,0.01,0.12)}(Y_{noisy})$. The blue square is from $(1,0.12,0)$-quasi-isometry from $Y_{noisy}$ to $Y_{clean}$, the light orange rectangle is from $(1.1,0.01,0)$-quasi-isometry from $Y_{clean}$ to $X_{net}$, and the previous estimation. The light green rectangle is from $(1,0.1,0)$-quasi-isometry from $X_{net}$ to $X_{cont.}$, and the previous estimation. } \label{fig:estimation_different_areas}%

\end{figure}

\clearpage

\section{Computational results} \label{sec:computational}
\subsection{Examples on imaging: A computational example with $(4,0.03,0)$-quasi-isometry}
Here we simulate case, where $X_{cont.}\to X_{net} \to Y_{clean}\to Y_{noisy}$ is a $(4,0,0.03,0)$-model. That is we have mappings $\text{id}\colon X_{cont.} \to X_{net}$,  $\text{id}\colon Y_{clean}\to Y_{noisy}$, and $F\colon X_{net}\to Y_{clean}$ is $(4,0.03,0)$-quasi-isometry.

 We created a set $X_{cont.}\subset \R^2$ consisting of points on four disjoint circles with slightly different radii, see Figure \ref{fig:ex1_data}. The radii were 0.9, 0.95, 1, and 1.2, and the center points were $(0,0)$, $(0,-4)$, $(3,-1)$, and $(4,-4)$ correspondingly. Each circle contained 700 evenly distributed points, meaning in total there are 2800 data points in a set $X_{cont.}$. Let simply denote that $X:=X_{cont.}=X_{net}$. We denote an Euclidean distance matrix of $X$ by $\mathsf{D}_X$, $\mathsf{D}_X\in \R^{2800\times2800}$.

Furthermore, we created a distance matrix $\mathsf{D}_Y\in\R^{2800\times2800}$ that adopts the quasi-isometry inequality of $F$. This is done in the following way. We first created a working matrix $\mathsf{D}\in \R^{2800\times2800}$. Each diagonal element is zero, and each non-diagonal element of $\mathsf{D}$ corresponds to an element of $\mathsf{D}_X$ that was first multiplied by a random number between 1.5 and 4, and then a random number between 0.015 and 0.03 was added. That is
\[
\mathsf{D}[i,j]=r_1\mathsf{D}_X[i,j]+r_2,
\]
$r_1\in[1.5,4]$ and $r_2\in[0.015,0.03]$, $i,j=1,\dots,2800$, when $i\neq j$, and $\mathsf{D}[i,i]=0$.
We made sure that a final distance matrix was symmetric, and it respected the triangle inequality. This was done by running $\mathsf{D}$ through the Floyd-Warshall shortest-path algorithm \cite{algorithms}, yielding a matrix $\mathsf{D}_Y$. The matrix $\mathsf{D}_Y$ corresponds to the Euclidean distance matrix of clean (and noisy) measurement data $Y_{clean}=Y_{noisy}$. Let denote that $Y:=Y_{clean}=Y_{noisy}$. Especially, we have that
\[
1.5\mathsf{D}_X[i,j]+0.015\leq \mathsf{D}_Y[i,j]\leq4\mathsf{D}_X[i,j]+0.03.
\] 
So we had $(4,0.03,0)$-quasi-isometry from $X$ to $Y$. The data $Y$ is illustrated by using multidimensional scaling. The overall shape of the data $X$ and $Y$ is similar (up to rotation). However, the scale is different, and we can notice noisiness on the circles'  boundaries (Figures \ref{fig:ex1_dataX} and \ref{fig:ex1_dataY}).

We computed persistent homology of dimension one using Rips complexes from distance matrices $\mathsf{D}_X$ and $\mathsf{D}_Y$. The computations were done using the Ripser package for Python \cite{ctralie2018ripser}. The simplified persistence diagrams $\dgm_1(X)$ and $\dgm_1(Y)$ are shown in the Figures \ref{fig:ex1_phX} and \ref{fig:ex1_phY}. It can be seen that both diagrams have four, clearly off the diagonal, points near the birth time zero, each representing one circle on the data. Both diagrams also have one clearly off the diagonal point, with a greater birth time, representing that the four circles are positioned circularly. The indexed persistence diagram $\Dgm_1(X)$ had total six points, while $\Dgm_1(Y)$ had 523 points. Four points of $\Dgm_1(Y)$ are $(4,0.03,0)$-stable, that is for $(k,(a,b))\in\Dgm_1(Y)$, $k=1,2,3,4$, we have that $L^2a+L^2\varepsilon+\varepsilon=16a+0.51<b$. 
By using Theorem \ref{main-thm-intro} we get the estimation boxes $B_k$ defined by $(k,(a,b))$, $k=1,2,3,4$, of form \[
B_k=[\frac{a-0.03}{4},4a+0.12]\times[\frac{b-0.03}{4},4b+0.12].\] These estimation boxes are illustrated in Figure \ref{fig:synth_diagrams_estmation_boxes}. Furthermore, Theorem \ref{main-thm-intro} guarantees that in each of these boxes there exists an indexed persistence diagram point of $\Dgm_1(X)$. By studying the true indexed persistence diagram $\Dgm_1(X)$, the result can be verified.  

\begin{figure}[!h]
\centering
     \begin{subfigure}[b]{0.49\textwidth}
         \centering
         \begin{tikzpicture}[scale=0.7]
            \begin{axis}[
                 name=full,
                  axis equal,
                ]
                \addplot[
                  only marks,
                  mark=*,
                  mark size=0.1pt,
                  black,
                ]
                table {datafiles/circles_comp_example.dat};
                \draw[red, thin](axis cs:3.8,-1) rectangle (axis cs:4.1,-0.850);
                 \end{axis}
                \begin{axis}[
                  axis equal,
                  width=4cm,
                  at={(full.east)},
                  xshift=-1cm,
                 yshift=0.5cm,
                 restrict x to domain=3.8:4.1,
                  restrict y to domain=-1:-0.85,
                    axis background/.style={fill=white},
                   xtick=\empty,
                ytick=\empty,
                red
                ]
                \addplot[
                  only marks,
                  mark=*,
                  mark size=1pt,
                  black,
                ] table {datafiles/circles_comp_example.dat};
            \end{axis}
        \end{tikzpicture}
         \caption{Data $X$}
         \label{fig:ex1_dataX}
     \end{subfigure}
     \hfill
     \begin{subfigure}[b]{0.49\textwidth}
         \centering
         \begin{tikzpicture}[scale=0.7]
            \begin{axis}[
                 name=full,
                  axis equal,
                ]
                \addplot[
                  only marks,
                  mark=*,
                  mark size=0.1pt,
                  black,
                ]
                table {datafiles/mds_noisy_circles.dat};
                \draw[red, thin](axis cs:3,-3.70) rectangle (axis cs:3.7,-3.50);
                 \end{axis}
                \begin{axis}[
                  axis equal,
                  width=4cm,
                  at={(full.east)},
                  xshift=-1cm,
                 yshift=-2cm,
                 restrict x to domain=3:3.7,
                  restrict y to domain=-3.70:-3.50,
                    axis background/.style={fill=white},
                   xtick=\empty,
                ytick=\empty,
                red
                ]
                \addplot[
                  only marks,
                  mark=*,
                  mark size=1pt,
                  black,
                ] table {datafiles/mds_noisy_circles.dat};
            \end{axis}
        \end{tikzpicture}
         \caption{Data Y illustrated using MDS}
         \label{fig:ex1_dataY}
     \end{subfigure}
     \begin{subfigure}[b]{0.49\textwidth}
         \centering
        \begin{tikzpicture}[xscale=1.5,yscale=1.5]
                \draw[step=1, gray!30, very thin] (0,0) grid (3,3);
                 \node[below] at (2,-0.15) {\small{birth}}; 
                 \node[above,rotate=90] at (-0.15,1.5) {\small{death}}; 
                \foreach \x in {0,1,2,3}
                    \draw (\x,0) -- (\x,-0.03) node[below] {\tiny{\x}};
                \foreach \y in {0,1,2,3}
                    \draw (0,\y) -- (-0.02,\y) node[left] {\tiny{\y}};
                \draw[->](0,0)--(0,3.3); 
                \draw[->](0,0)--(3.3,0); 
                \draw[-](0,0)--(3,3); 
              \pgfplotstableread[col sep=space]{datafiles/X_diagram_data.dat}\datatable
              \foreach \i in {0,...,5} {
                \pgfplotstablegetelem{\i}{[index]0}\of\datatable
                \let\x\pgfplotsretval
                \pgfplotstablegetelem{\i}{[index]1}\of\datatable
                \let\y\pgfplotsretval
                    \node[draw, fill=green, minimum size=1.5mm, inner sep=0pt, shape=diamond] at (\x,\y){};
              }
             \node[above] at (1.5,3) {$\dgm_1(X)$}; %
            
        \end{tikzpicture}
         \caption{Simplified persistence diagram of data $X$ in dimension 1}
         \label{fig:ex1_phX}
     \end{subfigure}
     \hfill
     \begin{subfigure}[b]{0.49\textwidth}
         \centering
                 \begin{tikzpicture}[scale=1.15]
                \draw[step=1, gray!30, very thin] (0,0) grid (4,4);
                 \node[below] at (2,-0.15) {\small{birth}}; 
                 \node[above,rotate=90] at (-0.2,2) {\small{death}}; 
                \foreach \x in {0,1,2,3,4}
                    \draw (\x,0) -- (\x,-0.03) node[below] {\tiny{\x}};
                \foreach \y in {0,1,2,3,4}
                    \draw (0,\y) -- (-0.02,\y) node[left] {\tiny{\y}};
                \draw[->](0,0)--(0,4.3); 
                \draw[->](0,0)--(4.3,0); 
                \draw[-](0,0)--(4,4); 
              \pgfplotstableread[col sep=space]{datafiles/Y_diagram_data.dat}\datatable
              \foreach \i in {0,...,522} {
                \pgfplotstablegetelem{\i}{[index]0}\of\datatable
                \let\x\pgfplotsretval
                \pgfplotstablegetelem{\i}{[index]1}\of\datatable
                \let\y\pgfplotsretval
                    \node[draw, fill=red, minimum size=1mm, inner sep=0pt, shape=rectangle] at (\x,\y){};
              }
             \node[above] at (2,4) {$\dgm_1(Y)$}; %
            
        \end{tikzpicture}
         \caption{Simplified persistence diagram of data $Y$ in dimension 1}
         \label{fig:ex1_phY}
     \end{subfigure}
        \caption{The data $X$ consists of four disjoint circles, each having 700 evenly distributed points. The data $Y$ distance matrix $\mathsf{D}_Y$ is created from the distance matrix of $X$, $\mathsf{D}_X$, such that the quasi-isometry inequality and metric properties hold. The data $Y$ is illustrated using multidimensional scaling (MDS), where $\mathsf{D}_Y$ acts as a dissimilarity matrix. The indexed persistence diagrams are computed from $\mathsf{D}_X$ and $\mathsf{D}_Y$.}
        \label{fig:ex1_data}
\end{figure}

\begin{figure}[tbh!]
    \centering
\begin{tikzpicture}[]
\begin{scope}[xscale=2.5,yscale=1.5]
    \draw[step=1, gray!30, very thin] (0,0) grid (4,4);
  \pgfplotstableread[col sep=space]{datafiles/estbox.dat}\datatable
  \foreach \i in {0,1,2,3} {
    \pgfplotstablegetelem{\i}{[index]0}\of\datatable
    \let\x\pgfplotsretval
    \pgfplotstablegetelem{\i}{[index]1}\of\datatable
    \let\y\pgfplotsretval
    \pgfplotstablegetelem{\i}{[index]2}\of\datatable
    \let\u\pgfplotsretval
    \pgfplotstablegetelem{\i}{[index]3}\of\datatable
    \let\v\pgfplotsretval
     \fill[color=gray, opacity=0.2] (\x,\y) rectangle (\u,{min(4,\v)});
     \draw[dashed,thick,color=gray] (\x,4)--(\u,4);
  }
     \node[below] at (2,-0.15) {\small{birth}}; 
     \node[above,rotate=90] at (-0.15,2) {\small{death}}; 
    \foreach \x in {0,1,2,3,4}
        \draw (\x,0) -- (\x,-0.03) node[below] {\tiny{\x}};
    \foreach \y in {0,1,2,3,4}
        \draw (0,\y) -- (-0.02,\y) node[left] {\tiny{\y}};
    \draw[->](0,0)--(0,4.3); 
    \draw[->](0,0)--(4.3,0); 
    \draw[-](0,0)--(4,4); 
    \draw[dashed,thick,color=black](0,0.51)--(0.218125,4); %

  \pgfplotstableread[col sep=space]{datafiles/Y_diagram_data.dat}\datatable
  \foreach \i in {0,...,522} {
    \pgfplotstablegetelem{\i}{[index]0}\of\datatable
    \let\x\pgfplotsretval
    \pgfplotstablegetelem{\i}{[index]1}\of\datatable
    \let\y\pgfplotsretval
        \node[draw, fill=red, minimum size=1mm, inner sep=0pt, shape=rectangle] at (\x,\y){};
  }
  \pgfplotstableread[col sep=space]{datafiles/X_diagram_data.dat}\datatable
  \foreach \i in {0,...,5} {
    \pgfplotstablegetelem{\i}{[index]0}\of\datatable
    \let\x\pgfplotsretval
    \pgfplotstablegetelem{\i}{[index]1}\of\datatable
    \let\y\pgfplotsretval
        \node[draw, fill=green, minimum size=1.5mm, inner sep=0pt, shape=diamond] at (\x,\y){};
  }
 \node[above] at (2,4) {Simplified persistence diagrams with estimation boxes}; %
\node[draw, fill=red, minimum size=1mm, inner sep=0pt, shape=rectangle] at (4.2,2){};
\node[right] at (4.2,2) {\small{$\dgm_1(Y)$}}; %
\node[draw, fill=green, minimum size=1.5mm, inner sep=0pt, shape=diamond] at (4.2,1.75){};
\node[right] at (4.2,1.75) {\small{$\dgm_1(X)$}}; %
\node[draw, fill=gray, minimum size=1.5mm, inner sep=0pt, shape=rectangle,opacity=0.5] at (4.2,1.5){};
\node[right] at (4.2,1.5) {\small{Estimation box}}; %
\draw[dashed,thick,color=black] (4.05,1.25)--(4.2,1.25) {};
\node[right] at (4.2,1.25) {\small{Threshold}};
\end{scope}
\begin{scope}[yscale=10,xscale=800,shift={(-0.005,-2.3)}]
  \pgfplotstableread[col sep=space]{datafiles/estbox.dat}\datatable
  \foreach \i in {0,1,2,3} {
    \pgfplotstablegetelem{\i}{[index]0}\of\datatable
    \let\x\pgfplotsretval
    \pgfplotstablegetelem{\i}{[index]1}\of\datatable
    \let\y\pgfplotsretval
    \pgfplotstablegetelem{\i}{[index]2}\of\datatable
    \let\u\pgfplotsretval
    \pgfplotstablegetelem{\i}{[index]3}\of\datatable
    \let\v\pgfplotsretval
     \fill[color=gray, opacity=0.2] (\x,{max(1.5,\y)}) rectangle ({min(0.015,\u},{min(2.1,\v)});
     }
    \draw[dashed,thick,color=gray] (0.007702371403574944,2.1)--(0.015,2.1);
    \draw[dashed,thick,color=gray] (0.015,1.5)--(0.015,2.1);
  \pgfplotstableread[col sep=space]{datafiles/X_diagram_data.dat}\datatable
  \foreach \i in {2,...,5} {
    \pgfplotstablegetelem{\i}{[index]0}\of\datatable
    \let\x\pgfplotsretval
    \pgfplotstablegetelem{\i}{[index]1}\of\datatable
    \let\y\pgfplotsretval
        \node[draw, fill=green, minimum size=1.5mm, inner sep=0pt, shape=diamond] at (\x,\y){};
  }
    \draw[->](0.005,1.5)--(0.005,2.15); 
    \draw[->](0.005,1.5)--(0.016,1.5); 
    \foreach \x in {0.005,0.0075,0.01,0.015}
        \draw (\x,1.5) -- (\x,1.5-0.02) node[below] {\tiny{\x}};
    \foreach \y in {1.5,1.6,1.7,1.8,1.9,2,2.1}
        \draw (0.005,\y) -- (0.005-0.0001,\y) node[left] {\tiny{\y}};
     \node[above] at (0.01,2.15) {\large{Zoomed simplified persistence diagrams with estimation boxes}}; %
        \node[below] at (0.01,1.45) {\small{birth}}; 
     \node[above,rotate=90] at (0.0043,1.8) {\small{death}}; 
\end{scope}
\end{tikzpicture}
\caption{We have a $(4,0.03,0)$-quasi-isometry between finite space $X\subset \R^2$ and measurements $Y$. The simplified persistence diagrams of $X$ and $Y$ are presented with green diamonds and red squares, respectively. The dashed line represents the threshold $d=\varphi(\vartheta(b))=16b+0.51$; the four points of $\dgm_1(Y)$  above the threshold are $(4,0.03,0)$-stable. By Theorem \ref{main-thm-intro}, each stable diagram point defines an estimation box (gray boxes). Inside each box, there is a diagram point of $\dgm_1(X)$. The zoomed part confirms that there are indeed points of $\dgm_1(X)$ situated within these boxes.  }
\label{fig:synth_diagrams_estmation_boxes}
\end{figure}

\newpage
\subsection{Computational example on the Radon transform of a time-dependent, periodically changing 
object.}\label{subsec:comp_radon}

In this computational example, we consider an object that changes over time in a periodic manner. The object we are considering is annulus-shaped, which is composed of a single material. The change happens in the following manner over time. First, the inner radius of the annulus increases, and the outer radius stays put. Then the inner radius of the annulus does not change while the outer radius starts to increase. Then the outer radius stays in the maximum position, and the inner radius starts to increase. When the inner radius has hit the minimum, it stays there, and the outer radius starts to decrease, see the Figure \ref{fig:ex2_obj}.

In the model setting, the (discretized) object in different states, with different inner and outer radii, forms the model space $X_{cont.}$
Furthermore, suppose that we can only observe this object by taking X-ray projections. We have the discretized Radon transform from a subspace  $X_{net}$ of $X_{cont.}$, 
\[
\mathsf{R}\colon X_{net}\to Y_{clean},
\] where $Y_{clean}$ is the set of clean measurements. The $Y_{noisy}$ is then the clean measurements affected by some noise.

In the continuous setting, let $t\in [-2,2]$ and $f_t\colon \R^2\to \{0,1\}$ such that \begin{align}
    f_t &= 1_{B(3+3t,0) \setminus B(1,0)}, \text{ when } 0<t<1  \\
    f_t & = 1_{B(6,0) \setminus B(1+(t-1),0)}, \text{ when } 1\leq t \leq 2   \\
    f_t & = 1_{B(3,0) \setminus B(1-t,0)}, \text{ when } 0 \leq -t < 1 \\
    f_t & = 1_{B(3+3(-t-1),0) \setminus B(2,0)}, \text{ when } 1 \leq -t \leq 2.
\end{align}
The compact support of each mapping $f_t$ is an annulus centered on the origo of form $B(r_1)\setminus B(r_2)$, where $r_1\in[3,6]$ and $r_2 \in [1,2]$.
The Radon transforms $\mathcal{R}f_t(\theta,\cdot)$ are the same for all measuring angles $\theta \in [0,2\pi)$ due to the object symmetry, and thus it is enough to consider a fixed angle $\theta$. Also, the data behave nicely in the sense that $\mathcal{R}f_t\neq\mathcal{R}f_{t'}$, when $t\neq t'$.
We have shown in Section \ref{sec:quasi} that there exist constants $L\geq1$ and $\varepsilon>0$ such that. 
\[
\frac{1}{L}\norm{f_{t}-f_{t'}}_{L^2_0}-\varepsilon\leq \norm{\mathcal{R}f_{t}-\mathcal{R}f_{t'}}_{L^2}\leq L\norm{f_{t}-f_{t'}}_{L^2_0}+\varepsilon,
\]
where $f_t,f_{t'}\in L^2_0$, and $\norm{f_t}_{H^s_0}\leq K,$ $K>0$, $0<s<\frac{1}{2}$. 

The result that the Radon transform adapts the quasi-isometry form is more theoretical than practical. In practice, it's difficult to know or even estimate the constants $L$ and $\varepsilon$. Thus, the experiment we present is here more to highlight the similarity of the persistence diagrams when having nicely behaving data, as here. Thus, it would be reasonable to assume that having more Radon transformation of the data would give us information about the object space, even if some assumptions are violated. We will demonstrate how similar the persistence diagrams are.  

\begin{figure}
    \centering
    \begin{tikzpicture}[scale=.6]
        \node[inner sep=0pt] (russell) at (0,0)
    {\includegraphics[width=.15\textwidth]{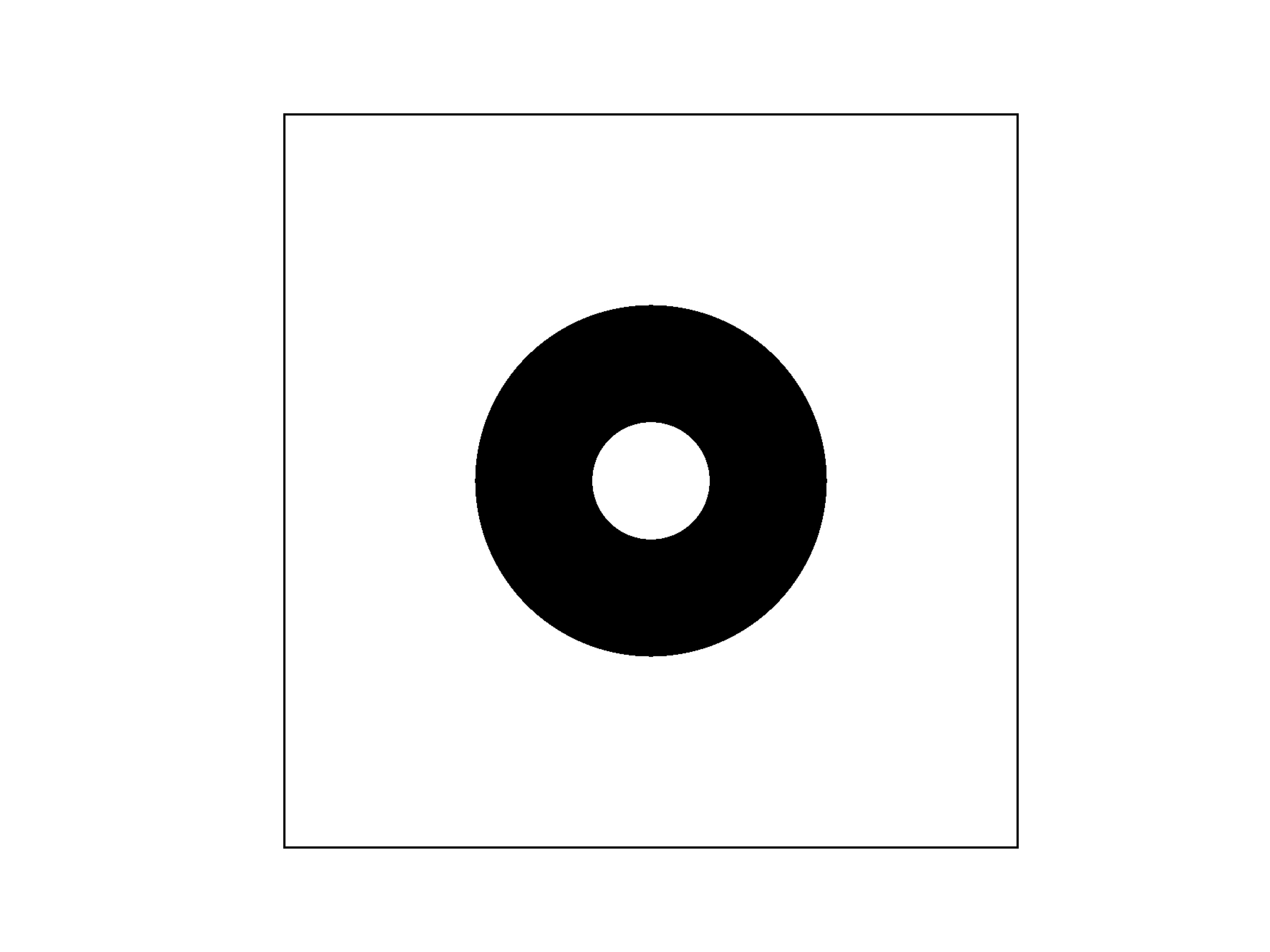}};
    \node[inner sep=0pt] (whitehead) at (3,0)
    {\includegraphics[width=.15\textwidth]{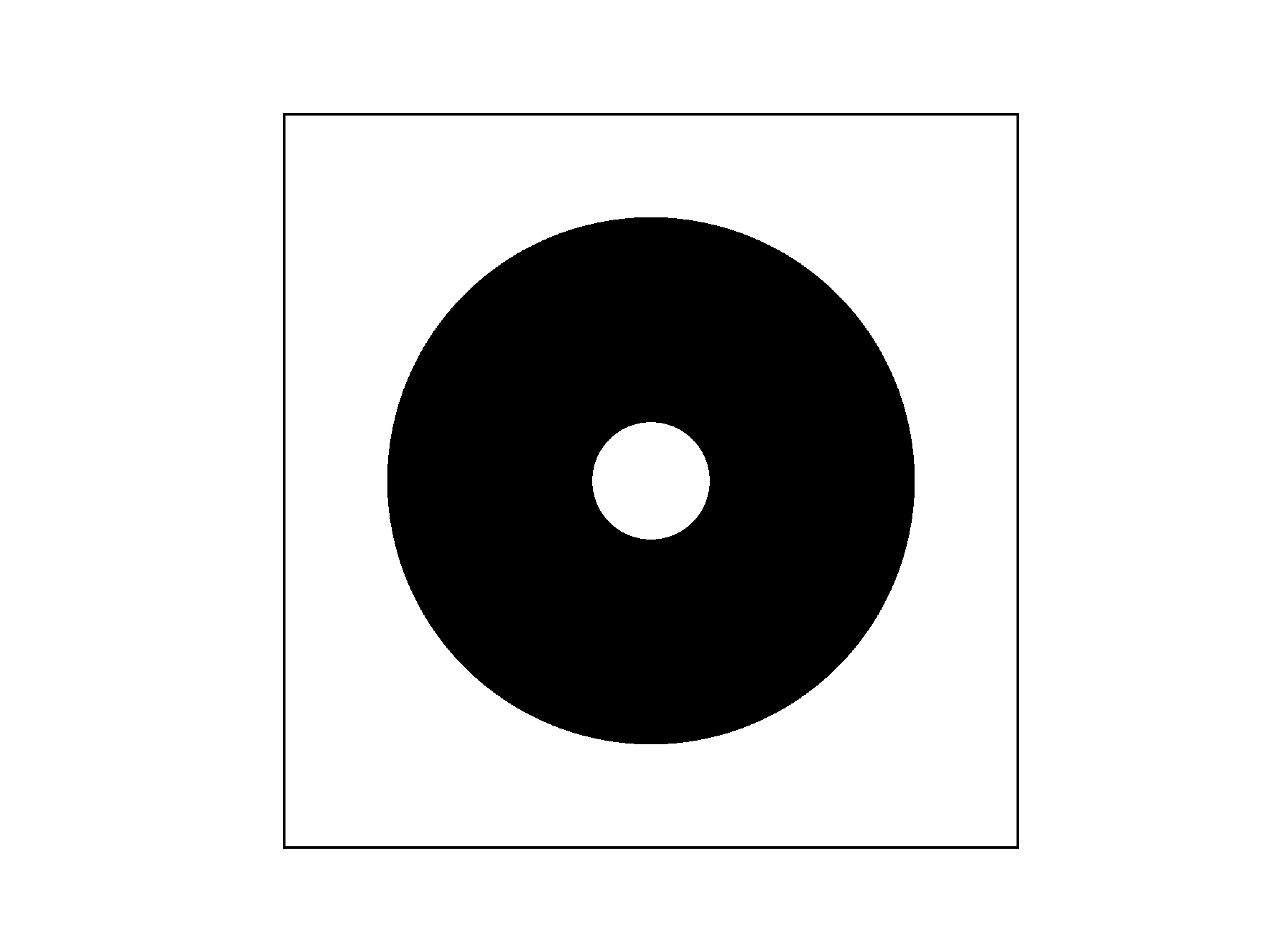}};
    \node[inner sep=0pt] (whitehead) at (6,0)
    {\includegraphics[width=.15\textwidth]{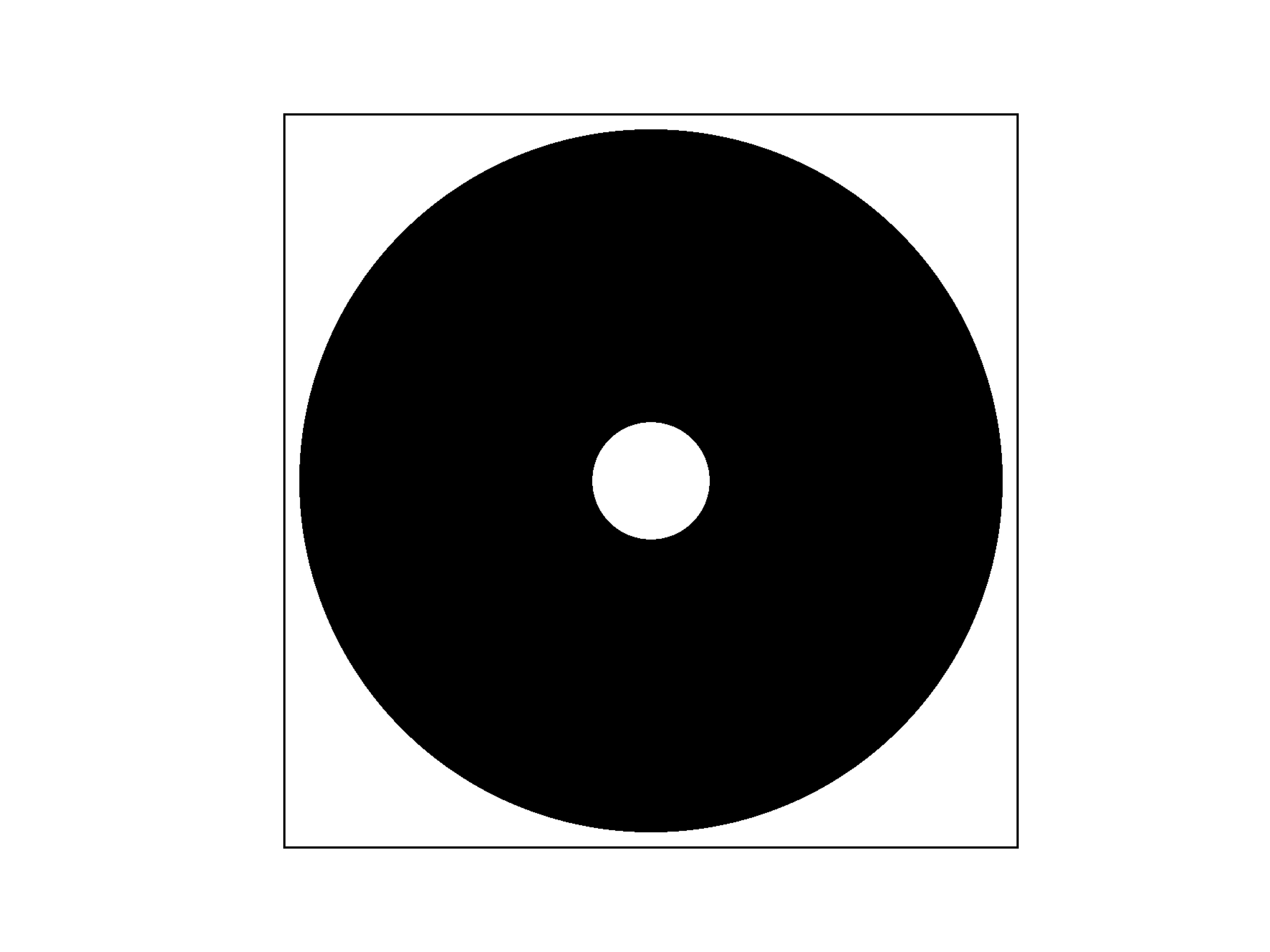}};
    \node[inner sep=0pt] (whitehead) at (6,6)
    {\includegraphics[width=.15\textwidth]{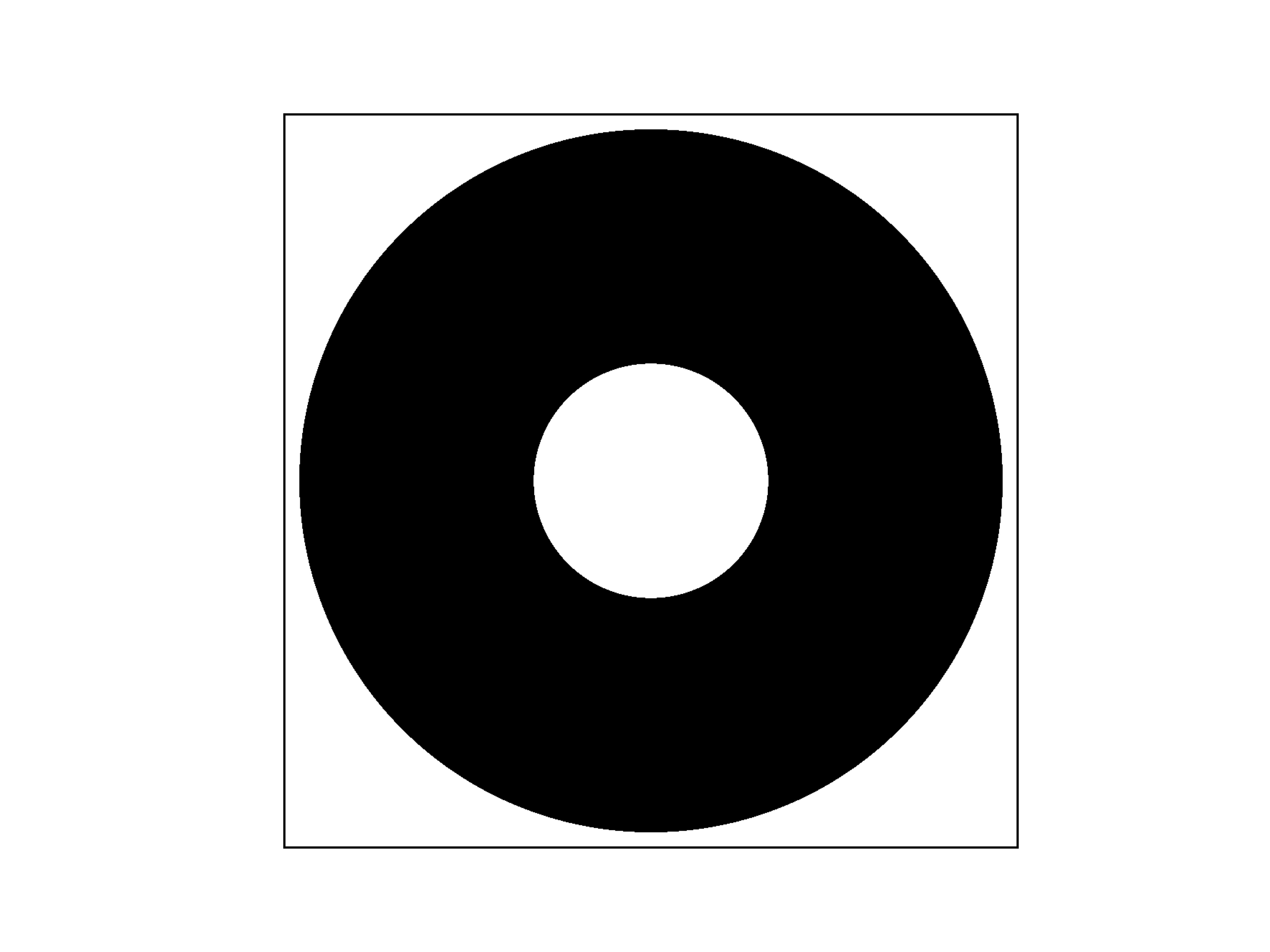}};
    \node[inner sep=0pt] (whitehead) at (0,6)
    {\includegraphics[width=.15\textwidth]{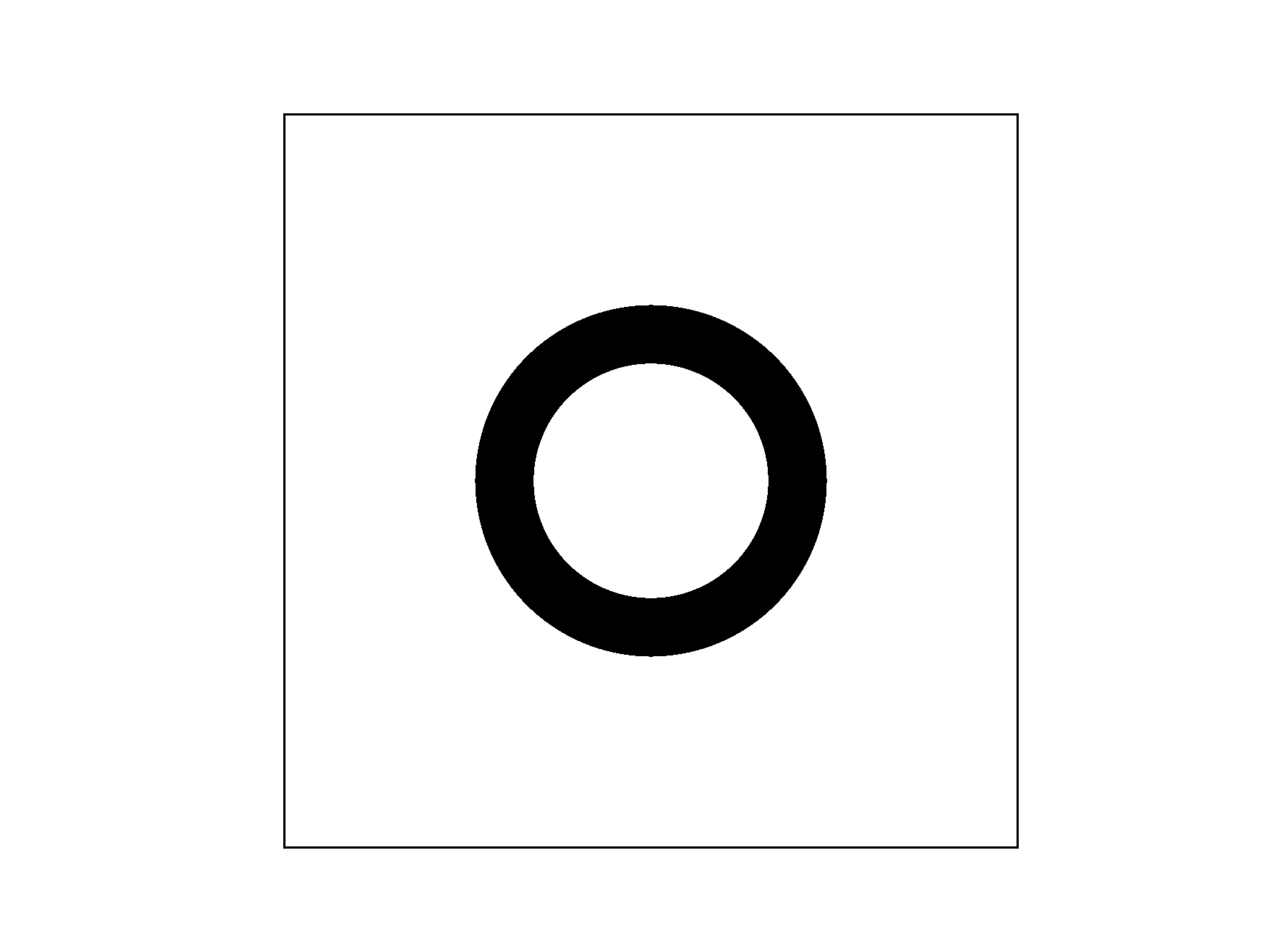}};
 \node[inner sep=0pt] (whitehead) at (3,6)
    {\includegraphics[width=.15\textwidth]{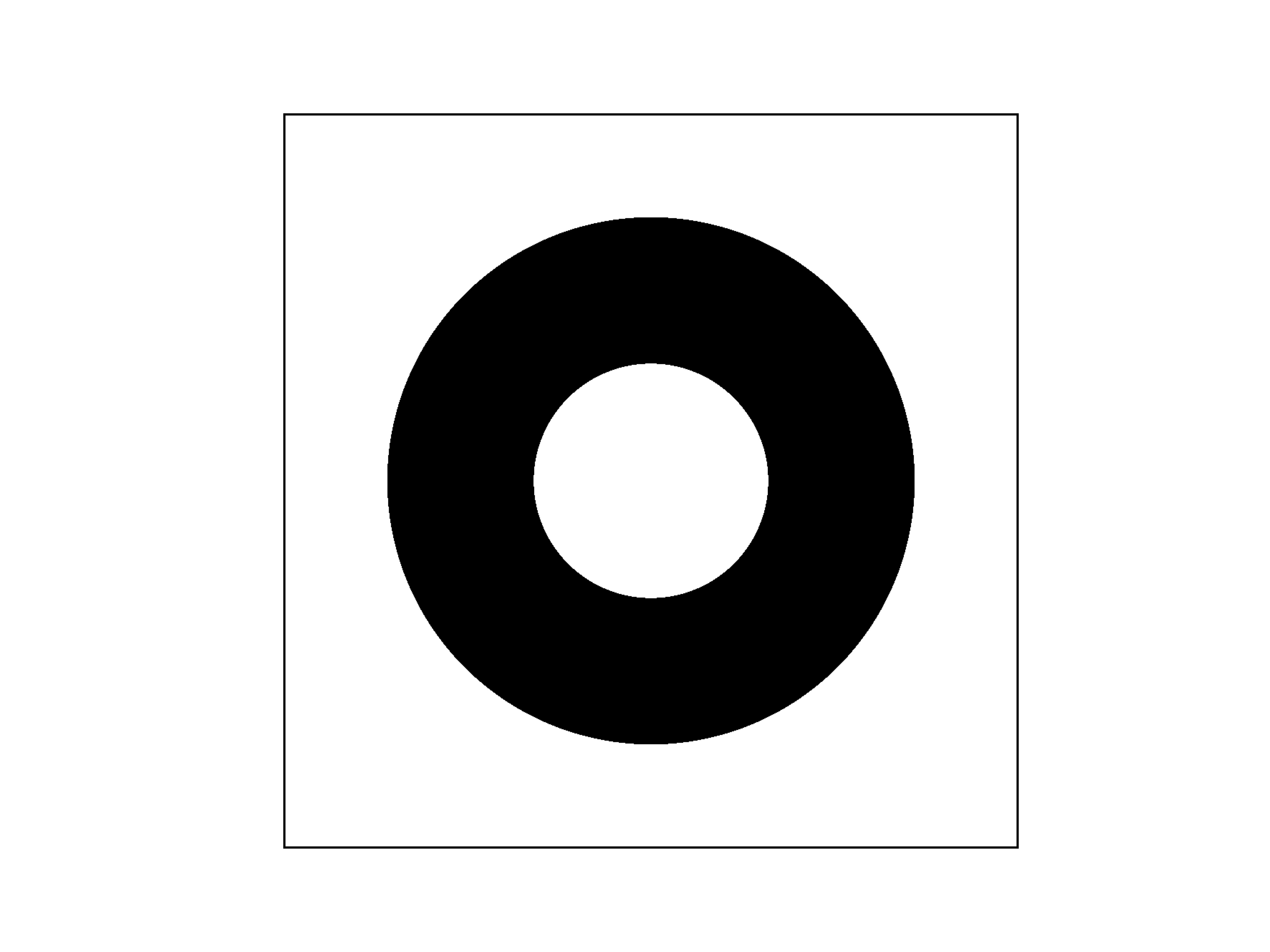}};
     \node[inner sep=0pt] (whitehead) at (6,3)
    {\includegraphics[width=.15\textwidth]{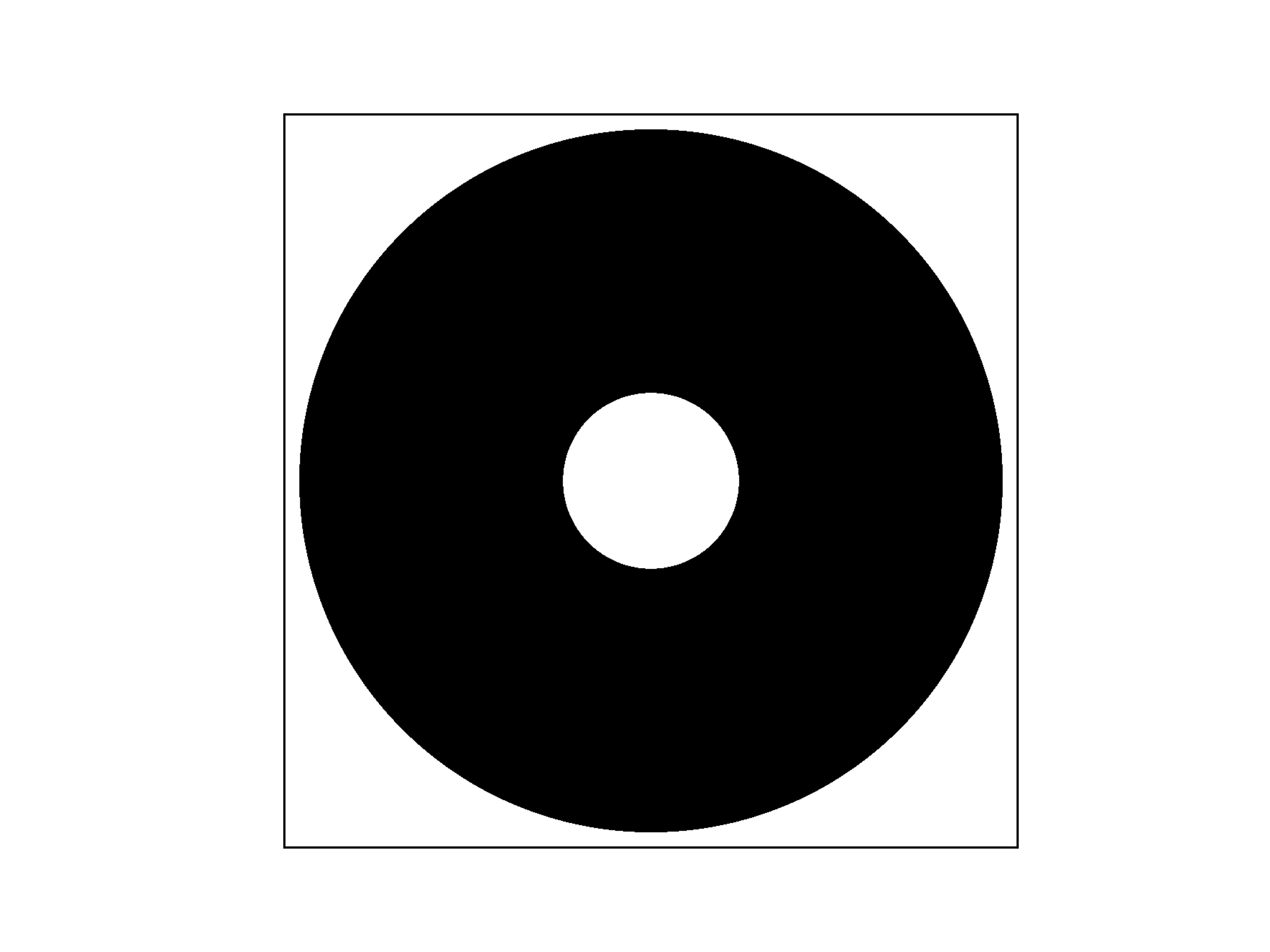}};
     \node[inner sep=0pt] (whitehead) at (0,3)
    {\includegraphics[width=.15\textwidth]{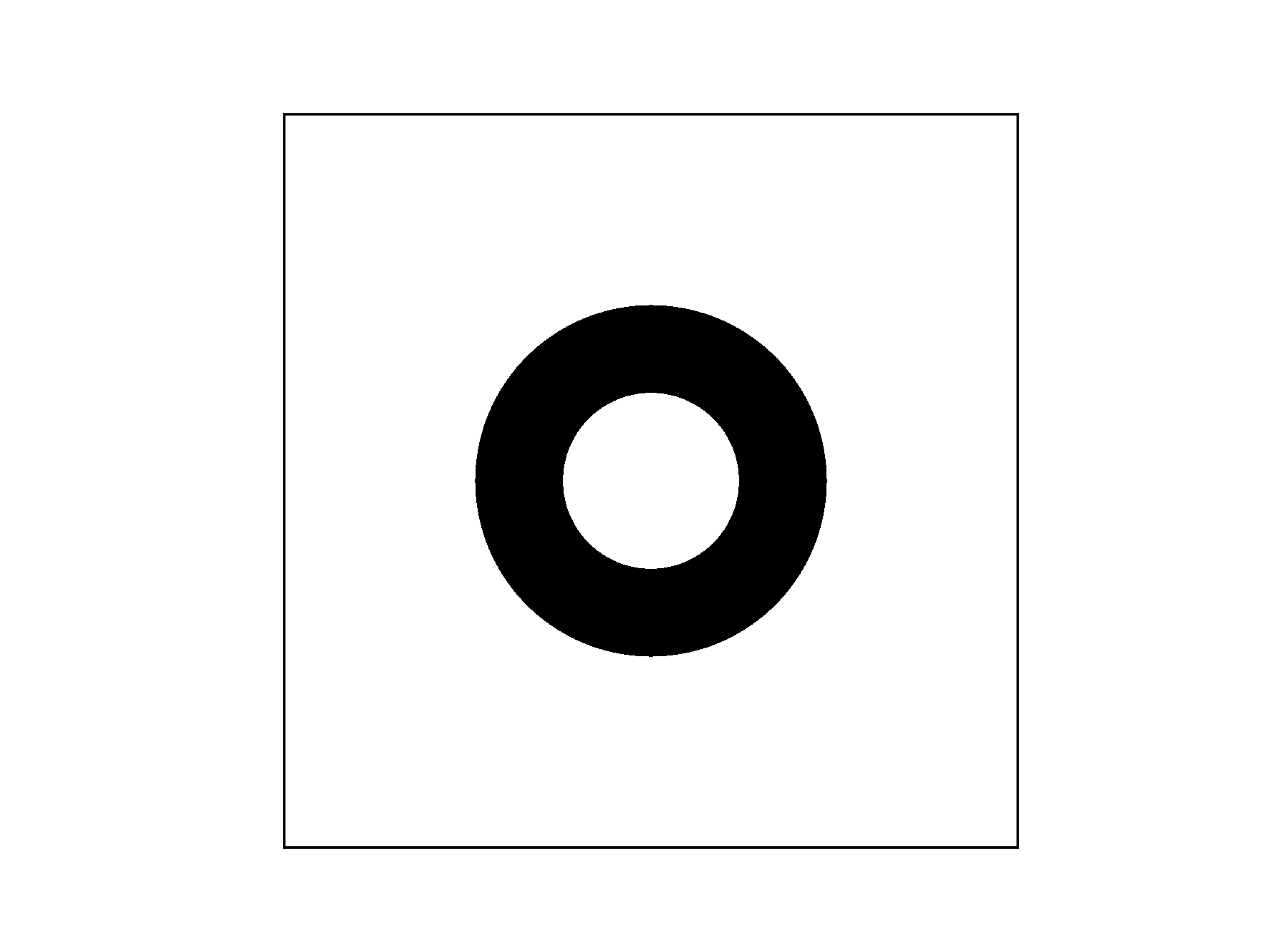}};
    \node[] at (0,1.5) {$\vdots$};
    \node[] at (0,4.5) {$\vdots$};
    \node[] at (6,1.5) {$\vdots$};
    \node[] at (6,4.5) {$\vdots$};
     \node[] at (1.5,0) {$\cdots$};
    \node[] at (4.5,0) {$\cdots$};
    \node[] at (1.5,6) {$\cdots$};
    \node[] at (4.5,6) {$\cdots$};
    \draw[->] (-1.5,-1.5) --(7.5,-1.5);
    \draw[->] (-1.5,-1.5) --(-1.5,7.5);
    \draw[-] (-1.7,3)--(-1.3,3);
    \node[left] at  (-1.7,3) {$1.5$};
    \draw[-] (-1.7,6)--(-1.3,6);
    \node[left] at  (-1.7,6) {$2$};
    \draw[-] (-1.7,0)--(-1.3,0);
    \node[left] at  (-1.7,0) {$1$};
    \draw[-] (3,-1.7)--(3,-1.3);
    \node[below] at  (3,-1.7) {$4.5$};
    \draw[-] (6,-1.7)--(6,-1.3);
    \node[below] at  (6,-1.7) {$6$};
    \draw[-] (0,-1.7)--(0,-1.3);
    \node[below] at  (0,-1.7) {$3$};
    \node[] at (3,-3.3){outer radius };
    \node[above, rotate=90] at (-3,3){inner radius };
\end{tikzpicture}
    \caption{The data consist $400$ annuli with different outer and inner radii.}
    \label{fig:ex2_obj}
\end{figure}

\begin{figure}
    \centering
    \begin{subfigure}[t]{0.49\textwidth}
             \begin{tikzpicture}[scale=0.7]
            \begin{axis}[
                 name=full,
                  xlabel={detector point},
                    ylabel={attenuation},
                ]
                \addplot[
                  only marks,
                  mark=*,
                  mark size=0.1pt,
                  black,
                    const plot,
                ]
                table {datafiles/radon_signal.dat};
                 \end{axis}

        \end{tikzpicture}
        \caption{Radon transform of an annulus.}
    \end{subfigure}
    \begin{subfigure}[t]{0.49\textwidth}
    \begin{tikzpicture}[scale=0.7]
            \begin{axis}[
                 name=full,
                  xlabel={detector point},
                    ylabel={attenuation},
                ]
                \addplot[
                  only marks,
                  mark=*,
                  mark size=0.1pt,
                  black,
                    const plot,
                ]
                table {datafiles/noisy_radon_signal.dat};
                 \end{axis}
        \end{tikzpicture}
        \caption{Noisy Radon transform of an annulus.}
    \end{subfigure}
    \caption{Example of Radon transform of an annulus belonging to $Y_{clean}$ and its noisy version in $Y_{noisy}$. }
    \label{fig:ex2_radon}
\end{figure}

\begin{figure}[!htb]
\centering
     \begin{subfigure}[b]{0.49\textwidth}
         \centering
                 \begin{tikzpicture}[scale=0.15]
                \draw[step=5, gray!30, very thin] (0,0) grid (30,30);
                 \node[below] at (15,-2.5) {\small{birth}}; 
              \node[above,rotate=90] at (-6,15) {\small{death}}; 
                \foreach \x in {0,100,200,300}
                    \draw ({\x/10},0) -- ({\x/10},-0.03) node[below] {\tiny{\x}};
                \foreach \y in {0,100,200,300}
                    \draw (0,{\y/10}) -- (-0.02,{\y/10}) node[left] {\tiny{\y}};
                \draw[->](0,0)--(0,33); 
                \draw[->](0,0)--(33,0); 
                \draw[-](0,0)--(30,30); 
              \pgfplotstableread[col sep=space]{datafiles/image_dgm.dat}\datatable
              \foreach \i in {0,...,88} {
                \pgfplotstablegetelem{\i}{[index]0}\of\datatable
                \let\x\pgfplotsretval
                \pgfplotstablegetelem{\i}{[index]1}\of\datatable
                \let\y\pgfplotsretval
                    \node[draw, fill=green, minimum size=1.5mm, inner sep=0pt, shape=diamond] at ({\x/10},\y/10){};
              }
             \node[above] at (15, 30) {$\dgm_1(X_{cont.})$}; 
            
        \end{tikzpicture}
         \caption{}
         \label{subfig:pershom_diagram_annulus_data}
        \end{subfigure}
              \begin{subfigure}[b]{0.49\textwidth}
         \centering
        \begin{tikzpicture}[scale=0.12]
                \draw[step=5, gray!30, very thin] (0,0) grid (40,40);
                 \node[below] at (20,-2.5) {\small{birth}}; 
              \node[above,rotate=90] at (-6,20) {\small{death}}; 
                \foreach \x in {0,1000,2000,3000,4000}
                    \draw ({\x/100},0) -- ({\x/100},-0.03) node[below] {\tiny{\x}};
                \foreach \y in {0,1000,2000,3000,4000}
                    \draw (0,{\y/100}) -- (-0.02,{\y/100}) node[left] {\tiny{\y}};
                \draw[->](0,0)--(0,43); 
                \draw[->](0,0)--(43,0); 
                \draw[-](0,0)--(40,40); 
              \pgfplotstableread[col sep=space]{datafiles/radon_dgm.dat}\datatable
              \foreach \i in {0,...,59} {
                \pgfplotstablegetelem{\i}{[index]0}\of\datatable
                \let\x\pgfplotsretval
                \pgfplotstablegetelem{\i}{[index]1}\of\datatable
                \let\y\pgfplotsretval
                    \node[draw, fill=blue, minimum size=1.2mm, inner sep=0pt, shape=circle] at ({\x/100},\y/100){};
              }
             \node[above] at (20, 40) {$\dgm_1(Y_{clean})$}; 
            
        \end{tikzpicture}
         \caption{  }
         \label{subfig:pershom_diagram_clean_radon}
     \end{subfigure}
    \hfill
     \begin{subfigure}[b]{0.49\textwidth}
         \centering
        \begin{tikzpicture}[scale=0.12]
                \draw[step=5, gray!30, very thin] (0,0) grid (40,40);
                \node[below] at (20,-2.5) {\small{birth}}; 
              \node[above,rotate=90] at (-6,20) {\small{death}}; 
                \foreach \x in {0,1000,2000,3000,4000}
                    \draw ({\x/100},0) -- ({\x/100},-0.03) node[below] {\tiny{\x}};
                \foreach \y in {0,1000,2000,3000,4000}
                    \draw (0,{\y/100}) -- (-0.02,{\y/100}) node[left] {\tiny{\y}};
                \draw[->](0,0)--(0,43); 
                \draw[->](0,0)--(43,0); 
                \draw[-](0,0)--(40,40); 
              \pgfplotstableread[col sep=space]{datafiles/radon_noise_dgm.dat}\datatable
              \foreach \i in {0,...,62} {
                \pgfplotstablegetelem{\i}{[index]0}\of\datatable
                \let\x\pgfplotsretval
                \pgfplotstablegetelem{\i}{[index]1}\of\datatable
                \let\y\pgfplotsretval
                    \node[draw, fill=red, minimum size=1mm, inner sep=0pt, shape=rectangle] at ({\x/100},\y/100){};
              }
             \node[above] at (20, 40) {$\dgm_1(Y_{noisy})$}; 
        \end{tikzpicture}
         \caption{}
         \label{subfig:pershom_diagram_noisy_diagrams}
     \end{subfigure}
        \caption{The simplified persistence diagrams of dimension one using Euclidean distance. Here $X_{cont.}$ are set of discretized annuli, $Y_{clean}$ is the discretized Radon transforms of these annuli belonging to $X_{cont.}$. The set $Y_{noisy}$ is the noisy Radon transforms, where the relative error between noisy and clean signals are $3-5\%$. In each of the three diagrams, there is exactly one off-diagonal point.}
        \label{fig:radon_example_diagrams}
\end{figure}

The discretization of the cause data, the functions $f_t$, was done in the following way.
First, we took $400$ evenly spaced values in the interval $[-2,2)$. Let denoted these values by $t_i,$ $i=1,\dots,400$, $t_i<t_j, $ when $i<j$. We created a $1024\times 1024$ meshgrid over $[-6.25,6.25]\times[-6.25,6.25]$. Now each $f_{t_i}$ is evaluated at each grid point resulting a binary matrix $\mathsf{M}_{t_i}\in\{0,1\}^{1024\times 1024}$. Some of these binary matrices are presented in Figure \ref{fig:ex2_obj}. Denote that $X_{cont.}:=\{\mathsf{M}_{t_i}\}$. Denote that $X_{vec.}$ is the set whose elements are vectorizations of matrices in $X_{cont.}$.

Furthermore we take (discrete) Radon transform $\mathsf{R}$ of each $\mathsf{M}_{t_i}$; let us denote this set by $Y_{clean}:=\{\mathsf{R}(\mathsf{M}_{t_i})\mid \mathsf{M}_{t_i}\in X_{cont.}\}$. Furthermore, we added some Gaussian noise to elements in $Y_{clean}$ such that the relative error is $3-5\%$. Let denote the noisy data by $Y_{noisy}$. Example of clean and noisy Radon transforms are presented in Figure \ref{fig:ex2_radon}. The indexed persistent diagrams of dimension one for $X_{cont.}$ (using $X_{vec.}$), $Y_{clean}$ and $Y_{noisy}$ are computed using Euclidean distance. The simplified diagrams are presented in Figure \ref{fig:radon_example_diagrams}. The indexed persistence diagram $\Dgm_1(X_{cont.})$ contains 89 diagram points, and one point is clearly off the diagonal in the simplified persistence diagram. The indexed persistence diagram $\Dgm_1(Y_{clean})$ contains 60 points, and the indexed persistence diagram $\Dgm_1(Y_{noisy})$, contains 63 points. Also in the simplified persistence diagrams $\dgm_1(Y_{clean})$ and $\dgm_1(Y_{noisy})$, there are clear off-diagonal points.

\section{Conclusion}\label{sec:conclusion}
The study demonstrated a way to observe structures, including the periodic behavior of a set of causes from noisy measurements. We showed that a persistence diagram can be recovered with error bounds.  
 The focus was on the inverse direction, finding the structure of the continuous model space $X_{cont.}$ from noisy measurements $Y_{noisy}$. Results can also be presented for forward direction, in such a case, one can study what kind of structures can be assumed to be retained (in a topological sense) during the measurement process. Two examples were given, the Radon transform and the conductivity problem, arising from inverse problems, fitting to this problem setting.  
Even though our motivation in this study lay in inverse problems, one could find the results of quasi-interleaving interesting as such.  
\section*{Acknowledgments}
E.K.  was partially supported  by 
the FAME Flagship of Advanced Mathematics for Sensing Imaging and Modelling or the research Council of Finland (grant 359182) and P.P by the Center of Excellence FiRST.
M.L. was partially supported by PDE-Inverse project of the European Research Council of the European Union, the FAME and Finnish Quantum flagships and the grant 336786 of the Research Council of Finland. Views and opinions expressed are those of the authors only and do not necessarily reflect those of the European Union or the other funding organizations. Neither the European Union nor the other funding organizations can be held responsible for them.

\newpage
\bibliographystyle{plain} 
\bibliography{refs} 
\end{document}